\documentclass[11pt,oneside,reqno]{amsart}
\usepackage[utf8x]{inputenc}

\usepackage{textcomp}
\usepackage{amsmath,amssymb}
\usepackage{mathrsfs}
\usepackage[T2A]{fontenc}
\usepackage{amsthm}
\usepackage{epigraph}
\usepackage{array}
\usepackage[linktocpage]{hyperref}
\usepackage{amsfonts,graphics,amsthm,amsfonts,amscd,latexsym}
\usepackage{epsfig}
\usepackage{flafter}
\usepackage{mathtools}
\usepackage{comment}
\usepackage{stmaryrd}
\usepackage{enumitem}
\usepackage{textcomp}
\usepackage{epigraph}
\usepackage{longtable}
\usepackage{tikz-cd}
\hypersetup{
colorlinks=true,
linkcolor=red,
citecolor=green,
filecolor=blue,
urlcolor=blue
}
\usepackage{tikz}
\usetikzlibrary{quotes,babel}

\usetikzlibrary{graphs,positioning,arrows,shapes.misc,decorations.pathmorphing}

\tikzset{
>=stealth,
every picture/.style={thick},
graphs/every graph/.style={empty nodes},
}

\tikzstyle{vertex}=[
draw,
circle,
fill=black,
inner sep=1pt,
minimum width=5pt,
]
\usepackage[position=top]{subfig}
\usepackage[alphabetic]{amsrefs}
\usepackage{amssymb}
\usepackage{color}

\calclayout

\usetikzlibrary{decorations.pathmorphing}
\tikzstyle{printersafe}=[decoration={snake,amplitude=0pt}]

\newcommand{\oo}{\mathcal{O}}

\newcommand{\pp}{\mathbb{P}}

\newcommand{\coreg}{\mathrm{coreg}\,}
\newcommand{\coregG}{\mathrm{coreg}_G\,}

\newcommand{\CG}{\ensuremath{\mathfrak{C}}}
\newcommand{\DG}{\ensuremath{\mathfrak{D}}}
\newcommand{\AG}{\ensuremath{\mathfrak{A}}}
\newcommand{\SG}{\ensuremath{\mathfrak{S}}}

\newcounter{theoremsec}[section]

\def\O#1.{\mathcal {O}_{#1}}
\def\pr #1.{\mathbb P^{#1}}
\def\af #1.{\mathbb A^{#1}}
\def\ses#1.#2.#3.{0\to #1\to #2\to #3 \to 0}
\def\xrar#1.{\xrightarrow{#1}}
\def\K#1.{K_{#1}}
\def\bA#1.{\mathbf{A}_{#1}}
\def\bM#1.{\mathbf{M}_{#1}}
\def\bL#1.{\mathbf{L}_{#1}}
\def\bB#1.{\mathbf{B}_{#1}}
\def\bK#1.{\mathbf{K}_{#1}}
\def\subs#1.{_{#1}}
\def\sups#1.{^{#1}}

\usepackage{tikz}
\usetikzlibrary{matrix,arrows,decorations.pathmorphing}

\newtheorem{theorem}{Theorem}[section]

\newtheorem{lemma}[theorem]{Lemma}

\newtheorem{proposition}[theorem]{Proposition}

\newtheorem{corollary}[theorem]{Corollary}

\theoremstyle{definition}
\newtheorem{definition}[theorem]{Definition}

\newtheorem{example}[theorem]{Example}

\newtheorem{notation}[theorem]{Notation}
\newtheorem{correspondence}[theorem]{Canonical correspondence}

\newtheorem{problem}[theorem]{Problem}
\newtheorem{question}[theorem]{Question}

\newtheorem{remark}[theorem]{Remark}

\theoremstyle{remark}

\numberwithin{equation}{section}

\usepackage[all]{xy}
\newcounter{rownumber}[figure]
\newcounter{rownumber-irr}[figure]
\newcounter{rownumber-p1}[figure]
\begin{document}

\title{On the coregularity of del Pezzo surfaces with du Val singularities}

\author[K.~Loginov]{Konstantin Loginov}

\author[A.~Trepalin]{Andrey Trepalin}

\address{\emph{Konstantin Loginov}
\newline
\textnormal{Steklov Mathematical Institute of Russian Academy of Sciences, Moscow, Russia.}
\newline
\textnormal{Laboratory of AGHA, Moscow Institute of Physics and Technology.}
\newline
\textnormal{\texttt{loginov@mi-ras.ru}}}

\thanks{2010
	    \emph{Mathematics Subject Classification}: 14J45.
	    \newline
	    \indent
		\emph{Keywords}: Fano varieties, coregularity, complements, dual complex, elliptic fibration.
        \newline
		}
		
\address{\emph{Andrey Trepalin}
\newline
\textnormal{Steklov Mathematical Institute of Russian Academy of Sciences, Moscow, Russia.}
\newline
\textnormal{\texttt{trepalin@mccme.ru}}}

\setcounter{tocdepth}{1}

\begin{abstract}
We compute the coregularity of del Pezzo surfaces with du Val singularities. To this aim, we study the relation between del Pezzo surfaces of degree $1$ and elliptic fibrations. 
It~turns out that del Pezzo surfaces with positive coregularity correspond to isotrivial elliptic fibrations with some special properties. 
We also prove results about coregularity of del Pezzo surfaces over non-algebraically closed fields of characteristic $0$. Our results confirm the expectation that ``most'' del Pezzo surfaces have coregularity $0$, while del Pezzo surfaces with positive coregularity enjoy some special properties.  
\end{abstract}

\maketitle
\tableofcontents

\section{Introduction}

We work over an algebraically closed field $\mathbb{K}$ of characteristic $0$ unless stated otherwise.
A normal projective variety $X$ is called a Fano variety if its anti-canonical class $-K_X$ is an ample $\mathbb{Q}$-Cartier divisor. 
A natural way to study Fano varieties is by looking at its (pluri-)anti-canonical linear system $|-lK_X|$ for $l\geqslant 1$. In dimension not greater than $3$ the existence of a smooth element in the linear system $|-K_X|$ was established by V. Shokurov \cite{Sh80}, which led to an inductive approach to the classification of smooth Fano threefolds. 
It is also interesting to look at singular elements in~$|-lK_X|$. 
It turns out that the singularities of such elements are related to the variation of smooth elements of these linear systems in terms of their moduli. 

To measure ``how singular'' the elements of $|-K_X|$ could be, the following invariants were introduced. The \emph{dual complex} of a reduced simple normal crossing divisor $D$ on a smooth variety is a~topological space that measures the combinatorial complexity of this divisor. Using the resolution of singularities, this definition can be generalized to the case of log canonical pairs.

The notion of \emph{regularity} $\mathrm{reg}(X)$ of a projective variety $X$ was introduced to V. Shokurov in \cite{Sh00}. By definition, it is the maximum of dimensions of dual complexes of log canonical complements on a given Fano variety, see Definition \ref{defin-regularity}. One can say that the regularity measures how far a given Fano variety is from being \emph{exceptional}. The latter means that any log canonical complement is in fact Kawamata log terminal. 
The dual notion of \emph{coregularity} 
\[
\mathrm{coreg}(X)=\dim X-\mathrm{reg}(X)-1, 
\]
was defined in \cite{Mo24}. 
The coregularity of a variety of dimension $n$ is an integer in the set $\{0,\dots,n\}$. To give an example, the sum of all torus invariant divisors on a toric variety gives a $1$-complement which shows that the coregularity of a toric variety is equal to $0$.

The study of regularity of Fano varieties drew a lot of attention recently, see \cite{Mo24}, \cite{FFMP25}, \cite{LPT25} and references therein. 
In \cite{ALP24} the coregularity of smooth $3$-dimensional Fano varieties was computed. 
It turns out that ``most'' smooth Fano threefolds have coregularity $0$. 
From~\cite{Zha25} it~follows that in any deformation family of smooth $3$-dimensional Fano varieties there exists an~element of coregularity $0$. In the case of del Pezzo surfaces, there is the following simple result.


\begin{proposition}[{\cite[Proposition 2.3]{ALP24}}]
\label{prop-smooth-dP}
Let $\mathbb{K}$ be an algebraically closed field of characteristic~$0$. 
Let $X$ be a smooth del Pezzo surface of degree $d=(-K_X)^2$ defined over $\mathbb{K}$.
If $d\geqslant 2$ then $X$ has coregularity $0$.
A general smooth del Pezzo surface with $d=1$ has coregularity $0$. For a special smooth del Pezzo surface $X$ of degree $1$, one has $\mathrm{coreg}(X) = 1$. 
\end{proposition}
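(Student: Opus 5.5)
The plan is to prove that $\mathrm{coreg}(X)=0$ by exhibiting an explicit log canonical complement $D\in|-K_X|$ (or in $|-lK_X|$, divided by $l$) whose dual complex is one-dimensional. Since coregularity is at least $0$, this suffices. On a surface such a complement is, for instance, a reduced divisor with snc support in $|-K_X|$ containing a cycle of smooth rational curves. It can also be a nodal rational curve in $|-K_X|$: blowing up the node gives an lc pair whose dual complex is a circle, so $\mathrm{reg}=1$ and $\mathrm{coreg}=0$.

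\textbf{Case $d\geqslant 2$.} I would argue by choosing a suitable member of $|-K_X|$ in each degree.
\begin{itemize}
\item For $d\geqslant 3$, $X$ is (for $d\geqslant 3$, away from $\mathbb{P}^1\times\mathbb{P}^1$) a blow-up of $\mathbb{P}^2$ in $9-d\leqslant 6$ points in general position. I take a triangle of lines, or conics and lines, through the blown-up points, arranged so that its strict transform is an anticanonical cycle of rational curves. For instance, for $d=3$, three coplanar lines on the cubic surface form a triangle in $|-K_X|$. The hyperplane sections through a line of the cubic that split into three lines meeting pairwise at distinct points are exactly what is needed (avoiding Eckardt points). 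Alternatively, the tangent plane section at a general point gives a nodal cubic curve.
\item For $d=2$, the surface $X$ is a double cover of $\mathbb{P}^2$ branched in a smooth quartic $B$. The preimage of a bitangent line splits into two $(-1)$-curves meeting transversally at two points, and these form a cycle of length two in $|-K_X|$.
\item In general, the preimage of a general tangent line to $B$ is a nodal rational curve in $|-K_X|$.
\end{itemize}
In each case the resulting pair $(X,D)$ is lc but not klt, with circle dual complex.

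\textbf{Case $d=1$.} The pencil $|-K_X|$ has a single base point, and blowing it up gives a rational elliptic surface $Y\to\mathbb{P}^1$. Members of $|-K_X|$ are the fibres, which are irreducible here. An lc complement $D\in|-lK_X|$ with $D/l$ not klt must have an lc centre. The dual complex is $1$-dimensional exactly when some complement has a node-like (cycle) structure. For the general surface, the elliptic fibration has $12$ nodal fibres of type $I_1$; a nodal fibre not through the base point gives $\mathrm{coreg}=0$. Conversely, if every singular fibre is cuspidal (type II, e.g.\ $X$ is a degree~$6$ hypersurface $w^2=z^3+f_6(x,y)$ with $j\equiv 0$, the isotrivial case), then the anticanonical members are smooth elliptic or cuspidal curves. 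Then I would show:
\begin{itemize}
\item every $\mathbb{Q}$-complement has dual complex a point;
\item $\mathrm{coreg}=1$, attained by the lc complement given by a smooth elliptic curve in $|-K_X|$ (dual complex a point, so $\mathrm{reg}=0$).
\end{itemize}
Thus coregularity is $1$, and is never $2$ because $|-K_X|$ always contains an lc member.

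\textbf{Main obstacle.} The hard step is the upper bound in the special case: excluding all $l$-complements $D\in|-lK_X|$ for every $l$, not just $l=1$, whose dual complex is $1$-dimensional. I would try to reduce to $l=1$ as follows.
\begin{itemize}
\item An lc centre of dimension $0$ with two-branch structure, or a curve in $\lfloor D/l\rfloor$, would force the curve $C$ in the boundary to satisfy $-K_X\cdot C\leqslant 1$.
\item Since $-K_X$ is ample of degree $1$, the divisor $D/l-C$ must be a small effective divisor.
\item Using that the Picard lattice and the anticanonical degrees are tight in degree $1$, one shows $C\in|-K_X|$ is a fibre with a node, contradicting the assumption.
\end{itemize}
A more conceptual alternative is to pass to $Y$ and use the canonical bundle formula on the elliptic fibration: a complement of coregularity $0$ would induce a log canonical structure on the base with specific lc centres, forcing a fibre of multiplicative type ($I_n$). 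For the precise reduction I would rely on that argument.
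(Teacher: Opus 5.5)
\textbf{Verdict: genuine gap.} Your treatment of $d\geqslant 2$ is fine. For $d=2$ the preimage of a general tangent line to the branch quartic is exactly the argument of Lemma~\ref{lem-dp2}, and for $d\geqslant 3$ you could simply blow up general points down to degree $2$ and push the complement forward by Lemma~\ref{lem-coregpush}. The general degree-$1$ case is also fine: a nodal member of $|-K_X|$ gives $\mathrm{coreg}=0$, and a smooth member gives $\mathrm{coreg}\leqslant 1$.

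The gap is the step you flag yourself: proving $\mathrm{reg}_l(X)\leqslant 0$ for \emph{every} $l$ when $|-K_X|$ has no nodal member. Your reduction only covers complements $B=\frac1l D$ in which some curve $C$ has coefficient $1$. There, $-K_X\cdot(B-C)=0$ and ampleness force $B=C\in|-K_X|$, which is smooth or cuspidal, so that case is settled.

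It does not cover complements whose coefficients are all $<1$ but which have a point lc centre carrying a chain of exceptional lc places. These occur on smooth surfaces. Take $B=\frac12(L_1+L_2+L_3+L_4)$ with $L_1=\{y=0\}$, $L_2=\{y=x^2\}$, $L_3=\{x=0\}$, $L_4=\{x=y^2\}$.
\begin{itemize}
\item The blow-up $E_1$ of the origin has log discrepancy $2-\mathrm{mult}_0B=0$.
\item Blow up the two points where $L_1,L_2$ (resp.\ $L_3,L_4$) meet $E_1$. At each point three pairwise transversal curves with coefficients $1,\frac12,\frac12$ meet, so the new curves $E_2,E_2'$ also have log discrepancy $0$.
\item The pair is lc, and its dual complex is the chain $E_2,E_1,E_2'$, a segment.
\end{itemize}
Nothing in your sketch rules out a global $\mathbb{Q}$-complement of this type on $X$.

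The canonical bundle formula alternative does not close the gap either. The crepant transform of $B$ on $Y=X'$ is $f^{-1}_*B+(\mathrm{mult}_O B-1)E$. Unless $B$ is a combination of members of $|-K_X|$, this has a horizontal part and a negative coefficient on the section $E$. So lc centres can be points on horizontal components inside smooth or cuspidal fibres, and nothing forces a fibre of type $I_n$.

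The missing ingredient is Theorem~\ref{thm-1-2-complement}: if $\mathrm{coreg}(X)=0$, it is already attained by a $1$- or a $2$-complement, so only $l=1,2$ need checking. Note that $l=1$ alone does not suffice in general: the surfaces $X'_1(D_5+A_1)$ and $X'_1(D_6)$ of Theorem~\ref{cor-toric-models} have $\mathrm{coreg}_1=1$ but $\mathrm{coreg}_2=0$.

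For $l=2$, use the model of the special surface. Smooth with no nodal member forces fibres $6II$, hence isotrivial with $j=0$. So $X$ is $w^2+z^3+f_6(x,y)=0$ in $\mathbb{P}(1,1,2,3)$ with $f_6$ having distinct roots, and the branch curve $R=\{z^3+f_6=0\}$ is irreducible. A member $D\in|-2K_X|$ is one of two kinds.
\begin{itemize}
\item $D=D_1+D_2$ with $D_i\in|-K_X|$. Then $\frac12 D$ is klt if $D_1\neq D_2$, since distinct members meet transversally at $O$ and a cusp has lct $\frac56$. Otherwise $\frac12 D=D_1$, which is either a smooth elliptic member or not lc.
\item $D=\{z+g_2(x,y)=0\}$. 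This is the curve $w^2=g_6(x,y)$ with $g_6\neq 0$ by irreducibility of $R$. Locally it is $w^2=x^a$, whose lct is $\frac12+\frac1a>\frac12$, so $(X,\frac12 D)$ is klt.
\end{itemize}
Hence $\mathrm{reg}_1(X),\mathrm{reg}_2(X)\leqslant 0$, and $\mathrm{coreg}(X)=1$.

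The paper only quotes this statement from the literature. However, this is exactly how its own degree-$1$ argument (Propositions~\ref{prop-isotrivial} and~\ref{prop-irreducible-coreg}, with Lemma~\ref{lem-irreducible-j0}) covers the smooth case.
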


The word special in Proposition \ref{prop-smooth-dP} means that the linear system $|-K_X|$ does not contain a~nodal curve, so that all the singular elements in $|-K_X|$ are cuspidal curves (cf. Lemma \ref{lem-red-irred-fibers}). 
This condition can be rephrased in a different way which allows us to generalize Proposition \ref{prop-smooth-dP} to~the~case of mildly singular del Pezzo surfaces, that is, del Pezzo surfaces with du Val singularities. 

To any degree $1$ del Pezzo surface with at worst du Val singularities one can canonically associate a relatively minimal elliptic fibration $\phi_Y\colon Y\to \mathbb{P}^1$. To do this, one needs to consider the blow-up of the unique basepoint of the linear system $|-K_X|$ followed by the minimal resolution. There exists a~correspondence between singular elements in $|-K_X|$ and singular fibers of the elliptic fibration~$\phi_Y$, which we recall below. Using this correspondence Proposition \ref{prop-smooth-dP} can be reformulated as follows (recall that an elliptic fibration is called \emph{isotrivial} if all its smooth fibers are isomorphic):

\begin{proposition}
\label{prop-smooth-dp-reformulated}
Let $\mathbb{K}$ be an algebraically closed field of characteristic~$0$. 
Let $X$ be a smooth del Pezzo surface $X$ defined over $\mathbb{K}$. Then either $\coreg (X)=0$, or the following equivalent conditions hold: 
\begin{itemize}
\item 
$(-K_X)^2=1$ and the elliptic fibration $\phi_Y$ is isotrivial, 
\item 
all smooth elements in the linear system $|-K_X|$ are isomorphic,
\item
$\coreg (X)=1$.
\end{itemize}
\end{proposition}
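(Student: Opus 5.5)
We reduce to Proposition \ref{prop-smooth-dP}. If $(-K_X)^2\geqslant 2$, then $\coreg(X)=0$ and there is nothing to prove. So we assume $d=1$. The plan is to show that each of the three conditions is equivalent to the statement ``$|-K_X|$ contains no nodal curve''. By Proposition \ref{prop-smooth-dP}, this last statement is exactly the ``special'' case in which $\coreg(X)=1$, and otherwise $\coreg(X)=0$. The third bullet is then equivalent to speciality by that proposition.

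\textbf{Geometric setup.} The linear system $|-K_X|$ is a pencil with a single base point $p$, and every member passes through $p$ transversally. Blowing up $p$ gives $Y\to X$; since $X$ is smooth, $Y$ is already smooth, and $\phi_Y\colon Y\to\mathbb{P}^1$ is a relatively minimal elliptic fibration with a section (the exceptional curve). Its fibers are exactly the strict transforms of the members of $|-K_X|$, and these are isomorphic to the members themselves. Because $X$ is smooth, every member of $|-K_X|$ is irreducible and reduced (cf.\ Lemma \ref{lem-red-irred-fibers}). Hence a singular member is either a nodal rational curve, which is a fiber of Kodaira type $I_1$, or a cuspidal rational curve, which is a fiber of type $II$. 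So the first two bullets are equivalent: the smooth fibers of $\phi_Y$ are precisely the smooth members of $|-K_X|$.

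\textbf{Key step: isotriviality versus nodal fibers.} Consider the $j$-invariant as a rational map $j\colon\mathbb{P}^1\to\mathbb{P}^1$. Near a fiber of type $I_n$ we have $j\to\infty$, so if a nodal member exists then $j$ is nonconstant and $\phi_Y$ is not isotrivial. Conversely, if $\phi_Y$ is not isotrivial, then $j$ is a nonconstant morphism and hence surjective, so some point has $j=\infty$. Over that point the fiber must be of type $I_n$ or $I_n^*$. Irreducibility of the fibers rules out everything except $I_1$, which is a nodal curve. Therefore $\phi_Y$ is isotrivial if and only if $|-K_X|$ contains no nodal curve.

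\textbf{Main obstacle.} The step needing the most care is the converse direction just described, namely the surjectivity of $j$ and the identification of the fibers lying over $j=\infty$. To handle it I would invoke Kodaira's table for the local behaviour of $j$ at singular fibers. Alternatively, I would use the Euler characteristic count $e(Y)=12$: in the isotrivial case every singular fiber is of type $II$ with $e=2$, which forces exactly six cuspidal fibers. This matches $j\equiv 0$ and shows the configuration is consistent. Once this is established, the three conditions are equivalent to speciality, and the dichotomy follows from Proposition \ref{prop-smooth-dP}.
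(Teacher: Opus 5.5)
Your proof is correct and follows essentially the paper's route. The paper obtains this statement as a reformulation of Proposition~\ref{prop-smooth-dP} via the correspondence between singular members of $|-K_X|$ and singular fibers of $\phi_Y$ (Table~1), using, as in Lemma~\ref{lem-fiberscoreg} and Proposition~\ref{prop-isotrivial}, that $j$ has poles exactly at fibers of type $I_n$ and $I_n^*$, so that for $d=1$ isotriviality amounts to $|-K_X|$ having no nodal member; the converse direction rests on this Kodaira-table fact, since the Euler-characteristic count you offer as an alternative presupposes isotriviality and is only a consistency check.
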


Proposition \ref{prop-smooth-dp-reformulated} shows that the coregularity of a Fano variety $X$ could be related to the variation of smooth elements in $|-K_X|$.
Recall that for a del Pezzo surface $X$ with $(-K_X)^2\geqslant 3$ we~have that $-K_X$ is very ample and defines an embedding $X\subset \mathbb{P}^d$ where $d=(-K_X)^2$. 
Hence, from Proposition~\ref{prop-smooth-dp-reformulated} we see that the problem of computing coregularity is related to the study of~projective varieties such that all its smooth hyperplane sections are isomorphic. The latter is a~classical problem, see e.g. \cite{Beau90} in the case of hypersurfaces. 
Our first main result is as follows (we~denote by $\mathrm{Sing}(X)$ the set of all singular points of $X$):

\begin{theorem}
\label{thm-dp1-main}
Let $\mathbb{K}$ be an algebraically closed field of characteristic~$0$. 
Let $X$ be a del Pezzo surface with du Val singularities of degree $d=(-K_X)^2$ defined over $\mathbb{K}$.
If $d\geqslant 2$ then
$$
\mathrm{coreg}(X)=\coreg_1(X)=0.
$$
If $d=1$, denote the associated elliptic fibration by $\phi_Y\colon Y\to \mathbb{P}^1$. Then
one has $\mathrm{coreg}(X)=0$ if and only if one of the following holds: 
\begin{itemize}
\item
$\phi_Y$ is not isotrivial, 
\item
 $\phi_Y$ is isotrivial, and
\[
\mathrm{Sing}(X)\in \{2D_4, D_4+2A_1\}.
\]
\end{itemize}
Otherwise, $\mathrm{coreg}(X)=1$.
\end{theorem}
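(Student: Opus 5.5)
We treat the cases $d\geqslant 2$ and $d=1$ separately. In both, the tool is the correspondence between log canonical complements and singular members of $|-K_X|$ (and, for $d=1$, singular fibers of $\phi_Y$). Throughout we use that $\coreg(X)=0$ is equivalent to the existence of a complement $(X,B)$ whose dual complex has dimension $1$, i.e.\ contains a cycle. For surfaces this means an lc curve configuration in $|-K_X|$ (or in $|-lK_X|$ with rational coefficients) containing a circle of rational curves, or a nodal rational curve, through lc centers.

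\textbf{The case $d\geqslant 2$.} We aim for a $1$-complement $B\in|-K_X|$ such that $(X,B)$ is lc but not klt and has a $0$-dimensional lc center forming a cycle. We first pass to the minimal resolution $\widetilde X$, a weak del Pezzo surface of degree $d$, and realize it as a blow-up of $\mathbb{P}^2$ (or of $\mathbb{P}^1\times\mathbb{P}^1$ / $\mathbb{F}_2$) in points in almost general position. We then take $B$ to be the image of a triangle of lines, or of a conic plus a line, on $\mathbb{P}^2$ passing through the blown-up points with suitable multiplicities. Concretely, the goal is a member $\widetilde B\in|-K_{\widetilde X}|$ whose support is a cycle of smooth rational curves together with $(-2)$-curves that enter with coefficients making $K_{\widetilde X}+\widetilde B$ crepant over $X$. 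Since $d\geqslant 2$, we have $h^0(-K_X)=d+1\geqslant 3$. Imposing a node at a general point of a chosen irreducible member costs $3$ conditions, so we can find a nodal or reducible-cycle member of $|-K_X|$ passing through the singular points appropriately. We verify the lc condition at du Val points by checking locally that the curve is a generic hyperplane-type section through an $A_n$ point, which is lc, or a toric boundary. The only awkward cases are those where every nodal member is forced through a $D$ or $E$ point non-transversally. We handle these by choosing the node elsewhere, which the dimension count allows.

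\textbf{The case $d=1$.} Here $|-K_X|$ is a pencil, and any $1$-complement is a member $C\in|-K_X|$. By Lemma \ref{lem-red-irred-fibers}-type arguments, the pair $(X,C)$ is lc and not klt with a $1$-dimensional dual complex exactly when the corresponding fiber of $\phi_Y$, after accounting for the exceptional $(-2)$-curves over $\mathrm{Sing}(X)$ lying in it, is of multiplicative type $I_n$ ($n\geqslant 1$). Fibers of additive type ($II, III, IV, I_n^*$, etc.) give dual complexes that are points or are not lc. For higher $l$-complements we use the fact that an lc pair $(X,B)$ with $B\sim_{\mathbb{Q}}-K_X$ and a cycle would, after pulling back to $Y$, produce a fiber of multiplicative type. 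The key input is that $\phi_Y$ is non-isotrivial if and only if the $j$-invariant is nonconstant, which holds if and only if $j$ has a pole, i.e.\ an $I_n$ fiber exists. This gives $\coreg(X)=0$ in the non-isotrivial case.

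In the isotrivial case all singular fibers are additive, and the coregularity can still drop to $0$ only through a complement with non-reduced boundary. We check that an $I_0^*$ fiber (the $D_4$ configuration) supports a $2$-complement $B=\frac12(\text{sum})$ whose dual complex is a cycle precisely when two such $D_4$-type fibers, or a $D_4$ together with $2A_1$ lying on a single $I_0^*$ fiber, occur. This uses the Oguiso--Shioda list of Mordell--Weil lattices and fiber configurations for rational elliptic surfaces with constant $j$. We expect this step to be the main obstacle: classifying, for isotrivial rational elliptic surfaces (with $j=0$, $1728$, or general constant $j$ and fibers only of types $I_0^*$ or $II,III,IV$ and their starred versions), which configurations of $(-2)$-curves contracted to $\mathrm{Sing}(X)$ allow a cycle, and showing that the exceptional configurations are exactly $2D_4$ and $D_4+2A_1$. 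Otherwise $\coreg(X)\geqslant 1$, and $\coreg(X)=1$ follows from a cuspidal or tacnodal member giving a non-klt lc complement with a point lc center.
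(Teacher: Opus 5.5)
Your overall split, by the Kodaira types of $\phi_Y$ and whether the $j$-function is constant, matches the paper. But there is a genuine gap: the isotrivial case in degree $1$, which is where the theorem has content, is only announced, and the tool you name cannot settle it. Two ingredients are missing.

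\textbf{The reduction to $1$- and $2$-complements.} You need Theorem~\ref{thm-1-2-complement}: $\mathrm{coreg}(X)=0$ forces $\mathrm{coreg}_1(X)=0$ or $\mathrm{coreg}_2(X)=0$. Without it nothing controls $l$-complements for large $l$. Your substitute, that a complement with $1$-dimensional dual complex pulls back to a fiber of multiplicative type, is false (see below).

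\textbf{An explicit analysis of $|-2K_X|$.} Isotriviality gives $\mathrm{coreg}_1(X)=1$, so everything hinges on $|-2K_X|$, which is explicit in $\mathbb{P}(1,1,2,3)$. A member has one of two forms.
\begin{enumerate}
\item It is $D_1+D_2$ with $D_i\in|-K_X|$. This yields no complement of coregularity $0$ when there is no $I_n$ or $I_n^*$ ($n\geqslant1$) fiber. In particular, for $F$ of type $I_0^*$ the only lc place of $\tfrac12F+\tfrac12F'$ is the central component, so fiber-supported complements are useless here.
\item It is $\{z+g_2(x,y)=0\}$. This restricts to $w^2=g_6(x,y)$ and is harmless when the branch curve $R$ is irreducible (Proposition~\ref{prop-irreducible-coreg}).
\end{enumerate}
For $j=0$, $R=\{z^3+f_6=0\}$ is reducible only when $f_6$ is a cube, i.e.\ only for $2D_4$ (Lemma~\ref{lem-irreducible-j0}). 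For $j=1728$, $R=\{z(z^2+f_4)=0\}$ always contains $\{z=0\}$. A local study of these members at the singular points is what separates $D_4+2A_1$ (where $\tfrac12\{z=0\}$ has an lc place over the $D_4$ point) from $E_7+A_1$ (Lemma~\ref{lem-local-coreg} at the $E_7$ point) and from $4A_1$. Mordell--Weil lattice data do not record log canonical thresholds of $2$-complements, so the Oguiso--Shioda list cannot replace this.

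\textbf{False claims in the framing.}
\begin{itemize}
\item A $1$-dimensional dual complex need not contain a cycle. For $2D_4$ and $D_4+2A_1$ the complement realizing coregularity $0$ has a segment as dual complex.
\item The $2A_1$ points do not lie on the $I_0^*$ fiber, since each member of $|-K_X|$ contains at most one singular point (Lemma~\ref{lem-red-irred-fibers}).
\item More seriously, ``$j$ has a pole iff there is an $I_n$ fiber'' is false: $I_n^*$ fibers with $n\geqslant1$ are poles as well. The surfaces $X'_1(D_5+A_1)$ and $X'_1(D_6)$ of Example~\ref{ex-very-special-surfaces} are non-isotrivial with no $I_n$ fiber and $\mathrm{coreg}_1=1$, so your non-isotrivial step fails for them. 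You need the $2$-complement $\tfrac12F+\tfrac12F'$ on $Y$ with $F$ of type $I_n^*$; its coefficient-one part is the chain of double components, again a segment (Lemma~\ref{lem-fiberscoreg}).
\item The bound $\mathrm{coreg}(X)\leqslant 1$ comes from a smooth member of $|-K_X|$. Cuspidal and tacnodal members are not lc.
\end{itemize}

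\textbf{The case $d\geqslant2$.} A count of conditions cannot tell a node from a cusp, which is exactly the issue. For $d=2$ it does not even give a member singular at a general point: $h^0(-K_X)=3$ equals the number of conditions, and singular points of members lie on the ramification curve. The paper reduces to $d=2$ by blowing up general points (Lemma~\ref{lem-coregpush}) and uses the double cover of $\mathbb{P}^2$ branched along a quartic.
\begin{itemize}
\item A general tangent line to a component of degree $\geqslant2$ pulls back to a nodal member avoiding $\mathrm{Sing}(X)$.
\item If the quartic is four non-concurrent lines, a general line through one of its nodes pulls back to an irreducible rational member with a node at an $A_1$ point.
\end{itemize}
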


Theorem \ref{thm-dp1-main} agrees with the expectation that ``most'' Fano varieties have coregularity $0$. 
For the notion of first coregularity $\mathrm{coreg}_1(X)$ see Definition \ref{defin-regularity}. 
We also prove a version of Theorem \ref{thm-dp1-main} which is formulated in terms of the configuration of singularities $\mathrm{Sing}(X)$. 

\begin{theorem}
\label{main-thm2}
Let $\mathbb{K}$ be an algebraically closed field of characteristic~$0$. 
Let $X$ be a del Pezzo surface with du Val singularities of degree $1$ defined over $\mathbb{K}$.
Then 
\begin{itemize}
\item 
either $\mathrm{coreg}(X)=1$, and $X$ has one of the following configurations of singular points:
\begin{itemize}
\item
 $\mathrm{Sing}(X)\in \{E_8, E_7+A_1, E_6+A_2\}$, in which case $\rho(X)=1$, or 
\item 
$\mathrm{Sing}(X)\in \{E_6, D_4+A_2, D_4, 3A_2, 2A_2, A_2, 4A_1, \varnothing\}$, in which case $\rho(X)>1$,
\end{itemize}
\item
or $\mathrm{coreg}(X)=0$.
\end{itemize}
\end{theorem}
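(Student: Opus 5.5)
Theorem \ref{main-thm2} is a direct consequence of Theorem \ref{thm-dp1-main}: $\mathrm{coreg}(X)=1$ exactly when $\phi_Y$ is isotrivial and $\mathrm{Sing}(X)\notin\{2D_4, D_4+2A_1\}$. So the plan is to list the configurations $\mathrm{Sing}(X)$ that can occur when $\phi_Y$ is isotrivial, and then remove $2D_4$ and $D_4+2A_1$.

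I would use the canonical correspondence between $X$ and $\phi_Y$. The surface $Y$ is a rational elliptic surface with a section, namely the exceptional curve $E$ over the base point of $|-K_X|$. The singular points of $X$ correspond to the fiber components of $\phi_Y$ disjoint from $E$. A fiber of Kodaira type $T$ therefore contributes the root system of its non-identity components: $I_n\mapsto A_{n-1}$, $II\mapsto\varnothing$, $III\mapsto A_1$, $IV\mapsto A_2$, $I_n^*\mapsto D_{n+4}$, $IV^*\mapsto E_6$, $III^*\mapsto E_7$, $II^*\mapsto E_8$. Isotriviality means the $j$-invariant is constant, so no fibers of type $I_n$ or $I_n^*$ with $n>0$ occur. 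For a rational elliptic surface the Euler numbers of the singular fibers sum to $12$. With constant $j$, the local monodromies all lie in one finite cyclic group, and the fibers are determined by $j$:
\begin{itemize}
\item $j=0$: fibers of types $II, IV, IV^*, II^*$, with Euler numbers $2,4,8,10$;
\item $j=1728$: fibers of types $III, III^*$, with Euler numbers $3,9$;
\item general $j$: only fibers of type $I_0^*$, with Euler number $6$.
\end{itemize}
Each case can also contain $I_0^*$ fibers, since $-1$ lies in every monodromy group.

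Enumerating the partitions of $12$ gives the following configurations:
\begin{itemize}
\item $j=0$: $II^*+II\mapsto E_8$; $IV^*+IV\mapsto E_6+A_2$; $IV^*+2II\mapsto E_6$; $I_0^*+IV+II\mapsto D_4+A_2$; $I_0^*+3II\mapsto D_4$; $3IV\mapsto 3A_2$; $2IV+2II\mapsto 2A_2$; $IV+4II\mapsto A_2$; $6II\mapsto\varnothing$.
\item $j=1728$: $III^*+III\mapsto E_7+A_1$; $I_0^*+2III\mapsto D_4+2A_1$; $4III\mapsto 4A_1$.
\item general $j$: $2I_0^*\mapsto 2D_4$.
\end{itemize}
Removing $2D_4$ and $D_4+2A_1$ leaves exactly the eleven configurations in the statement.

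The step I expect to be the main obstacle is justifying this enumeration. There are two parts. First, one must show that mixing fiber types across different monodromy orders cannot happen. For example, $I_0^*$ combined with $II$ is fine because the monodromy group of order $6$ contains $-1$, but $II$ combined with $III$ would force non-constant $j$. This follows from the local form of $j$ near each Kodaira fiber: $j$ equals $0$ at $II$, $IV$, $IV^*$, $II^*$ and $1728$ at $III$, $III^*$. Second, one must confirm that every listed configuration is actually realized, for instance by Weierstrass models $y^2=x^3+g(t)$ or $y^2=x^3+f(t)x$ with $\deg f\le 4$ and $\deg g\le 6$. Realization is only needed to show the list is sharp; the implication stated in the theorem does not require it.

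Finally, the Picard rank. We have $\rho(Y)=10$, and the Shioda–Tate formula gives $\rho(X)=\rho(Y)-1-\mathrm{rk}(\mathrm{Sing}(X))=9-\mathrm{rk}(\mathrm{Sing}(X))$. So $\rho(X)=1$ exactly for the rank-$8$ configurations $E_8$, $E_7+A_1$, $E_6+A_2$. All the other configurations in the list have rank at most $6$, so $\rho(X)>1$.
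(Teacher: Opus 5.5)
Your proposal is correct and follows the paper's own argument: the paper also derives Theorem~\ref{main-thm2} directly from Theorem~\ref{thm-dp1-main}, using the isotrivial configurations of Remark~\ref{rem-possible-singular-fibers} (Tables~2 and~3) and discarding $2D_4$ and $D_4+2A_1$. Your main addition is the explicit formula $\rho(X)=9-\mathrm{rk}\,\mathrm{Sing}(X)$, which the paper simply reads off from its tables; it comes from counting the exceptional curves of $Y\to X$ rather than from Shioda--Tate. Your $j=0$ and $j=1728$ lists leave out $2I_0^*$, but this is harmless because $2D_4$ is removed anyway.
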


We point out that the surfaces with the configurations of singular fibers described in Theorem~\ref{main-thm2} also could have coregularity $0$. We have examples of such surfaces with Picard number $1$, see Remarks~\ref{rem-equations-j-0-bad} and \ref{rem-equations-j-1728-bad}. On the other hand, we expect that such surfaces with coregularity $0$ also exist in the case when the Picard number is greater than $1$. We also note that ``most'' du Val del Pezzo surfaces have coregularity $0$. For example, in the case of Picard number $1$, there exists $27$ possible configurations of singular points on du Val del Pezzo surface of any degree, see \cite{Ye02}, and~only for $4$ such configurations there exists a surface $X$ with $\mathrm{coreg}(X)=1$. 

We note that on a del Pezzo surface $X$ with du Val singularities of degree $1$, an element of~$|-K_X|$ is reduced, irreducible, and it can contain only one singular point of $X$, cf. Lemma~\ref{lem-red-irred-fibers}. For the correspondence between singular points of $X$ and singular fibers of $\phi_Y$ in this case, see Table~1. Note that this correspondence is not bijective, see Remark \ref{rem-notbijection}. However, using this correspondence we can reformulate 
Theorem \ref{main-thm2} in terms of singular fibers of the elliptic fibration $\phi_Y$. We use the notation of the Kodaira classification of singular fibers of elliptic fibrations. 
By $j$ we denote $j$-invariant of the fibers of $\phi_Y$ which is a function on the base $\mathbb{P}^1$.

\begin{theorem}
\label{cor-main}
Let $\mathbb{K}$ be an algebraically closed field of characteristic~$0$. 
Let $X$ be a del Pezzo surface with du Val singularities of degree $1$ defined over $\mathbb{K}$ . 
Then $\mathrm{coreg}(X)=1$ if and only if the elliptic fibration $\phi_Y$ is isotrivial and has one of the following configurations of singular fibers:
\[
j=0:\;\; II+II^*,\;\; IV+IV^*,\;\; 2II+IV^*, \;\; II+IV+I_0^*, \;\; 3II+I_0^*, \;\; 3IV, \;\; 2II+2IV,\;\; 4II+IV,\;\; 6II;
\]
\[
j=1728:\quad III+III^*,\quad 4III.
\]
In particular, one has $\mathrm{coreg}(X)=1$ and $\rho(X)=1$ if and only if the singular fibers of $\phi_Y$ are one of the following:
\begin{equation}
\label{eq-extremal}
II+II^*,\quad \quad III+III^*, \quad \quad IV+IV^*.
\end{equation}
\end{theorem}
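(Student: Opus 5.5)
We derive Theorem~\ref{cor-main} from Theorem~\ref{thm-dp1-main} by classifying the isotrivial fibrations that can occur. By Theorem~\ref{thm-dp1-main}, $\mathrm{coreg}(X)=1$ holds exactly when $\phi_Y$ is isotrivial and $\mathrm{Sing}(X)\notin\{2D_4, D_4+2A_1\}$. So we must list all configurations of singular fibers of isotrivial relatively minimal rational elliptic surfaces $\phi_Y\colon Y\to\mathbb{P}^1$ that arise from degree $1$ du Val del Pezzo surfaces. We then remove those corresponding to $2D_4$ and $D_4+2A_1$.

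For the constraints we use two facts. First, $Y$ is a rational elliptic surface, so $\sum_F e(F)=12$ over the singular fibers. Second, Lemma~\ref{lem-red-irred-fibers} says each element of $|-K_X|$ is reduced and irreducible, so no fiber is multiple. For an isotrivial fibration the $j$-invariant is constant, and the fiber types are controlled by the local monodromy, which lies in a finite subgroup of $\mathrm{SL}_2(\mathbb{Z})$. We split into three cases.
\begin{itemize}
\item If $j=0$, the monodromy has order dividing $6$, so the allowed fibers are $II, IV, I_0^*, IV^*, II^*$, with Euler numbers $2,4,6,8,10$.
\item If $j=1728$, the allowed fibers are $III, I_0^*, III^*$, with Euler numbers $3,6,9$.
\item For any other constant $j$, only $I_0^*$ fibers occur.
\end{itemize}

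Next we enumerate partitions of $12$ into these Euler numbers, subject to the requirement that the product of local monodromies is trivial. Equivalently, the fibration is a quotient of $E\times C$ by a cyclic group of order $6$, $4$ or $2$, and the local monodromy contributions must sum to $0$ modulo that order. For $j=0$, assign $II\mapsto 1$, $IV\mapsto 2$, $I_0^*\mapsto 3$, $IV^*\mapsto 4$, $II^*\mapsto 5$; the sum must be divisible by $6$. This yields exactly the nine configurations listed. Examples are $II+II^*$ (sum $6$), $2II+IV^*$ (sum $6$) and $6II$ (sum $6$), while $2I_0^*$ is excluded from this case and treated separately. For $j=1728$ the options $III+III^*$, $4III$, $2III+I_0^*$ and $2I_0^*$ appear. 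For generic $j$ only $2I_0^*$ occurs.

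It remains to identify which of these configurations come from $2D_4$ or $D_4+2A_1$, using Table~1. The $I_0^*$ fiber meeting the section at a smooth point of $X$ corresponds to $D_4$, and $III$ corresponds to $A_1$. Hence $2I_0^*$ (for any $j$) gives $2D_4$, and $2III+I_0^*$ gives $D_4+2A_1$. We also need configurations such as $3II+I_0^*$, which yields $\mathrm{Sing}(X)=D_4$, and $II+IV+I_0^*$, which yields $D_4+A_2$; neither lies in the excluded set, so both stay. Removing the excluded cases leaves precisely the stated list.

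For the Picard number, $\rho(Y)=10$. By the Shioda--Tate formula, $\rho(X)=1$ if and only if the fibration is extremal, meaning $\sum_F(m_F-1)=8$, where $m_F$ is the number of components of $F$. Checking the list, this holds only for $II+II^*$, $III+III^*$ and $IV+IV^*$.

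The main obstacle is the existence direction: showing that every configuration on the list, and only these, is actually realized. The list uses only necessary conditions, so each configuration needs an explicit construction. We plan to use Weierstrass models $y^2=x^3+g(t)$ for $j=0$ and $y^2=x^3+f(t)x$ for $j=1728$, with $\deg f\le 4$ and $\deg g\le 6$, choosing the root multiplicities to produce the required fiber types. A second obstacle is confirming the fiber-to-singularity correspondence, given the non-bijectivity noted in Remark~\ref{rem-notbijection}.
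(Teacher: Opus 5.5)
Your proposal is correct and follows essentially the paper's route: Theorem~\ref{thm-dp1-main} reduces the question to isotrivial $\phi_Y$ with $\mathrm{Sing}(X)\notin\{2D_4,D_4+2A_1\}$, and you then combine two ingredients. The first is the list of isotrivial configurations (Remark~\ref{rem-possible-singular-fibers}), which you rederive from $\sum_F\chi(F)=12$ rather than citing \cite{Mi90}. The second is the fiber-to-singularity direction of Table~1, which removes exactly $2I_0^*$ and $I_0^*+2III$. For the Picard number, $\rho(X)=9-\sum_F(m_F-1)$ singles out the three extremal cases, whereas the paper reads $\rho(X)$ off Tables~2 and~3.

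Two minor points. The existence of each configuration, which you call the main obstacle, is not needed for the biconditional. The non-bijectivity in Remark~\ref{rem-notbijection} is also harmless, since each fiber type determines a unique singularity type.
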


An elliptic surface with one of the  configurations \eqref{eq-extremal} is  \emph{extremal}, which means that its Mordell--Weil group is finite. Such surfaces enjoy many interesting properties, see \cite{MP86} and references therein. Also, according to Theorem \ref{cor-main}, del Pezzo surfaces with positive coregularity are special in the following sense. 
Their $j$-function on the base is equal either to $0$ or to $1728$, which corresponds to~elliptic curves with complex multiplication, see Proposition \ref{rem-cm-ell-curves}. 
The corresponding elliptic fibrations $\phi_Y\colon Y\to \mathbb{P}^1$ admit an automorphism which induces complex multiplication on the general fiber of~$\phi_Y$.

Recall that a Calabi--Yau pair $(X,D)$ where $\dim X=n$ admits a {\em toric model} if $(X,D)$ is crepant birationally equivalent to $(\pp^n,H_0+\dots+H_n)$, see Definition~\ref{def:crep-bir-isom} for the notion of crepant birational equivalence. We say that a Fano variety admits a toric model if it admits a 1-complement $D$ such that the pair $(X, D)$ has a toric model. Note that if a Fano variety $X$ has a toric model then $X$ must be rational. 
In~\cite[Lemma 1.13]{GHK15} it is proved that 
a rational surface $X$ admits a toric model if and only if $\mathrm{coreg}_1(X)=0$.  
Hence, using Proposition \ref{prop-smooth-dP} we obtain that a general smooth del Pezzo surface
admits a toric model.
In~\cite[Theorem 3.3]{EFM24} it is shown that the toric model exists for ``most'' del Pezzo surfaces with du Val singularities and of Picard rank $1$, see Remark \ref{rem-EFM24}. 
In other words, the authors computed $\coreg_1(X)$ for du Val del Pezzo surfaces of Picard rank $1$. 
We prove a generalization of their result to the case of del Pezzo surfaces with du Val singularities of arbitrary Picard number.

\begin{theorem}
\label{cor-toric-models}
Let $\mathbb{K}$ be an algebraically closed field of characteristic~$0$. 
Let $X$ be a del Pezzo surface with du Val singularities defined over $\mathbb{K}$. 
Then $\mathrm{coreg}_1(X)=1$ (which is equivalent to the non-existence of a toric model of $X$) if and only if $(-K_X)^2=1$, and one of the following holds: 
\begin{enumerate}
\item
either $\phi_Y$ is isotrivial,  
\item 
or $X$ is isomorphic to one of the surfaces
\begin{equation}
\label{eq-two-exceptions}
X'_1(D_5+A_1), \quad \text{or } \quad X'_1(D_6),
\end{equation}
\end{enumerate}
described in Example \ref{ex-very-special-surfaces}. 
The surfaces \eqref{eq-two-exceptions} are special with given configurations of singularities. Also, for the surfaces \eqref{eq-two-exceptions}, the corresponding elliptic fibration $\phi_Y$ is not isotrivial, $\mathrm{coreg}_1(X)=1$, and  $\mathrm{coreg}(X)=\mathrm{coreg}_2(X)=0$. 
\end{theorem}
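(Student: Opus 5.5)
We need to prove Theorem \ref{cor-toric-models}: $\mathrm{coreg}_1(X)=1$ if and only if $d=1$ and either $\phi_Y$ is isotrivial or $X$ is one of the two surfaces \eqref{eq-two-exceptions}.

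The plan is to reduce to degree $1$ using Theorem \ref{thm-dp1-main}, and then to analyse $1$-complements, i.e.\ elements of $|-K_X|$, directly. Since $\coreg(X)\leqslant\coreg_1(X)$, Theorem \ref{thm-dp1-main} gives $\coreg_1(X)=0$ for $d\geqslant 2$. If $d=1$ and $\phi_Y$ is isotrivial, the same theorem gives $\coreg(X)=1$ except when $\mathrm{Sing}(X)\in\{2D_4,D_4+2A_1\}$, and in the non-exceptional cases $\coreg_1(X)\geqslant\coreg(X)=1$ follows at once.

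The remaining work for $d=1$ is to show the following. When $\phi_Y$ is isotrivial with $\mathrm{Sing}(X)\in\{2D_4,D_4+2A_1\}$, no $D\in|-K_X|$ makes $(X,D)$ lc with a zero-dimensional stratum. When $\phi_Y$ is not isotrivial, such a $D$ exists except for the two surfaces.

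By Lemma \ref{lem-red-irred-fibers}, every $D\in|-K_X|$ is reduced and irreducible and passes through at most one singular point of $X$. Its pullback to $Y$ is a fibre $F$ of $\phi_Y$, and $(X,D)$ is lc of coregularity $0$ exactly when the log pullback of $(X,D)$ to $Y$ has an lc centre that is a point. I would translate this into a condition on Kodaira fibres using Table 1, passing from $X$ to the blow-up of the base point and then to $Y$ and tracking coefficients (the section over the base point enters with coefficient $0$, so the pair on $Y$ is essentially $(Y,F)$ plus exceptional curves with the coefficients they acquire over the singular point).

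The expected outcome of this case check is:
\begin{itemize}
\item If $D$ is a nodal curve through smooth points, it corresponds to an $I_1$ fibre and gives coregularity $0$.
\item If $D$ is nodal through an $A_n$ point, it corresponds to $I_{n+1}$, where the cycle of curves again gives coregularity $0$.
\item If $D$ passes through a $D_n$ or $E_n$ point, one must compute the discrepancies of the corresponding $I_{m}^*$ or $II^*,III^*,IV^*$ fibre. Since $D$ is Cartier but $X$ is singular there, the lc condition can fail.
\end{itemize}
For $2D_4$ and $D_4+2A_1$ with isotrivial $\phi_Y$, the fibres are $2I_0^*$ or $I_0^*+2I_0$-type variants. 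Here the $I_0^*$ fibres should give klt or non-lc pairs through the $D_4$ point, so they provide no $1$-complement of coregularity $0$; this is consistent with coregularity $0$ being achieved only by $2$-complements there.

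For non-isotrivial $\phi_Y$, the $j$-function is nonconstant, so $\phi_Y$ has a fibre of type $I_n$ or $I_n^*$ with $n\geqslant1$. An $I_n$ fibre immediately gives a $1$-complement of coregularity $0$. So the obstruction arises only when every multiplicative fibre is of type $I_n^*$. I would check whether an $I_n^*$ fibre whose image $D$ passes through a $D_{n+4}$ point yields an lc pair with a zero-dimensional centre. I expect it does not, because the coefficient-$2$ chain forces $D$ to be non-lc or to make a $2$-complement necessary. This reduces the problem to elliptic surfaces whose singular fibres are all of additive or $I_n^*$ type.

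The step I expect to be the main obstacle is classifying the rational elliptic surfaces with a section in which every fibre with $j=\infty$ is of type $I_n^*$, and showing they are exactly the ones corresponding to $X'_1(D_5+A_1)$ and $X'_1(D_6)$. I would use the Oguiso--Shioda or Persson lists of configurations, together with the Euler number $12$ and the constraint that $j$ is nonconstant. That leaves candidates such as $I_1^*+I_1^*+\dots$ or $I_2^*+\dots$. For these I would check the surfaces of Example \ref{ex-very-special-surfaces} against $X_1(D_5+A_1)$ and $X_1(D_6)$, which share the same singularity configurations, verifying that uniqueness holds up to the extra requirement of no $I_n$ fibre. I would then verify $\coreg(X)=\coreg_2(X)=0$ for these surfaces via the $2$-complement $\tfrac12(D_1+D_2)$-type construction coming from Theorem \ref{thm-dp1-main}.
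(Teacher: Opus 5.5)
Your proposal is correct and follows essentially the paper's route. The shared ingredients are:
\begin{itemize}
\item the criterion that $\mathrm{coreg}_1(X)=0$ if and only if $\phi_Y$ has an $I_n$ fibre with $n\geqslant 1$ (Lemma \ref{lem-fiberscoreg});
\item the observation that a non-constant $j$ has a pole, which forces an $I_n$ or $I_n^*$ fibre;
\item the $\chi=12$ enumeration of configurations that contain an $I_n^*$ fibre but no $I_m$ fibre, checked against Miranda's (or Persson's) list;
\item the $2$-complement $\tfrac12(F+F')$, which gives $\mathrm{coreg}_2(X)=0$.
\end{itemize}

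Some minor slips to tidy up:
\begin{itemize}
\item For $D_4+2A_1$ the isotrivial configuration is $I_0^*+2III$, not $I_0^*+2I_0$.
\item $I_1^*+I_1^*$ is excluded outright by $\chi=14>12$.
\item In the isotrivial case you need neither Theorem \ref{thm-dp1-main} nor a separate check of the $2D_4$ and $D_4+2A_1$ cases. Isotriviality means $j$ has no poles, hence there are no $I_n$ fibres, hence $\mathrm{coreg}_1(X)=1$ directly.
\item You do not need a uniqueness check for $X'_1(D_5+A_1)$ and $X'_1(D_6)$, since these surfaces are defined by their fibre configurations.
\end{itemize}
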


Recall that a \emph{Looijenga pair} is an lc surface pair $(X, D)$ such that $X$ is smooth and rational, and $\mathrm{coreg}_1(X)=0$ is attained on the pair $(X, D)$. Such pairs are the object of intensive study, see \cite{Fr15} and references therein. One may say that Looijenga pairs  generalize smooth del Pezzo surfaces with a nodal anti-canonical section. 

For the results on the existence of toric models for smooth Fano threefolds see \cite{LMV24}. It~is proven that a general rational smooth Fano threefold admits a toric model. In \cite{Du22} it is conjectured that a $3$-dimensional rational Calabi–Yau pair $(X, D)$ of index one and coregularity~$0$ has a~toric model. In \cite{EFM24}, a higher-dimensional analog of this conjecture is proposed: if $(X, D)$ is a~rational $n$-dimensional Calabi--Yau pair of index one and coregularity $0$ such that $\mathcal{D}(X, D)\simeq S^{n-1}$, and any stratum of the boundary divisor $D$ is rational, then $(X, D)$ admits a toric model. It would be interesting to understand the relation between the existence of a toric model and the variation of the anti-canonical elements on a given variety.

In the case when a del Pezzo surface $X$ is defined over a field of characteristic $0$ which is not algebraically closed, the definition of coregularity still makes sense, see Remark \ref{rem-non-closed-field}. 
We prove the~following result. 

\begin{theorem}
\label{main-thrm-3}
Let $X$ be a du Val del Pezzo surfaces defined over a field $\mathbb{K}$ of characteristic $0$. Assume further that 
\begin{itemize}
\item
either $(-K_X)^2\geqslant 5$,
\item 
or $(-K_X)^2\geqslant 3$ and $X(\mathbb{K})\neq\varnothing$. 
\end{itemize}
Then $\coreg(X) = \mathrm{coreg}_1(X)=0$.
\end{theorem}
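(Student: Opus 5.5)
Since $0\leqslant\coreg(X)\leqslant\coreg_1(X)$, it suffices to exhibit a $1$-complement $D\in|-K_X|$, defined over $\mathbb{K}$, such that $(X,D)$ is log canonical with a zero-dimensional lc centre (equivalently, the dual complex of a dlt modification is a circle). The natural candidate is a $\mathbb{K}$-rational triangle or nodal anticanonical curve. I will work with the anticanonical model: for $d\geqslant 3$, $-K_X$ is very ample and gives $X\subset\mathbb{P}^d$ over $\mathbb{K}$, so elements of $|-K_X|$ defined over $\mathbb{K}$ are hyperplane sections $X\cap H$ with $H$ a $\mathbb{K}$-hyperplane. I then need $H$ such that $C=X\cap H$ is reduced, lc, and has a node (or a cycle of rational curves) with the whole configuration Galois-stable. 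Over $\bar{\mathbb{K}}$, Theorem~\ref{thm-dp1-main} already gives coregularity $0$; the task is descending a suitable complement.

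\emph{Case $X(\mathbb{K})\neq\varnothing$, $d\geqslant3$.} Take $p\in X(\mathbb{K})$. If $p$ is smooth, the tangent hyperplanes at $p$ form a $\mathbb{K}$-rational linear family of dimension $d-3\geqslant 0$ (for $d=3$ it is the single plane $T_pX$), and every member gives $C\in|-K_X|$ singular at $p$. A general such member (or, for $d=3$, the tangent section itself) is a cubic-type curve of arithmetic genus $1$ with a double point at $p$. The bad cases are a cusp at $p$, or a non-reduced or otherwise non-lc section. These are excluded for a general member when $d\geqslant4$ because the family has positive dimension, and for $d=3$ by moving $p$. Since $\mathbb{K}$ is infinite and $X$ is unirational once it has a smooth point (Segre--Manin for $d\geqslant3$), smooth $\mathbb{K}$-points are Zariski dense, so I can choose $p$ off the closed locus where the tangent section is cuspidal (the parabolic-type curve on $X$) and off the lines. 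Then $C$ is nodal at $p$, the node is a $\mathbb{K}$-point, and $(X,C)$ is lc with a $0$-dimensional lc centre, so $\coreg_1(X)=0$. If the only $\mathbb{K}$-points are singular, they are $\mathbb{K}$-rational du Val points. There I would take hyperplanes through the singular point and check that a general one gives an lc curve whose dual complex of the log resolution is a circle, as in the algebraically closed case using the singularity types.

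\emph{Case $d\geqslant5$.} It remains to produce a $\mathbb{K}$-point. For $d\geqslant5$ a du Val del Pezzo surface over a field always has a $\mathbb{K}$-point (Enriques--Swinnerton-Dyer--Manin for smooth ones; for singular ones, pass to the minimal resolution, a weak del Pezzo surface of the same degree, and apply the same argument, or note that a Galois-stable set of singular points of odd total "weight" yields a rational point). Then the previous case applies. An alternative avoiding points is a Galois-invariant cycle of $(-1)$-curves or conics, e.g. for $d=6$ the hexagon of lines, which is a $\mathbb{K}$-rational anticanonical cycle. This argument extends to $d=5,7,8,9$ via the standard Galois-stable configurations.

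\textbf{Main obstacle.} I expect the main difficulty to be the singular setting. I must ensure that the tangent-section argument yields an lc (not merely reduced) complement when the node could coincide with, or the curve pass through, du Val points, and handle surfaces whose only $\mathbb{K}$-points are singular. I would first try to reduce to the minimal resolution $\tilde X$. There I would take the pullback $\tilde C$ of $C$ plus the exceptional curves with the crepant coefficients, which are all $0$ for curves avoiding the singular points. I would then check the Galois-stability of the resulting cycle, relying on the classification of elements of $|-K_X|$ from the proof of Theorem~\ref{thm-dp1-main}.
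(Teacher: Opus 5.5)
\textbf{There is a genuine gap in your case $d\geqslant 5$.} You claim that a du Val del Pezzo surface of degree $\geqslant 5$ always has a $\mathbb{K}$-point. This is false. It holds for $d=5$ (Enriques--Swinnerton-Dyer) and for $d=7$, but the following are all pointless:
\begin{itemize}
\item a nontrivial Severi--Brauer surface ($d=9$);
\item the product $C\times C$ of a pointless conic ($d=8$);
\item the blow-up of a pointless Severi--Brauer surface at a closed point of degree $3$ in general position ($d=6$).
\end{itemize}
So for $d\in\{6,8,9\}$ your main route is unavailable. Everything then rests on your fallback (``Galois-invariant cycle of $(-1)$-curves or conics via the standard configurations''), which you assert but do not prove. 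It is not routine:
\begin{itemize}
\item A Severi--Brauer surface has no $(-1)$-curves at all. One needs something else, e.g.\ the Galois-stable triangle of lines through a degree-$3$ point, or a reduction to $d=6$.
\item The singular sextic del Pezzo surfaces are not all toric and need individual complements.
\item Already for $d=5$, if the Galois group acts on the ten lines through the full $S_5$, there is no Galois-stable pentagon.
\end{itemize}
This is exactly the content of Proposition~\ref{prop-dp6789}. For $d=6$ the paper goes through the six types of Dolgachev's Table 8.4: the hexagon of lines; negative curves plus a conic-bundle fibre; toric forms, which have a point because their dual graph has no symmetry; a line plus a conic on $\mathbb{P}^2$; and $C+E+F$ on $\mathbb{F}_2$. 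For $d=7,8,9$ it blows up a Galois-stable set of geometric points to reach $d=6$, or uses a Galois-stable $4$-cycle of rulings on forms of $\mathbb{P}^1\times\mathbb{P}^1$. Only $d=5$ is handled through the existence of a point.

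\textbf{The case with a point.} Your $d=3$ step is the paper's (Lemma~\ref{lem-dens3itd}): a $\mathbb{K}$-point off the Hessian surface is smooth and non-parabolic, so its tangent section is an lc complement with one-dimensional dual complex. Avoiding the lines is unnecessary there. For $d\geqslant 4$, however, ``a general tangent hyperplane section at $p$ is nodal and lc because the family has positive dimension'' is not a proof. Positive dimension does not rule out a degenerate second fundamental form at $p$. You also need global control of the section: reducedness, no further non-nodal singularities, and its behaviour at the du Val points. The paper avoids this entirely:
\begin{itemize}
\item it blows up $d-3$ general $\mathbb{K}$-points of the minimal resolution $Z$ lying off the $(-2)$-curves (Lemma~\ref{lem-blowup}), obtaining a weak del Pezzo surface $W$ of degree $3$;
\item its anticanonical model $Y$ is a du Val cubic surface that still has a $\mathbb{K}$-point;
\item then $\mathrm{coreg}_1(X)=\mathrm{coreg}_1(Z)\leqslant\mathrm{coreg}_1(W)=\mathrm{coreg}_1(Y)=0$.
\end{itemize}
Likewise, your separate treatment of surfaces whose only $\mathbb{K}$-points are singular (``take hyperplanes through the singular point and check'') is only a promise. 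The paper does not treat this case separately: it uses density of $X(\mathbb{K})$, coming from unirationality, to choose a general smooth $\mathbb{K}$-point.
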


The paper is organized as follows. In Section \ref{sec-prelim}, we recall some standard definitions and basic results on the geometry on del Pezzo surfaces and the coregularity. In Section \ref{sec-dp-over-c} we consider del Pezzo surfaces with at worst du Val singularities defined over the field $\mathbb{C}$, and compute their coregularity. 
In Section \ref{sec-dp-over-k} we consider del Pezzo surfaces with at worst du Val singularities defined over fields which are not algebraically closed, and prove some results on the coregularity of such surfaces. 
Finally, in Section \ref{sec-examples-and-questions} we propose several open questions related to our results. 

\medskip

\textbf{Acknowledgements.} We thank Alexander Kuznetsov and Yuri Prokhorov for useful discussions. Also we are grateful to the anonymous referee for many helpful suggestions. 

\section{Preliminaries}
\label{sec-prelim}
In this section we work over an algebraically closed field $\mathbb{K}$ of characteristic $0$ unless stated otherwise.
We
use the language of the minimal model program (the MMP for short), see
e.g.~\cite{KM98}.

\subsection{Contractions} By a \emph{contraction} we mean a surjective
morphism $f\colon X \to Y$ of normal varieties such that $f_*\oo_X =
\oo_Y$. In particular, $f$ has connected fibers. A
\emph{fibration} is defined as a contraction $f\colon X\to Y$ such
that $\dim Y<\dim X$.

\subsection{Pairs and singularities} A \emph{pair} $(X, D)$ consists
of a normal variety $X$ and a boundary \mbox{$\mathbb{Q}$-divisor} $D$ with
coefficients in $[0, 1]$ such that $K_X + D$ is $\mathbb{Q}$-Cartier.
Let $\phi\colon W \to X$ be a
birational contraction, and let $(X,D)$ be a pair. Write $K_W +D_W = \phi^*(K_X +D)$.
The \emph{log discrepancy} of a~prime divisor $E$ on $W$ with respect
to $(X, D)$ is $1 - \mathrm{coeff}_E D_W$ and it is denoted by $a(E, X, D)$. We~say $(X, D)$ is log canonical or simply lc (resp. klt)(resp., canonical) if $a(E, D, B)$
is $\geqslant 0$ (resp.~$> 0$)(resp., $\geqslant 1$) for every $E$ and every birational contraction $\phi$.

Let $(X, D)$ be an lc pair.
Then the \emph{log-canonical threshold} of an effective $\mathbb{Q}$-Cartier $\mathbb{Q}$-divisor $B$ with respect to the pair $(X, D)$ is defined as
\[
\mathrm{lct} (X, D; B) = \mathrm{sup} \{ \lambda \in \mathbb{Q}\ |\ (X, D + \lambda B)\ \text{is lc} \}.
\]


\begin{definition}\label{def:crep-bir-isom}
Let $(X,D)$ and $(Y,D_Y)$ be two pairs.
We say that these two pairs
are {\em crepant birationally equivalent} if there exists a commutative diagram
\begin{equation}
\begin{tikzcd}
& (V, D_V) \ar[rd, "\psi"] \ar[dl, swap, "\phi"] & \ \\
(X, D) \ar[rr, dashed, "\alpha"] & & (Y, D_Y)
\end{tikzcd}
\end{equation}
where $\alpha$ is a birational map, $\phi$ and $\psi$ are birational contractions, $\phi_*(D_V)=D$, $\psi_*(D_V) = D_Y$, and 
\[
K_V + D_V=\phi^*(K_X+D) = \psi^*(K_Y + D_Y).
\]
Note that here $D_V$ is not necessarily a boundary since it may have negative coefficients.  
In the previous setting, we write $(X,D)\simeq_{\rm cbir} (Y,D_Y)$.
Given $(X,D)$ and $\alpha\colon X \dashrightarrow Y$ as above, there is a~unique divisor $D_Y$ for which $(X,D) \sim_{\rm cbir} (Y,D_Y)$. 
The divisor $D_Y$ is called the {\em crepant transform} of $D$ on $Y$.
\end{definition}

\subsection{Complements and Calabi--Yau pairs}
\label{sect-log-CY}
Let $(X, D)$ be a log canonical pair. 
We say $(X,D)$ is a \emph{Calabi--Yau pair} (or \emph{CY pair} for short) if $K_X + D
\sim_{\mathbb{Q}} 0$. In this case, $D$ is called a {\em $\mathbb{Q}$-complement} of $K_X$. If $N(K_X + D)\sim 0$ for some $N$, we say that $D$ is an {\em $N$-complement} of $K_X$. 

\subsection{Dual complex and coregularity}
Let $D=\sum D_i$ be a Cartier divisor on a smooth variety~$X$. Recall that $D$ has \emph{simple normal crossings} (snc for short), if all the components $D_i$ of $D$ are smooth, and any point in $D$ has an open neighborhood in the analytic topology that is analytically equivalent to the union of coordinate hyperplanes.

\begin{definition}
\label{def-dual-complex}
Let $D=\sum_{i=1}^r D_i$ be a simple normal crossing divisor on a smooth variety $X$.
The~\emph{dual complex}, denoted by $\mathcal{D}(D)$ is a regular CW-complex (more precisely, a $\Delta$-complex, so it admits a natural PL-structure) constructed as follows.  
\begin{itemize}
\item
The simplices $v_Z$ of $\mathcal{D}(D)$ are in bijection with irreducible components $Z$ of the intersection $\bigcap_{i\in I} D_i$ for any subset $I\subseteq \{ 1, \ldots, r\}$, and the dimension of $v_Z$ is equal to $\#I-1$.
\item
The gluing maps are constructed as follows. 
For any subset $I\subseteq \{ 1, \ldots, r\}$, let $Z\subset \bigcap_{i\in I} D_i$ be any irreducible component, and for any $j\in I$ let $W$ be a unique component of $\bigcap_{i\in I\setminus\{j\}} D_i$ containing $Z$. Then the gluing map is the inclusion of $v_W$ into $v_Z$ as a~face of $v_Z$ that does not contain the vertex $v_j$. 
\end{itemize}
\end{definition}

Note that the dimension of $\mathcal{D}(D)$ does not exceed $\dim X-1$. If $\mathcal{D}(D)$ is empty, then we set ${\dim \mathcal{D}(D)=-1}$. In what follows, for a divisor $D$ by $D^{=1}$, we denote the reduced sum of the components of $D$ with coefficient one. For an lc log CY pair $(X, D)$, we define $\mathcal{D}(X, D)$ as $\mathcal{D}(D_Y^{=1})$ where $f\colon (Y, D_Y)\to (X, D)$ is a log resolution of $(X, D)$, so that the formula
\[
K_{Y} + D_Y= f^*(K_X + D)
\]
is satisfied. It is known that the PL-homeomorphism class of $\mathcal{D}(D_Y^{=1})$ does not depend on the choice of a log resolution, see \cite[Proposition 11]{dFKX17}. 
For more results on the topology of dual complexes of Calabi--Yau pairs, see \cite{KX16}. 

\subsection{Coregularity}

We recall the notion of regularity which was introduced in \cite[7.9]{Sh00} and further studied in \cite{Mo24}, \cite{ALP24}.
\begin{definition}
\label{defin-regularity}
Let $X$ be a normal projective variety of dimension $n$ such that the canonical divisor~$K_X$ is $\mathbb{Q}$-Cartier. 
By the $\mathcal{D}(X, D)$ we mean the dual complex of an lc pair $(X, D)$. 
For $l\geqslant 1$, we~define the $l$-th \emph{regularity} of~$X$ by~the formula
\begin{equation}
\label{eq-reg-defin}
\mathrm{reg}_{l}(X) = \max \{ \dim \mathcal{D}(X, D)\ |\ D\in \frac{1}{l} |-lK_X|\}.
\end{equation}
In this situation, $D$ is  an $l$-complement of $K_X$.
Then the \emph{regularity} of $X$ is
\[
\mathrm{reg}(X) = \max_{l\geqslant 1} \{\mathrm{reg}_l(X)\}.
\]
Note that $\mathrm{reg}(X)\in \{-1, 0,\ldots, n-1\}$ where by convention we say that the dimension of~the~empty set is $-1$. The \emph{coregularity} of $X$ is defined as the number
$$
\mathrm{coreg}(X) = n-1-\mathrm{reg}(X).
$$
For convenience, for any $l\geqslant 1$ we also define $\mathrm{coreg}_l(X)=n-1-\mathrm{reg}_l(X)$, so that
$$
\mathrm{coreg}(X)=\min_{l\geqslant 1} \{\mathrm{coreg}_l(X)\}.
$$
\end{definition}

\begin{remark}
\label{rem-non-closed-field}
If $X$ is defined over a field $\mathbb{K}$ of characteristic $0$ which is not algebraically closed, to define $\mathrm{coreg}(X)$ one can use the definition of $G$-coregularity similar to \cite[Definition 2.1.1, Remark 2.1.4]{LPT25} where $G$ is the Galois group of $\overline{\mathbb{K}}$ over $\mathbb{K}$. 
Equivalently, in \eqref{eq-reg-defin} one can consider only the elements of the linear systems $|-lK_X|$ which are defined over $\mathbb{K}$. 
This situation will be considered in Section \ref{sec-dp-over-k}. 
If the ground field $\mathbb{K}$ has positive characteristic, it is not clear whether the dual complex is independent of the representative of the crepant equivalence class of a pair. Assuming that the answer is positive, we formulate  Question \ref{question-positive-char}, see also \cite[4.10]{Mo24} and \cite[31]{dFKX17}. 
\end{remark}

\begin{lemma}
\label{lem-coregpush}
Let $X$ and $Y$ be normal projective varieties, and $f\colon Y \to X$ be a birational morphism. Then for any $l\geqslant 1$ one has $\coreg_l(Y) \geqslant \coreg_l(X)$, and $\coreg(Y) \geqslant \coreg(X)$. In particular, if~$\coreg(Y) = 0$ then $\coreg(X) = 0$.
\end{lemma}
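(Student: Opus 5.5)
The plan is to push complements forward along $f$ and compare dual complexes. Fix $l\geqslant 1$ and let $D_Y\in \frac{1}{l}|-lK_Y|$ be an $l$-complement on $Y$ attaining $\mathrm{reg}_l(Y)$, so that $(Y,D_Y)$ is lc and $l(K_Y+D_Y)\sim 0$. If no such complement exists, then $\mathrm{reg}_l(Y)=-\infty$ (or $-1$ by convention) and the inequality is trivial. Set $D_X=f_*D_Y$. Since $f$ is birational, $l(K_X+D_X)=f_*(l(K_Y+D_Y))\sim 0$; here we push forward a principal divisor, so linear equivalence is preserved. In particular $K_X+D_X$ is $\mathbb{Q}$-Cartier, and $D_X\in\frac1l|-lK_X|$ is effective.

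The key step is to show that the pairs are crepant. The divisor $K_Y+D_Y-f^*(K_X+D_X)$ is $\mathbb{Q}$-linearly trivial and $f$-exceptional. Indeed, $l(K_Y+D_Y)=\mathrm{div}(s)$ for a rational function $s$, and then $l(K_X+D_X)=\mathrm{div}(s)$ on $X$ as well. Hence $f^*(l(K_X+D_X))=\mathrm{div}(f^*s)=l(K_Y+D_Y)$, and therefore $K_Y+D_Y=f^*(K_X+D_X)$. So $(Y,D_Y)$ and $(X,D_X)$ are crepant birationally equivalent in the sense of Definition~\ref{def:crep-bir-isom}, with $V=Y$.

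It follows that every log discrepancy of $(X,D_X)$ equals the corresponding log discrepancy of $(Y,D_Y)$. Since $(Y,D_Y)$ is lc, so is $(X,D_X)$, and hence $D_X$ is an $l$-complement on $X$. A common log resolution $W\to Y\to X$ computes both dual complexes from the same divisor $D_W^{=1}$, so $\mathcal{D}(X,D_X)\cong\mathcal{D}(Y,D_Y)$, using the independence result of \cite{dFKX17}. Thus $\mathrm{reg}_l(X)\geqslant \mathrm{reg}_l(Y)$, and since $\dim X=\dim Y$ this gives $\mathrm{coreg}_l(Y)\geqslant\mathrm{coreg}_l(X)$. Taking the minimum over $l$ yields the statement for $\mathrm{coreg}$, and the final claim follows because coregularity is nonnegative.

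The only delicate point is the crepancy identity. Nonnegativity of the coefficients of $D_X$ is automatic, since $D_X$ is the pushforward of an effective divisor. The argument via the rational function $s$ handles crepancy cleanly; this requires $X$ to be normal, so that $\mathrm{div}$ and pushforward of Weil divisors make sense.
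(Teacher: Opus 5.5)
Your proof is correct and follows the same route as the paper: you push an $l$-complement attaining $\mathrm{reg}_l(Y)$ forward along $f$, observe that $(X,f_*D_Y)$ is again a Calabi--Yau pair with homeomorphic dual complex, and take the minimum over $l$. The paper states this in one line, and your extra details fill in exactly what it leaves implicit: the crepancy identity $K_Y+D_Y=f^*(K_X+D_X)$ obtained via the rational function $s$, and the use of a common log resolution.
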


\begin{proof}
Let $l$ be a positive integer, and let $(Y,D)$ be a Calabi--Yau pair on which $\coreg_l(Y)$ is~achieved. Then $(X, f(D))$ is a Calabi--Yau pair, and dual complexes $\mathcal{D}(Y, D)$ and $\mathcal{D}(X, f(D))$ are homeomorphic. Therefore ${\coreg_l(Y) \geqslant \coreg_l(X)}$, and $\coreg(Y) \geqslant \coreg(X)$.
\end{proof}

We will repeatedly use the following important result which allows to check when the coregularity is positive.

\begin{theorem}[{\cite[Theorem 4]{FFMP25}}]
\label{thm-1-2-complement}
Let $X$ be a klt Fano variety defined over algebraically closed field of characteristic $0$. Then if $\mathrm{coreg}(X)=0$ then either $\mathrm{coreg}_1(X)=0$ or $\mathrm{coreg}_2(X)=0$.
\end{theorem}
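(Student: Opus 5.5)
The plan is to go from a $\mathbb{Q}$-complement of coregularity $0$ to one of index $1$ or $2$. We do this by restricting to a zero-dimensional stratum, reading off the index there, and lifting back to $X$. Assume $\mathrm{coreg}(X)=0$. Then there is an lc Calabi--Yau pair $(X,D)$ with $D\in\frac1N|-NK_X|$ for some $N$ and $\dim\mathcal{D}(X,D)=n-1$. First, take a $\mathbb{Q}$-factorial dlt modification $g\colon(X',D')\to(X,D)$ with $K_{X'}+D'=g^*(K_X+D)$. Set $B'=D'^{=1}$. Since the dual complex has dimension $n-1$, $B'$ has a zero-dimensional stratum $p$. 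Choose a flag of strata $X'\supset S_1\supset\dots\supset S_{n-1}\ni p$, where each $S_{k+1}$ is a component of $B'$ restricted to $S_k$. By dlt adjunction, each $(S_k,D_{S_k})$ is a dlt log Calabi--Yau pair of coregularity $0$.

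The second step is to compute the index at the bottom of the flag. A one-dimensional stratum $C=S_{n-1}$ with a point of coefficient $1$ in $D_C$ is $\mathbb{P}^1$. Its boundary $D_C$ has either two points with coefficient $1$, so $K_C+D_C\sim 0$, or one point with coefficient $1$ plus an effective remainder of degree $1$. In the latter case the standard classification of coregularity-$0$ pairs on $\mathbb{P}^1$ lets us replace the remainder by a $2$-complement, for instance $\tfrac12(q_1+q_2)$. Globally, the obstruction to index $1$ is the $\mathbb{Z}/2$ monodromy: the orientation character of the dual complex, which is a quotient of a sphere or disc by results of \cite{KX16}. This monodromy can only be killed after a double cover, so the index is $1$ or $2$. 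I would set this up as induction on dimension. The claim is that any dlt log Calabi--Yau pair of coregularity $0$, with the relevant part of the boundary having standard (hyperstandard) coefficients, admits a $1$- or $2$-complement $\Theta_{S_k}\geqslant D_{S_k}^{=1}$ of the same coregularity.

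The third step lifts the complement from $S_1$, a component of $B'$, to $X'$ and then pushes it down to $X$. Since $-K_X$ is ample and $X$ is klt, the divisor $-(K_{X'}+B')$ is big and nef up to the small part. After a perturbation and possibly running a $-(K_{X'}+B')$-MMP, I would use Kawamata--Viehweg vanishing in the usual Prokhorov--Shokurov lifting argument, for $m\in\{1,2\}$:
\[
H^1\bigl(X',\,-mK_{X'}-(m+1)\lfloor B'\rfloor + \dots\bigr)=0 .
\]
This makes the restriction $H^0(X',-m(K_{X'}+B'))\to H^0(S_1,-m(K_{S_1}+B'_{S_1}))$ surjective, so $m\Theta_{S_1}$ lifts to an element $\Theta'\in\frac1m|-mK_{X'}|$ with $\Theta'\geqslant B'$. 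Inversion of adjunction near $S_1$, together with connectedness of the non-klt locus, shows that $(X',\Theta')$ is lc. The lifted complement then contains the whole flag of strata down to $p$, so it still has coregularity $0$. Pushing forward by Lemma \ref{lem-coregpush}-type reasoning gives $\mathrm{coreg}_m(X)=0$ for $m=1$ or $m=2$.

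The main obstacle I expect is the lifting step. Vanishing needs $-(K_{X'}+B')$ to be nef and big, but on the dlt model it is only nef relative to $g$ up to the non-reduced part $D'-B'$. The lifted complement must also be lc everywhere, not just near $S_1$, and must not pick up new lc centres that break index $2$. To handle this I would first try to replace $(X',D')$ by a model on which $D'-B'$ is absorbed: run a $(K_{X'}+B')$-MMP, which terminates with a Mori fibre space since $K+B'\equiv -(D'-B')$. I would then carry out the lifting on that model, arguing as in Birkar's boundedness-of-complements framework. The index bound $\{1,2\}$ should come from the $\mathbb{Z}/2$ orientation argument above.
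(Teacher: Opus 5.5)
The paper does not prove this statement; it is quoted from \cite[Theorem 4]{FFMP25}. Your outline (dlt modification, MMP, adjunction to a stratum through a $0$-dimensional lc centre, induction on dimension, lifting by vanishing) uses the right family of ideas. However, the two steps that carry the content are not established.

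The first gap is the lifting. On the dlt model, $-(K_{X'}+B')\equiv D'-B'$ is effective but in general neither nef nor big, as you note yourself, and your remedy does not cover the essential case. A $(K_{X'}+B')$-MMP ends with a Mori fibre space $X''\to Z$. When $Z$ is a point, $-(K_{X''}+B'')$ is ample. Then $H^1(X'',-m(K_{X''}+B'')-S)=0$, and sections on a component $S$ of $B''$ lift. When $\dim Z>0$, this divisor is in general only ample over $Z$, global vanishing fails, and lifting from $S$ breaks down. You then have to descend to $Z$ via the canonical bundle formula and run an induction for generalized pairs of coregularity $0$ on $Z$. You also have to control the Cartier index of the moduli part and of the horizontal boundary, so that the pulled-back complement still has index $1$ or $2$. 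This is the part that needs the canonical bundle formula and generalized pairs. ``Arguing as in Birkar's framework'' only gives an index bounded in terms of $n$, not one in $\{1,2\}$.

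Relatedly, your inductive hypothesis is stated for log Calabi--Yau pairs. These have no positivity, so they cannot be fed into Kawamata--Viehweg vanishing at the next level. The hypothesis must be stated for log Fano (or Fano type) pairs with standard coefficients, with the condition $m\Theta_S\geqslant\lfloor (m+1)\mathrm{Diff}_S\rfloor$, because $-m(K_{X''}+B'')$ need not be Cartier along $S$. Finally, the boundary is dlt but not plt near $S$, so the inversion-of-adjunction plus connectedness argument for lc-ness away from $S$ has to be redone.

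The second gap is the source of the bound $\{1,2\}$. The paragraph on the ``$\mathbb{Z}/2$ orientation character'' is not an argument. That the dual complex is a finite quotient of a sphere is known from \cite{KX16} only in low dimension; in general it is the Koll\'ar--Xu conjecture. Even granting it, you give no mechanism that turns the orientation character into a linear equivalence $2(K_X+\Theta)\sim 0$ for a divisor $\Theta$ you have actually constructed.

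Note also that the given $D$ has arbitrary rational coefficients, so the real task is to produce a coregularity-$0$ complement with standard coefficients. That such a pair then satisfies $2(K+\Theta)\sim 0$ is itself a nontrivial theorem (Filipazzi--Mauri--Moraga), not a consequence of the topology of $\mathcal{D}(X,D)$. In a working lifting proof, the index is simply propagated upward from the curve case ($p+q$ or $p+\tfrac12 q_1+\tfrac12 q_2$ on $\mathbb{P}^1$). So the whole burden falls on the lifting step, including the Mori fibre space case with positive-dimensional base, which your proposal leaves open.
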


\subsection{Del Pezzo surfaces}
We recall some well-known properties of del Pezzo surface. By a \emph{del Pezzo surface} over a field $\mathbb{K}$ we mean a normal projective surface $X$ over $\mathbb{K}$ with ample anti-canonical Cartier divisor $-K_X$. We say that a del Pezzo surface has du Val singularities, if it has canonical singularities.  
\begin{theorem}[{cf. \cite{HW81}}]
\label{thm-del_pezzo_properties}
Let $X$ be a del Pezzo surface with du Val singularities over a perfect field $\mathbb{K}$. 
Put $d=(-K_X)^2$. 
Let $\Phi\colon X\dashrightarrow \mathbb{P}^d$ be a rational map given by the linear system $|-K_X|$. 
The following assertions hold:
\begin{enumerate}
    \item If $d \geqslant 3$, then $-K_X$ is very ample, so that $\Phi$ is an embedding;
    \item If $d \geqslant 2$, then $|-K_X|$ is base point free, so that $\Phi$ is a morphism;
    \item If $d = 3$, then $\Phi(X)$ is a cubic surface in $\mathbb{P}^3$;
    \item If $d = 2$, then $X$ is a hypersurface in $\mathbb{P}(1,1,1,2)$ of degree $4$;
    \item If $d = 1$, then $|-K_X|$ is an elliptic pencil, its base locus consists of one point $O \notin \operatorname{Sing}(X)$, and every curve in $|-K_X|$ is irreducible and smooth at $O$;
    \item If $d = 1$
    then $X$ is a hypersurface in $\mathbb{P}(1,1,2,3)$ of degree $6$.
\end{enumerate}
\end{theorem}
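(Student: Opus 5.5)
The plan is to pass to the minimal resolution and work with the weighted (or anticanonical) models given by the anticanonical ring. Let $\pi\colon \tilde X\to X$ be the minimal resolution. Since $X$ has du Val singularities, $K_{\tilde X}=\pi^*K_X$, so $-K_{\tilde X}$ is nef and big with $(-K_{\tilde X})^2=d$, and $H^0(X,-mK_X)=H^0(\tilde X,-mK_{\tilde X})$ for all $m$. By Riemann--Roch and Kawamata--Viehweg vanishing (valid in characteristic $0$; for a general perfect field we would instead argue over $\overline{\mathbb{K}}$ and descend, since all statements concern dimensions of cohomology and generation of graded rings, which are stable under flat base change), we get $h^0(-mK_X)=1+\tfrac{m(m+1)}{2}d$. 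In particular $h^0(-K_X)=d+1$. Also $\tilde X$ is a weak del Pezzo surface, hence rational (over $\overline{\mathbb{K}}$), and a general member of $|-K_{\tilde X}|$ is a curve of arithmetic genus $1$.

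The core step is restriction to an anticanonical curve, the standard Reid-type argument. Pick $C\in|-K_X|$, say general. By adjunction $\omega_C\cong\mathcal{O}_C$, and the sequence
\[
0\to \mathcal{O}_X((m-1)(-K_X))\to \mathcal{O}_X(-mK_X)\to \mathcal{O}_C(-mK_X)\to 0
\]
is exact on global sections because $H^1(-(m-1)K_X)=0$. On the genus-one curve $C$, the line bundle $L=-K_X|_C$ has degree $d$. The facts we need are:
\begin{itemize}
\item if $d\geqslant 3$, then $L$ is very ample and the section ring of $L$ is generated in degree $1$ (projective normality of elliptic normal curves), with the image a normal curve of degree $d$ whose ideal is generated by quadrics when $d\geqslant4$ and by a cubic when $d=3$;
\item if $d=2$, then $L$ is base point free, giving a double cover of $\mathbb{P}^1$;
\item if $d=1$, then $|L|$ is a single point $O$, and the ring is generated by sections in degrees $1,2,3$ with one relation in degree $6$ (Weierstrass form).
\end{itemize}
Lifting generators via the surjections above, together with the section $s$ cutting out $C$, gives generators of $R(X,-K_X)=\bigoplus_m H^0(-mK_X)$. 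Since $-K_X$ is ample, $X=\operatorname{Proj}R$. This yields (1) and (3): for $d\geqslant3$, $R$ is generated in degree $1$, so $\Phi$ is an embedding, and for $d=3$ the dimension count $h^0(-3K_X)=19$ against $20$ cubic monomials in $4$ variables gives a single cubic relation. It also yields (4) and (6): for $d=2$ we get generators $x,y,z$ in degree $1$ and $w$ in degree $2$, and comparing dimensions in degree $4$ forces one quartic relation in $\mathbb{P}(1,1,1,2)$; for $d=1$ we get generators of degrees $1,1,2,3$ and one sextic relation in $\mathbb{P}(1,1,2,3)$.

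Statement (2) follows because base points of $|-K_X|$ lie on $C$ and $L$ is base point free when $d\geqslant2$. For (5), $|-K_X|$ is a pencil, and two members meet in $(-K_X)^2=1$ point $O$ with intersection multiplicity $1$. Hence every member is smooth at $O$, and $O$ is a smooth point of $X$: at a du Val point two Cartier curves through it would have local intersection number at least $2$, or one would argue via $\pi^*$ and the fundamental cycle. Irreducibility of every member holds because $-K_X$ is primitive of degree $1$. Any decomposition $A+B$ into effective Weil divisors would have $-K_X\cdot A\geqslant1$ and $-K_X\cdot B\geqslant1$, since $-K_X$ is ample and the members are $\mathbb{Q}$-Cartier; this contradicts $-K_X\cdot(-K_X)=1$. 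Reducedness follows the same way.

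I expect the main obstacle to be that a general member $C$ might be singular or even non-reduced, which would invalidate the facts about genus-one curves used above. The first thing I would try is a Bertini argument: for $d\geqslant2$ the system $|-K_X|$ is base point free by the above, but proving that is circular. So I would follow Hidaka--Watanabe and first show that $|-K_{\tilde X}|$ contains a reduced irreducible member, using rationality and the fact that $-K_{\tilde X}$ is nef and big. I would then work with Gorenstein curves $C$ of arithmetic genus $1$, for which the needed statements about degree-$d$ line bundles (base point freeness for $d\geqslant2$, very ampleness and projective normality for $d\geqslant3$) still hold, via Riemann--Roch on Gorenstein curves.
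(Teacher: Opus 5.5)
The paper gives no proof of this statement; it is quoted from \cite{HW81}. Your strategy is a standard and workable one: restrict to one anticanonical curve $C$, use $H^1(X,-(m-1)K_X)=0$ to get surjectivity, and lift generators of the section ring of $-K_X|_C$. For $d=1$ it is essentially complete. Since $-K_X$ is ample and Cartier, $-K_X\cdot A$ is a positive integer for every curve $A$, so every member of $|-K_X|$ is integral, and the Weierstrass argument applies to any member. This gives (5) and (6).

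For $d\geqslant 2$, however, the whole argument rests on the existence of an integral member $C\in|-K_X|$, and that is exactly the step you leave open. Your sketch, ``using rationality and the fact that $-K_{\tilde X}$ is nef and big'', cannot suffice on its own. On a rational elliptic surface with fibre class $F$ and a section $O$, the divisor $2F+O$ is nef and big with $h^0=3$, yet $O$ is a fixed component of $|2F+O|$. So some input specific to the anticanonical class is needed, and, as you note yourself, Bertini is circular here. There are two standard ways to supply it.
\begin{itemize}
\item Use the explicit model of $\tilde X$: $\mathbb{P}^2$ blown up in $9-d$ points in almost general position, or $\mathbb{P}^1\times\mathbb{P}^1$ or $\mathbb{F}_2$. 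Then $|-K_{\tilde X}|$ is a system of plane cubics through the points, and one checks directly that it has no fixed part and is not composed with a pencil.
\item Bypass integrality. By the Hodge index theorem every $C\in|-K_{\tilde X}|$ is numerically $1$-connected. From $2p_a(B)-2=-B\cdot(C-B)$, every proper subcurve $B\subsetneq C$ then has $p_a(B)\leqslant 0$. Hence $\deg(-K_{\tilde X}|_B)\geqslant 2p_a(B)$ for all $B\subseteq C$ once $d\geqslant 2$, and the Catanese--Franciosi--Hulek--Reid criterion shows $-K_{\tilde X}|_C$ is base point free on \emph{every} member. Together with surjectivity of restriction, this is (2). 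Since $(-K_X)^2>0$, Bertini in characteristic $0$ then gives a smooth elliptic member missing $\mathrm{Sing}(X)$, and your lifting argument yields (1), (3) and (4).
\end{itemize}

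A secondary issue: the statement is over an arbitrary perfect field. Passing to $\overline{\mathbb{K}}$ removes non-closedness but not positive characteristic, where Kawamata--Viehweg vanishing and Bertini-type generic smoothness are unavailable. For instance, in characteristic $3$ every member of $|-K_X|$ on the du Val surface $w^2=z^3+x^5y$ in $\mathbb{P}(1,1,2,3)$ is cuspidal. The vanishing $H^1(\tilde X,-mK_{\tilde X})=0$ still holds there, but it needs a characteristic-free proof. For example, induct on $m$ from $h^1(\mathcal{O}_{\tilde X})=0$, restricting to a $1$-connected $C\in|-K_{\tilde X}|$ with $\omega_C\cong\mathcal{O}_C$, where $H^1(C,\mathcal{O}_C(-mK_{\tilde X}))\cong H^0(C,\mathcal{O}_C(mK_{\tilde X}))^{\vee}=0$. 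Likewise, the good member (or base point freeness) must be obtained without generic smoothness. In characteristic $0$, the only case the paper actually uses, your descent remark for non-closed fields is fine.
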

\begin{remark}
If in Theorem \ref{thm-del_pezzo_properties} the field $\mathbb{K}$ has characteristic $0$, then if $d=2$ we have that $\Phi$ is a~double cover of $\mathbb{P}^2$ that is branched over a possibly reducible quartic curve; $d = 1$, then $|-2K_X|$ defines a double cover $X \to \mathbb{P}(1,1,2)\subset \mathbb{P}^3$ branched over a sextic curve. 
\end{remark}

Recall that a \emph{weak del Pezzo surface} is a smooth surface such that $-K_Z$ is nef and big.

\begin{lemma}[{cf. \cite[Proposition 8.1.23]{Dol12}}]
\label{lem-blowup}
Let $X$ be a weak del Pezzo surface, and $f\colon Y \to X$ be the blow-up of $X$ at a point $P$ defined over a perfect field $\mathbb{K}$. If $P$ does not lie on a $(-2)$-curve and~${K_X^2 \geqslant 2}$ then $Y$ is a weak del Pezzo surface.
\end{lemma}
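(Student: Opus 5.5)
We need to prove Lemma \ref{lem-blowup}: if $X$ is a weak del Pezzo surface with $K_X^2\geqslant 2$ and $P$ is a point not on any $(-2)$-curve, then the blow-up $Y$ is weak del Pezzo.

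\emph{Bigness.} Since $K_Y^2=K_X^2-1\geqslant 1>0$, it suffices to show that $-K_Y$ is nef: a nef divisor with positive self-intersection is big. We have $-K_Y=f^*(-K_X)-E$, where $E$ is the exceptional curve. Then $-K_Y\cdot E=1>0$, so only curves $C\neq E$ need checking. For such a curve, write $\widetilde C=f^*C'-mE$, where $C'=f(C)$ and $m=\mathrm{mult}_P C'\geqslant 0$. This gives
\[
-K_Y\cdot \widetilde C=-K_X\cdot C'-m.
\]
So we must show $-K_X\cdot C'\geqslant \mathrm{mult}_P C'$ for every irreducible curve $C'\subset X$ through $P$.

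\emph{Reduction to an anticanonical divisor.} The plan is to use a divisor $D\in|-K_X|$ passing through $P$ and not containing $C'$. Since $-K_X$ is nef and big, Riemann--Roch together with Kawamata--Viehweg vanishing gives $h^0(-K_X)=K_X^2+1\geqslant 3$. Hence the members of $|-K_X|$ through $P$ form a linear system of dimension $\geqslant 1$. If some such $D$ does not contain $C'$, then
\[
-K_X\cdot C'=D\cdot C'\geqslant \mathrm{mult}_P D\cdot \mathrm{mult}_P C'\geqslant \mathrm{mult}_P C',
\]
and we are done.

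\emph{Main obstacle: $C'$ is a fixed component.} The hardest case is when $C'$ lies in every member of $|-K_X|$ through $P$. This is where both hypotheses are needed.

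First suppose $-K_X\cdot C'=0$. Then $C'$ is a $(-2)$-curve, which is impossible since $P$ lies on $C'$ and $P$ avoids all $(-2)$-curves.

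So assume $-K_X\cdot C'>0$. Pass to the anticanonical model $\bar X$, a du Val del Pezzo surface with $d\geqslant2$. By Theorem \ref{thm-del_pezzo_properties}, $|-K_{\bar X}|$ is base point free. Since $P$ is off the $(-2)$-curves, the morphism $X\to\bar X$ is an isomorphism near $P$, and $P$ maps to a smooth point of $\bar X$.

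Consider the map $\Phi$ given by $|-K_X|$. When $d\geqslant3$, $\Phi$ is an embedding near $P$. The members of $|-K_X|$ through $P$ then cut hyperplane sections through $\Phi(P)$, and these have no common component, since $\Phi(C')$ is not contained in every hyperplane through a point. So this case cannot occur.

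When $d=2$, $\Phi$ is a double cover of $\mathbb{P}^2$, and the members through $P$ are preimages of lines through $\Phi(P)$. Such lines have a common component only if $C'$ is a component of the preimage of every line through $\Phi(P)$, which is absurd.

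Hence $C'$ is not a fixed component, and $-K_Y$ is nef. If this case analysis becomes messy, the fallback is to bound the multiplicity directly. Take a general $D$ through $P$ and write $D=aC'+R$ with $R$ not containing $C'$. Then use $D\cdot C'=aC'^2+R\cdot C'$ together with the estimate for $R\cdot C'$ at $P$, and the genus bound $C'^2\geqslant \mathrm{mult}_P(C')(\mathrm{mult}_P(C')-1)-2+\dots$ from adjunction.
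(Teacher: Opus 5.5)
Your proof is correct. The paper gives no argument of its own here, only a reference to Dolgachev's Proposition 8.1.23, so yours is a self-contained replacement that also shows where each hypothesis enters.

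\emph{Role of the hypotheses.} The assumption $K_X^2\geqslant 2$ gives $K_Y^2>0$. It also gives base-point-freeness of $|-K_X|$: apply Theorem \ref{thm-del_pezzo_properties}(2) to the anticanonical model $\bar X$ and pull back along the crepant contraction $X\to\bar X$. The condition on $P$ rules out the only curves contracted by $\Phi$, namely those with $-K_X\cdot C'=0$. These are exactly the $(-2)$-curves, and you should say why: the Hodge index theorem gives $C'^2<0$, and adjunction gives $C'^2=2p_a(C')-2\geqslant -2$.

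\emph{Streamlining.} With these two facts the proof is uniform in $d\geqslant 2$. Take a hyperplane (a line if $d=2$) through $\Phi(P)$ not containing the curve $\Phi(C')$. It pulls back to some $D\in|-K_X|$ through $P$ not containing $C'$, so $-K_X\cdot C'=D\cdot C'\geqslant \mathrm{mult}_P C'$. The following parts of your write-up are therefore unnecessary:
\begin{itemize}
\item the Riemann--Roch/Kawamata--Viehweg dimension count;
\item the split into ``some $D$ avoids $C'$'' versus ``$C'$ is a fixed component'';
\item the remark that $\Phi$ is an embedding near $P$.
\end{itemize}

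\emph{The fallback.} Drop the ``fallback'' paragraph. As sketched it does not close: the adjunction bound $C'^2\geqslant m(m-1)-2+k$, with $k=-K_X\cdot C'<m=\mathrm{mult}_P C'$, yields a contradiction only when combined with the Hodge index inequality $k^2\geqslant K_X^2\, C'^2$. That inequality is also where $K_X^2\geqslant 2$ would enter on that route.

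\emph{The ground field.} Since the lemma is stated over a perfect field, add a sentence saying that nefness and bigness of $-K_Y$ can be checked after base change to $\overline{\mathbb{K}}$. Your argument then applies verbatim to the rational point $P$.
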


For a singular del Pezzo surface $X$ with du Val singularities over a perfect field $\mathbb{K}$ one can consider the minimal resolution of singularities $\pi \colon Z \to X$, where $Z$ is a weak del Pezzo surface. The following lemma allows to study coregularity of weak del Pezzo surfaces instead of the corresponding singular del Pezzo surfaces, and vice versa. 

\begin{lemma}
\label{lem-singweak}
Let $X$ be a singular del Pezzo surface $X$ with du Val singularities over a field $\mathbb{K}$ of~characteristic $0$, and $\pi \colon Z \to X$ be the minimal resolution of singularities. Then for any $l\geqslant 1$ one has $\coreg_l(X) = \coreg_l(Z)$, and $\coreg(X) = \coreg(Z)$.
\end{lemma}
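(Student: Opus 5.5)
The plan is to prove the two inequalities $\coreg_l(Z)\geqslant \coreg_l(X)$ and $\coreg_l(X)\geqslant \coreg_l(Z)$ separately, for each $l\geqslant 1$. Taking the minimum over $l$ then gives $\coreg(X)=\coreg(Z)$. For a non-closed field we work, as in Remark~\ref{rem-non-closed-field}, only with complements defined over $\mathbb{K}$. Since the minimal resolution is canonical, it is defined over $\mathbb{K}$, and all constructions below are $G$-equivariant.

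The first inequality, $\coreg_l(Z)\geqslant \coreg_l(X)$, is Lemma~\ref{lem-coregpush} applied to $\pi$. Concretely, the pushforward $\pi_*D$ of an $l$-complement $D$ on $Z$ is an $l$-complement on $X$. The pairs $(Z,D)$ and $(X,\pi_*D)$ are crepant birational, because $K_Z+D\sim_{\mathbb{Q}}0$ forces $\pi^*(K_X+\pi_*D)=K_Z+D$. Crepant birational pairs have PL-homeomorphic dual complexes by \cite[Proposition 11]{dFKX17}.

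For the reverse inequality, take an $l$-complement $D_X\in\frac1l|-lK_X|$ on $X$ with $\dim\mathcal{D}(X,D_X)=\mathrm{reg}_l(X)$. Since du Val singularities are canonical, $\pi$ is crepant: $K_Z=\pi^*K_X$. Therefore
\[
K_Z+\pi^*D_X=\pi^*(K_X+D_X)\sim_{\mathbb{Q}}0,
\]
and in fact $l(K_Z+\pi^*D_X)\sim 0$, because $lD_X\in|-lK_X|$ is Cartier and $\pi^*$ preserves linear equivalence. Set $D_Z:=\pi^*D_X$. This divisor is effective, with $lD_Z\in|-lK_Z|$, and $(Z,D_Z)$ is crepant to the lc pair $(X,D_X)$. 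Hence all coefficients of $D_Z$ are at most $1$, since they are $1$ minus log discrepancies of $(X,D_X)$, which are non-negative, and $(Z,D_Z)$ is lc. So $D_Z$ is an $l$-complement on $Z$. Because crepant birational pairs have the same dual complex (\cite[Proposition 11]{dFKX17} again), $\dim\mathcal{D}(Z,D_Z)=\dim\mathcal{D}(X,D_X)$, and therefore $\mathrm{reg}_l(Z)\geqslant\mathrm{reg}_l(X)$.

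I expect the main subtle point to be checking that $D_Z=\pi^*D_X$ is effective with coefficients in $[0,1]$, so that it is a genuine boundary and an honest $l$-complement. Effectivity follows from crepancy together with the fact that the pullback of an effective $\mathbb{Q}$-Cartier divisor is effective. The bound by $1$ follows from log canonicity of $(X,D_X)$. It is precisely here that du Val singularities are needed: for non-canonical singularities, $\pi^*K_X-K_Z$ would give exceptional divisors with negative coefficients in the crepant transform.
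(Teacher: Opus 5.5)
Your proof is correct and follows the paper's argument. The inequality $\mathrm{coreg}_l(Z)\geqslant\mathrm{coreg}_l(X)$ comes from pushing forward (Lemma~\ref{lem-coregpush}), and the reverse comes from the crepant pullback $\pi^*D_X$ ($K_Z=\pi^*K_X$), with your checks of effectivity, coefficients, $l(K_Z+\pi^*D_X)\sim 0$ and $\mathbb{K}$-rationality making explicit what the paper leaves implicit. One correction to your closing remark, which is backwards: on the minimal resolution of a normal surface $K_Z$ is $\pi$-nef, so $\pi^*K_X-K_Z$ is effective by the negativity lemma, and non-canonical singularities would therefore add \emph{positive}, not negative, exceptional coefficients to the crepant transform (negative ones arise only on non-minimal resolutions).
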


\begin{proof}
For any $l\geqslant 1$ one has $\coreg_l(Z) \geqslant \coreg_l(X)$ by Lemma \ref{lem-coregpush}. Moreover, for any Calabi--Yau pair $(X,D)$ on $X$ the pair $(Z,f^*(D))$ is a Calabi--Yau pair since the singularities of~$X$ are canonical. Thus the dual complexes $\mathcal{D}(X, D)$ and $\mathcal{D}(Z, f^*(D))$ are homeomorphic and $\coreg_l(Z) \leqslant \coreg_l(X)$. So we have $\coreg_l(Z) = \coreg_l(X)$, and $\coreg(X) = \coreg(Z)$.
\end{proof}

The configurations of du Val singularities that can appear on a del Pezzo surface defined over an~algebraically closed field of arbitrary characteristic were described in \cite{St21}.

\section{Del Pezzo surfaces over $\mathbb{C}$}
\label{sec-dp-over-c}
In this section, we are going to compute the coregularity of del Pezzo surfaces with du Val singularities defined an algebraically closed field $\mathbb{K}$ of characteristic $0$. By~Lefschetz principle we mas assume that $\mathbb{K} = \mathbb{C}$.

\begin{lemma}
\label{lem-dp2}
Let $X$ be a du Val del Pezzo surface of degree $d\geqslant 2$. Then $\mathrm{coreg}(X)=\mathrm{coreg}_1(X)=0$.
\end{lemma}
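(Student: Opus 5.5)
Since $\mathrm{coreg}(X)=\min_l \mathrm{coreg}_l(X)$ and all values lie in $\{0,1,2\}$, it suffices to exhibit a single $1$-complement $D\in|-K_X|$ such that $(X,D)$ is log canonical and $\dim\mathcal{D}(X,D)=1$. By Lemma~\ref{lem-singweak} we may equivalently work on the minimal resolution $\pi\colon Z\to X$, where $-K_Z=\pi^*(-K_X)$. On $Z$ we look for a member of $|-K_Z|$ that is a reduced curve with at worst nodes: either an irreducible nodal rational curve (its dual complex is a loop, hence $1$-dimensional) or a cycle of smooth rational curves meeting transversally. Because $X$ has canonical singularities, pulling back a complement gives a complement (as in the proof of Lemma~\ref{lem-singweak}). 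So it is enough to find a nodal or cycle-type member of $|-K_X|$ whose behaviour at the singular points of $X$ is still lc with a $1$-dimensional dual complex. The simplest choice is a member avoiding $\mathrm{Sing}(X)$ entirely, or a member through a singular point whose pullback to $Z$ contains the $(-2)$-chain as part of a cycle.

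The concrete construction uses Theorem~\ref{thm-del_pezzo_properties}: for $d\geqslant 2$ the system $|-K_X|$ is base point free. For $d\geqslant3$, $X\subset\mathbb{P}^d$ is anticanonically embedded and $|-K_X|$ consists of hyperplane sections. For $d=2$, $X\to\mathbb{P}^2$ is a double cover branched in a quartic $B$.
\begin{itemize}
\item For $d=2$: take a line $L\subset\mathbb{P}^2$ that avoids the images of the singular points and is tangent to $B$ at exactly one smooth point of $B$ with multiplicity two, meeting $B$ transversally elsewhere. Such a line exists because a general tangent line to a reduced quartic (not a union of lines through one point) is simply tangent. Its preimage is an irreducible rational curve with a single node. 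If $B$ is a union of lines or conics, take instead a line through a node of $B$ to obtain two components meeting in two points, which forms a cycle.
\item For $d\geqslant 3$: take a general smooth point $p\in X$ and the hyperplane sections singular at $p$, i.e. the tangent hyperplanes at $p$. These form a linear system of dimension $d-3\geqslant0$, and a general member has an ordinary node at $p$. It avoids $\mathrm{Sing}(X)$ if $d\geqslant 4$ (by a dimension count), and even for $d=3$ a general tangent plane section of a cubic surface at a general point is an irreducible nodal cubic not passing through the singularities.
\end{itemize}
In each case $(X,D)$ is lc, $K_X+D\sim 0$, and the dual complex is a circle. Hence $\mathrm{reg}_1(X)=1$ and $\mathrm{coreg}_1(X)=0$, so $\mathrm{coreg}(X)=0$.

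An alternative uniform route, if the case analysis gets messy, is degeneration to a toric surface. Alternatively one can use Lemma~\ref{lem-coregpush} by realising $Z$ as a blow-up: first handle weak del Pezzo surfaces that are toric or close to toric, then argue by induction on blow-ups using Lemma~\ref{lem-blowup}. The point is to keep a cycle of curves in $|-K|$ by blowing up nodes of the cycle or points on it.

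The main obstacle is ensuring the nodal member is genuinely nodal and not cuspidal, and that it avoids or behaves well at the singular points, particularly in low degree $d=2,3$ with many singularities (e.g. $E_7$ on a degree $2$ surface, or $E_6$ on a cubic). There the branch quartic or the tangent sections can be very special. I would first try the tangent-line argument for $d=2$, including the case of a cuspidal branch curve, by checking that the dual curve of $B$ is not entirely made of flex-type tangents, which holds in characteristic $0$. In the worst configurations I would fall back to an explicit equation and a hand-picked cycle of lines or conics through the singular point.
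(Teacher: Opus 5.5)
Your degree-$2$ argument is the paper's: a general tangent line to a component of degree $\geqslant 2$ of the branch quartic $B$, and a line through a node when $B$ is four lines. However, the four-lines case is misdescribed. A general line $L$ through a single node $p$ of $B$ meets $B$ in $2p+q_1+q_2$, so $g^*L$ is \emph{irreducible}. Its normalization is the double cover of $L$ branched at $q_1,q_2$, and its only singularity is a node at the $A_1$-point over $p$. Two components meeting in two points would require a line through two nodes.

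Because this member passes through $\mathrm{Sing}(X)$, you must check lc-ness there. On $Z$ one has $\pi^*g^*L=\widetilde{D}+E$, where $E$ is the $(-2)$-curve and $\widetilde{D}$ is smooth and meets $E$ transversally at two points. So the pair is lc and the dual complex is still a circle.

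You also need to exclude four concurrent lines, where there is neither a node nor a tangent line. The paper does this by noting that $w^2=xy(x-y)(x-ay)$ is not du Val. Your ``or conics'' fallback is unnecessary and can fail (two bitangent conics have no node), but a tangent line to a conic component already covers that case.

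For $d\geqslant 3$ you take a genuinely different route. The paper constructs nothing on $X$ itself. It blows up $d-2$ general points, obtains a du Val del Pezzo surface of degree $2$ with a birational morphism to $X$, and invokes Lemma~\ref{lem-coregpush}, so a single degree-$2$ construction suffices. You instead use a tangent hyperplane section at a general point. This gives an explicit complement on $X$ and is exactly the mechanism of Section~4 (Lemma~\ref{lem-dens3itd}). There it is the natural choice, because a $\mathbb{K}$-point yields a $\mathbb{K}$-rational complement.

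The price is that your key claims are only asserted. For $d=3$ you need three facts:
\begin{enumerate}
\item a general point is non-parabolic, i.e.\ $X\not\subset\mathrm{He}(X)$;
\item no line of $X$ passes through it, so the section is irreducible;
\item the section misses $\mathrm{Sing}(X)$, because a plane section through a singular point of $X$ is singular there, while an irreducible nodal cubic has only one singular point.
\end{enumerate}
For $d\geqslant 4$, the crux is the claim that a general hyperplane containing $T_pX$ cuts out a curve with an ordinary node at $p$ and no other bad singularities. The easiest proof is to impose passage through $d-3$ further general points. This reduces to the tangent section of the cubic surface obtained by blowing those points up, which is the paper's blow-up reduction again.
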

\begin{proof}
We can blow up $d-2$ general points on $X$ to obtain a del Pezzo surface $Y$ of degree $2$ with du~Val singularities together with a morphism $h\colon Y\to X$, cf. Lemma \ref{lem-blowup}. 
Hence, using Lemma \ref{lem-coregpush} we may assume that the degree $(-K_X)^2=2$ from the start. 
By Theorem \ref{thm-del_pezzo_properties}, the linear system~$|-K_X|$ gives a double cover $g\colon X\to \mathbb{P}^2$ ramified along a reduced quartic curve $C\subset \mathbb{P}^2$. Assume that $C$ is not the union of four lines. Then $C$ has a component $C'$ of degree at least $2$. Consider a general tangent line $L$ to $C'$. We claim that $g^*L\sim -K_X$ and $\dim \mathcal{D}(X, g^*L) = 1$. Indeed, this follows from the fact that $g^*L$ is a nodal anti-canonical curve which on $X$ does not pass through singularities of $X$ (because $L$ does not pass through singularities of $C$).  

Consider the case when $C$ is a union of four lines. Note that the case when $C$ is four lines intersecting at one point is not possible. Indeed, in this case we would have a singular point of~the~form 
\[
w^2 + xy(x-y)(x-ay) = 0, \quad \quad \quad a\in \mathbb{C},
\]
which is not du Val. 
We can pick a general line $D$ passing through exactly one nodal point of~$C$. Then $g^*L$ is an irreducible a rational curve with one node and $g^*L\sim -K_X$. 
It follows that $\mathrm{coreg}(X)=\mathrm{coreg}_1(X)=0$.
\end{proof}

Now we deal with the case of del Pezzo surfaces of degree $1$ with du Val singularities. 

\begin{remark}
Note that a general element $C$ in the linear system $|-K_X|$ is a smooth elliptic curve, so $\dim \mathcal{D}(X,C)=0$. This implies that $0\leqslant \mathrm{coreg}_1(X)\leqslant 1$ and $0\leqslant \mathrm{coreg}(X)\leqslant 1$.
\end{remark}

We fix the notation that we will use until the end of this section.

\begin{correspondence}
\label{notation-dp-elliptic-surface}
Let $X$ be a del Pezzo surface with du Val singularities. 
The~linear system $|-K_X|$ has one basepoint $P$ which is a non-singular point on $X$, see Theorem \ref{thm-del_pezzo_properties}. Blowing up this point we obtain a surface $X'$ together with a morphism $f\colon X'\to X$. The linear system~$|-K_{X'}|$ defines a~morphism $\phi\colon X'\to \mathbb{P}^1$ whose general fiber is an elliptic curve. We denote the minimal resolution of singularities of $X'$ by $Y$. 
Put $h\colon Y\to X$. Consider the induced morphism $\phi_Y\colon Y\to \mathbb{P}^1$. Note that $\phi_Y$ is an elliptic fibration with a section, the section being the $f$-exceptional $(-1)$-curve. 
Note that $\phi_Y$ is relatively minimal. 
We summarize the notation in the following diagram (here $\pi\colon Z\to X$ is the minimal resolution of $X$):
\begin{equation}
\label{eq-main-diagram}
\begin{tikzcd}
 Y \ar[d] \ar[dr,"h"] \ar[r] & Z \ar[d, "\pi"] \\
 X' \ar[d, "\phi"] \ar[r,"f"] & X \ar[dl, dashed] \\
 \mathbb{P}^1 &  
\end{tikzcd}
\end{equation}
\end{correspondence}

\begin{lemma}
\label{lem-red-irred-fibers}
Let $X$ be a del Pezzo surface of degree $1$ with at worst du Val singularities. 
The~elements in $|-K_X|$ are reduced and irreducible curves. 
For a given curve $C\in |-K_X|$, at most one singular point of $X$ belongs to $C$. 
\end{lemma}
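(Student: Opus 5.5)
The plan is to use only the intersection theory of the ample Cartier class $-K_X$ with $K_X^2=1$, together with the local structure of du Val singularities.

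\emph{Reducedness and irreducibility.} Let $C\in|-K_X|$ and write $C=\sum a_iC_i$ with $a_i\geqslant 1$ integers and $C_i$ prime divisors. Each $C_i$ is a Weil divisor, but $-K_X$ is Cartier and ample, so $-K_X\cdot C_i$ is a positive integer. Indeed, $-K_X\cdot C_i=\deg(\mathcal{O}_X(-K_X)|_{C_i})\in\mathbb{Z}_{>0}$. Then
\[
1=(-K_X)^2=-K_X\cdot C=\sum a_i(-K_X\cdot C_i)\geqslant \sum a_i .
\]
This forces a single component with multiplicity one, so $C$ is reduced and irreducible. (Alternatively one could pull back to $Z$ via $\pi$, but the argument on $X$ is cleaner.)

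\emph{At most one singular point.} Pass to the minimal resolution $\pi\colon Z\to X$. Since $X$ has canonical singularities, $\pi^*C=\widetilde C+\sum_j E_j'$, where $\widetilde C$ is the strict transform and $E_j'$ is an effective $\mathbb{Z}$-divisor supported on the exceptional $(-2)$-curves over the singular points $p_j\in C$. Each $E_j'$ is nonzero when $p_j\in C$, because the pullback of a Cartier divisor through $p_j$ contains every exceptional curve over $p_j$ with positive (integer) coefficient. Also $-K_Z=\pi^*(-K_X)\sim\pi^*C$, and $-K_Z$ is trivial on exceptional curves. Intersecting $\pi^*C$ with itself gives
\[
1=(\pi^*C)^2=\pi^*C\cdot\widetilde C=\widetilde C^2+\sum_j E_j'\cdot\widetilde C .
\]
Since $E_j'$ is exceptional, $\pi^*C\cdot E_j'=0$, so $E_j'\cdot\widetilde C=-(E_j')^2$. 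Because the intersection form on the exceptional curves is negative definite and even, $(E_j')^2\leqslant -2$ for each $j$. Hence $\widetilde C^2=1-\sum_j(-(E_j')^2)\leqslant 1-2k$, where $k$ is the number of singular points on $C$.

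Bounding $\widetilde C^2$ from below is the main obstacle, and I would handle it by adjunction on $Z$. We have $-K_Z\cdot\widetilde C=-K_X\cdot C=1$, and $p_a(\widetilde C)\geqslant 0$ since $\widetilde C$ is irreducible, so
\[
\widetilde C^2=2p_a(\widetilde C)-2+1\geqslant -1 .
\]
Combining the two bounds gives $1-2k\geqslant -1$, that is, $k\leqslant 1$. Moreover $p_a(C)=1$, so when $k=1$ one gets $p_a(\widetilde C)=0$.

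An alternative route is to use the base point $O$ from Theorem~\ref{thm-del_pezzo_properties}. There, $C$ would correspond to a fiber of $\phi_Y$ containing the exceptional chains, and the count would go through the Kodaira classification. The adjunction argument avoids this and is the one I would write.
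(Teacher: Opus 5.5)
Your proof is correct, and for the second claim it takes a genuinely different route from the paper. The first claim is handled exactly as in the paper, via $-K_X\cdot C=1$.

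The paper argues through the double cover $g\colon X\to\mathbb{P}(1,1,2)$ given by $|-2K_X|$. Each $C\in|-K_X|$ maps onto a ruling $F$ of the cone. The branch curve $R$ avoids the vertex and satisfies $R\cdot F=3$. Singular points of $X$ lie over singular points of $R$, and each such point contributes local intersection multiplicity at least $2$ with $F$, so $F$ can contain at most one of them.

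You instead work intrinsically on the minimal resolution. Evenness and negative definiteness of the lattice spanned by the exceptional $(-2)$-curves give $\widetilde C^2\leqslant 1-2k$. Adjunction with $-K_Z\cdot\widetilde C=1$ gives $\widetilde C^2\geqslant -1$, and together these force $k\leqslant 1$.

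The paper's argument is quicker once the weighted projective model is available, and it matches the double-cover computations used later (e.g.\ in Proposition~\ref{prop-irreducible-coreg}). However, it tacitly uses several facts about that model: that $R$ misses the vertex, that $F\not\subset R$, and that singular points of $X$ are exactly the preimages of singular points of $R$. The double-cover-with-branch-curve picture also needs characteristic different from $2$.

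Your argument needs only $K_X^2=1$, that $-K_X$ is Cartier, and that the minimal resolution is crepant with exceptional $(-2)$-curves, so it is independent of the double cover. It also gives extra information for free. When $C$ passes through a singular point, $\widetilde C$ is a smooth rational curve with $\widetilde C^2=-1$, and the exceptional part $E'$ satisfies $(E')^2=-2$ and $\widetilde C\cdot E'=2$. This is consistent with the fiber types listed in Table~1.
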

\begin{proof}
Let $C\in |-K_X|$. The first claim follows either from Theorem \ref{thm-del_pezzo_properties}, or from the equation~${C(-K_X)=(-K_X)^2=1}$. 
We prove the second claim. 
Consider the double cover
$$
g\colon X\to \mathbb{P}(1,1,2)\subset \mathbb{P}^3
$$
given by the linear system $|-2K_X|$. Then $g$ is ramified along a curve $R\subset\mathbb{P}(1,1,2)$ of arithmetic genus $4$ which is the intersection of a quadric cone $\mathbb{P}(1,1,2)$ and a cubic surface in $\mathbb{P}^3$. Note that~$R$ does not pass through the vertex of $\mathbb{P}(1,1,2)$, and the singular points of $X$ correspond to the singular points of $R$. The intersection number of $C$ with a ruling $F$ of the cone $\mathbb{P}(1,1,2)$ is equal to $3$, hence at most one singular point of $C$ belongs to $F$. The map $g$ restricted to any element of~$|-K_X|$ induces a double cover of a ruling $F$ of $\mathbb{P}(1,1,2)$ ramified in $F\cap C$ and the vertex of the cone, and the claim follows. 
\end{proof}

We recall the correspondence between singular points on $X$ and singular fibers of $\phi_Y$ (here $A_0$ denotes a smooth point), see e.g. \cite[Table 3]{St21}. We use the notation of the Kodaira classification of singular fibers of elliptic fibrations, see e.g. \cite[§V.7, V.10]{BHPVdV04}

\

\begin{center}
\begin{tabular}{ | m{5.7em} | m{0.8cm} | m{2cm} | m{0.8cm} | m{0.8cm} | m{0.8cm} | m{0.8cm} | m{2cm} | m{0.8cm} | m{0.8cm} | m{0.8cm} | m{0.8cm}} 
  \hline
  Kodaira type & $I_0$ & $I_n$, ${n\geqslant 1}$ & $II$ & $III$ & $IV$ & $I_0^*$ & $I_n^*$, ${n \geqslant 1}$ & $IV^*$ & $III^*$ & $II^*$\\ 
  \hline
  du Val sing.& $A_0$ & $A_{n-1}$ & $A_0$ & $A_1$ & $A_2$ & $D_4$ & $D_{4+n}$ & $E_6$ & $E_7$ & $E_8$ \\ 
  \hline
    $1-\mathrm{lct}(Y; F)$ & $0\rule[-12pt]{0pt}{32pt}$ & $0\rule[-12pt]{0pt}{32pt}$ & $\dfrac{1}{6}$ & $\dfrac{1}{4}$ & $\dfrac{1}{3}$ & $\dfrac{1}{2}$ & $\dfrac{1}{2}$ & $\dfrac{2}{3}$ & $\dfrac{3}{4}$ & $\dfrac{5}{6}$ \\ 
  \hline
   $\chi(F)$ & $0$ & $n$ & $2$ & $3$ & $4$ & $6$ & $6+n$ & $8$ & $9$ & $10$ \\ 
  \hline
   $j(F)$ & $\in \mathbb{C}$ & $\infty$ & $0$ & $1728$ & $0$ & $\in \mathbb{C}$ & $\infty$ & $0$ & $1728$ & $0$  \\ 
  \hline
\end{tabular}
\end{center}
\begin{center}

\medskip

Table 1. The correspondence between singular fibers of $\phi_Y$ and singular points of $X$
\end{center}

\medskip

\begin{remark}
\label{rem-notbijection}
Note that for a given type of singular fiber of $\phi_Y$ there is a unique type of du Val singularity on $X$. The converse is not true for the singularities $A_0$, $A_1$, $A_2$.
More precisely, a smooth point $A_0$ corresponds to the fibers of type $I_0$, $I_1$, and $II$, a singularity $A_1$ corresponds to the fibers of type $I_2$ and $III$, and a singularity $A_2$ corresponds to the fibers of types $I_3$ and $IV$. For the other types of singularities on $X$ there is a unique corresponding type of fiber on $Y$.
\end{remark}

\begin{lemma}
\label{lem-fiberscoreg}
Let $X$ be a del Pezzo surface of degree $1$ with at worst du Val singularities. 
One has~${\coreg_1 (X) = 0}$ if and only if $\phi_Y$ has a fiber of type $I_n$, $n \geqslant 1$.

Moreover, if $\phi_Y$ has a fiber of type $I_n$ or $I_n^*$, where $n \geqslant 1$, then $\coreg(X) = 0$.
\end{lemma}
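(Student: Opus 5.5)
By Lemma~\ref{lem-singweak}, $\coreg_l(X)=\coreg_l(Z)$ for every $l$, so I will compute coregularity on $Z$ and transport it to $Y$. The plan has three steps: first describe every $1$-complement of $X$, then relate its dual complex to the fibers of $\phi_Y$, and finally treat the $2$-complement needed for the second claim.

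\textbf{Step 1: the $1$-complements.} Since $|-K_X|$ is a pencil, every $1$-complement is a single curve $C\in|-K_X|$. By Lemma~\ref{lem-red-irred-fibers}, $C$ is reduced and irreducible. We have $p_a(C)=1$, and $C$ passes through at most one singular point of $X$.
\begin{itemize}
\item If $C$ passes through no singular point, then $(X,C)$ is lc with a $0$-dimensional dual complex unless $C$ is nodal. A nodal $C$ gives a dual complex equal to a circle, of dimension $1$.
\item If $C$ passes through a singular point, I pull back to $Z$. The pair $(Z,\pi^*C)$ is crepant to $(X,C)$.
\end{itemize}

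\textbf{Step 2: comparison with the fibers of $\phi_Y$.} Blowing up the base point $P$, which is a smooth point of $C$, is crepant for the pair $(X,C)$ once we add the exceptional section with coefficient $0$. Indeed, $K_{X'}+\tilde C=f^*(K_X+C)$ because $C$ is smooth at $P$. Passing to $Y$, the crepant pullback of $\tilde C$ is exactly the fiber $F=\phi_Y^*(\text{pt})$ with its Kodaira multiplicities. Hence $(X,C)\simeq_{\rm cbir}(Y,F)$.

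Such a pair is lc only if $F$ is reduced with normal crossings after resolution. This holds exactly for $I_0$, where the dual complex is a point, and for $I_n$ with $n\geqslant1$, where $F$ is a cycle of rational curves and the dual complex is $S^1$. For the types $II$, $III$, $IV$ the pair is not lc, since the lct is less than $1$ by Table~1. For the starred fibers there are non-reduced components, so coefficients exceed $1$ and the pair is again not lc.

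Therefore $\coreg_1(X)=0$ if and only if some fiber of $\phi_Y$ is of type $I_n$ with $n\geqslant1$. This proves the first claim.

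\textbf{Step 3: the $I_n^*$ case.} For the second claim, it remains to handle a fiber $F$ of type $I_n^*$. I will produce a $2$-complement. Write $F=\sum m_iE_i$, where the $m_i$ are $1$ on the four end curves and $2$ on the chain. The divisor $\tfrac12 F$ is a $\mathbb{Q}$-complement of $K_Y$ that is crepant to $\tfrac12 C$ on $X$. Here $2\cdot\tfrac12 C\sim -2K_X$, so it is a $2$-complement.

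This complement has coefficient $1$ on the chain $E_{\text{chain}}$ of multiplicity-$2$ curves and $\tfrac12$ on the end curves. I still need to check that it is lc. This is the only genuinely local computation.
\begin{itemize}
\item The chain components form a snc chain with coefficient $1$.
\item Each end curve meets the chain transversally at one point, with coefficient $\tfrac12$.
\item The resulting boundary is snc with all coefficients at most $1$, hence lc.
\end{itemize}

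To see that the dual complex has dimension $1$, I need two coefficient-$1$ components meeting.
\begin{itemize}
\item For $n\geqslant1$, the chain has $n+1\geqslant2$ curves, so consecutive chain curves intersect and give an edge. Hence $\dim\mathcal D=1$ and $\coreg_2(X)=0$.
\item For $n=0$, i.e.\ type $I_0^*$, there is only one central curve and the dual complex would be a point. This is why the statement excludes $I_0^*$.
\end{itemize}

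Finally, the crepant transform of this complement to $X$ is supported on $C$. Because $\coreg$ is a minimum over $l$, we get $\coreg(X)=0$.

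I expect the main point needing care to be the precise crepant comparison between $(X,C)$ and $(Y,F)$, including the claim that the lc-ness of $(X,tC)$ is governed by the fiber. This amounts to verifying that $h^*C=F+(\text{section with coefficient }0)$, which follows from $C$ being smooth at the base point and $Y\to X'$ being crepant.
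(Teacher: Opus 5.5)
Your Steps 1 and 2 follow the paper's argument for the first claim. Every $1$-complement is a curve $C\in|-K_X|$, which is crepant to a fiber $F$ of $\phi_Y$, and only fibers of type $I_n$, $n\geqslant 1$, give an lc pair with one-dimensional dual complex. One small inaccuracy: your first bullet in Step 1 says that a cuspidal $C$ away from $\mathrm{Sing}(X)$ gives a $0$-dimensional dual complex. In fact $(X,C)$ is not lc in that case. Step 2 states this correctly, so the first claim stands.

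\textbf{The gap is in Step 3: $\tfrac12 F$ is not a complement.} On $Y$ you have $-K_Y\sim F$, so $K_Y+\tfrac12F\sim_{\mathbb{Q}}-\tfrac12F\not\sim_{\mathbb{Q}}0$. On $X$, $2\cdot\tfrac12C=C\sim -K_X$, not $-2K_X$, so $K_X+\tfrac12C\sim_{\mathbb{Q}}\tfrac12K_X$. The two pairs are not crepant either. Since $K_Y=h^*K_X+E$ and $h^*C=F+E$, where $E$ is the section, one gets $h^*(K_X+\tfrac12C)=K_Y+\tfrac12F-\tfrac12E$. For the same reason, your closing formula is off: $E$ appears in $h^*C$ with coefficient $1$, and it is the $E$ in $K_Y$ that cancels it in the crepant transform.

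\textbf{The fix.} You are missing the other half of the boundary. Take $C'\in|-K_X|$ general, so that $C'$ is a smooth elliptic curve avoiding $\mathrm{Sing}(X)$ and corresponds to a smooth fiber $F'\neq F$. Then $\tfrac12(C+C')\in\tfrac12|-2K_X|$, and $h^*(K_X+\tfrac12C+\tfrac12C')=K_Y+\tfrac12F+\tfrac12F'$. This is exactly the pair $(Y,\tfrac12F+\tfrac12F')$ used in the paper. Since $F'$ is smooth, disjoint from $F$, and has coefficient $\tfrac12$, it adds nothing to the coefficient-one part. Your local analysis therefore goes through unchanged: the support is snc, and coefficient $1$ occurs exactly on the chain of $n+1\geqslant 2$ double components. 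This gives $\dim\mathcal{D}=1$ and $\coreg_2(X)=0$. With this correction your argument coincides with the paper's.
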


\begin{proof}
Note that for any $D \in |-K_X|$ there is a fiber $F$ of $\phi_Y$ such that $D = h(F)$ where $h$ is as in~\eqref{eq-main-diagram}. 
Therefore $\coreg_1 (X)$ is completely determined by the types of fibers of $\phi_Y$, see Remark~\ref{rem-notbijection}. It is easy to see that for a fiber $F$ of $\phi_Y$ one has $\dim \mathcal{D}(Y,F) = \dim \mathcal{D}(X,h(F)) = 1$ if and only if~$F$ has type $I_n$, $n \geqslant 1$. Therefore $\coreg_1 (X) = 0$ if and only if $\phi_Y$ has a fiber of type $I_n$, $n \geqslant 1$.

If $\phi_Y$ has a fiber $F$ of type $I_n^*$, where $n \geqslant 1$, then for a general smooth fiber $F'$ the pair $(Y, \frac{1}{2}F + \frac{1}{2}F')$ is log canonical, and $\dim \mathcal{D}(Y, \frac{1}{2}F + \frac{1}{2}F') = 1$. Thus, $\mathrm{coreg}(X)=0$. 
This proves the second claim.
\end{proof}

\begin{corollary}
\label{cor-a-d-0}
If $X$ has a singular point of type $A_n$ for $n\geqslant 3$ or $D_m$ for $m\geqslant 5$ then $\coreg(X) =~0$. 
\end{corollary}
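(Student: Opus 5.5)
The corollary follows directly from Lemma~\ref{lem-fiberscoreg} once Table~1 is used to pass from singular points of $X$ to singular fibers of $\phi_Y$. Consider the first claim of Lemma~\ref{lem-fiberscoreg}: it says $\coreg_1(X)=0$ exactly when $\phi_Y$ has an $I_n$ fiber with $n\geqslant 1$. Its second claim gives $\coreg(X)=0$ whenever $\phi_Y$ has a fiber of type $I_n$ or $I_n^*$ with $n\geqslant 1$. So it suffices to show that the listed singularities force such a fiber.

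First, suppose $X$ has a singular point $p$ of type $A_n$ with $n\geqslant 3$. By Lemma~\ref{lem-red-irred-fibers} and the correspondence recalled before Table~1, there is a unique element $C\in|-K_X|$ through $p$. The fiber $F$ of $\phi_Y$ with $h(F)=C$ has a Kodaira type whose associated du Val singularity is $A_n$. By Table~1 and Remark~\ref{rem-notbijection}, the only fiber type corresponding to $A_n$ with $n\geqslant 3$ is $I_{n+1}$. The ambiguous cases $A_0,A_1,A_2$, which can also come from $II,III,IV$, are excluded by the hypothesis. Hence $F$ is of type $I_{n+1}$ with $n+1\geqslant 4$, and Lemma~\ref{lem-fiberscoreg} gives $\coreg(X)=0$, and even $\coreg_1(X)=0$.

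Next, suppose $X$ has a point of type $D_m$ with $m\geqslant 5$. By Table~1 and Remark~\ref{rem-notbijection}, the corresponding fiber is uniquely of type $I_{m-4}^*$ with $m-4\geqslant 1$. The second part of Lemma~\ref{lem-fiberscoreg} then gives $\coreg(X)=0$. Concretely, $(Y,\tfrac12 F+\tfrac12 F')$ is a $2$-complement with one-dimensional dual complex, and Lemma~\ref{lem-coregpush} transfers this to $X$.

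The only point needing care is the justification that the singular points of $X$ correspond to the fibers of $\phi_Y$ as in Table~1, i.e.\ that the resolution of $p$ together with the strict transform of $C$ forms the fiber. This is the standard correspondence cited from \cite{St21}, so I would simply invoke it.
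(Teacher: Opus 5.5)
Your proof is correct and is essentially the paper's own argument: via Table~1 and Remark~\ref{rem-notbijection}, an $A_n$ point with $n\geqslant 3$ forces an $I_{n+1}$ fiber and a $D_m$ point with $m\geqslant 5$ forces an $I^*_{m-4}$ fiber, and Lemma~\ref{lem-fiberscoreg} then gives $\coreg(X)=0$. One small citation fix: the member of $|-K_X|$ through $p$ is unique because $|-K_X|$ is a pencil whose only base point is a smooth point of $X$; Lemma~\ref{lem-red-irred-fibers} is not what gives this.
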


\begin{proof}
If $X$ has a singular point of type $A_n$ for $n\geqslant 3$ or $D_m$ for $m\geqslant 5$ then $\phi_Y$ has a fiber of~type~$I_{n+1}$ or $I_{m-4}^*$ respectively, see Remark \ref{rem-notbijection}. Therefore $\coreg(X) = 0$ by Lemma~\ref{lem-fiberscoreg}.
\end{proof}

\begin{proposition}
\label{prop-isotrivial}
Let $X$ be a del Pezzo surface of degree $1$ with at worst du Val singularities. 
Then $\coreg(X) > 0$ implies that $\phi_Y$ is isotrivial.

If $\phi_Y\colon Y\to \mathbb{P}^1$ is isotrivial then $\coreg_1(X)=1$. 
\end{proposition}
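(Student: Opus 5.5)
We prove the second statement first, then the first, both through the fiber types of $\phi_Y$ listed in Table~1.

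\textbf{Isotriviality gives $\coreg_1(X)=1$.} Suppose $\phi_Y$ is isotrivial. Then the $j$-function of $\phi_Y$ is constant on $\mathbb{P}^1$, so in particular it never takes the value $\infty$. By Table~1 the fibers of types $I_n$ and $I_n^*$ with $n\geqslant 1$ are exactly the fibers with $j=\infty$, so $\phi_Y$ has no fiber of type $I_n$ with $n\geqslant 1$. Lemma~\ref{lem-fiberscoreg} then gives $\coreg_1(X)\neq 0$. Since $0\leqslant\coreg_1(X)\leqslant 1$, we conclude $\coreg_1(X)=1$.

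\textbf{Positive coregularity gives isotriviality.} We argue by contraposition: assume $\phi_Y$ is not isotrivial and show $\coreg(X)=0$. In this case the $j$-function $j\colon\mathbb{P}^1\to\mathbb{P}^1$ is a non-constant morphism, hence surjective. So some point of the base has $j=\infty$, and the fiber over it is of type $I_n$ or $I_n^*$ with $n\geqslant 1$. There are two subtleties here.
\begin{itemize}
\item Fibers of type $I_0^*$ have finite $j$, but they do not interfere: a point of the base with $j=\infty$ cannot carry a smooth fiber or an $I_0^*$ fiber, since both have finite $j$-invariant.
\item We need that the fiber over a point with $j=\infty$ is singular of multiplicative type. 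This follows from Kodaira's classification, which says that a pole of $j$ occurs exactly at fibers of type $I_n$ or $I_n^*$ with $n\geqslant 1$. This is the content of the last row of Table~1 read in the converse direction.
\end{itemize}
Given such a fiber, the second part of Lemma~\ref{lem-fiberscoreg} yields $\coreg(X)=0$, which contradicts $\coreg(X)>0$.

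\textbf{Main obstacle.} The only delicate point is justifying that the $j$-map of $\phi_Y$ is a morphism on the whole of $\mathbb{P}^1$ whose poles occur only at $I_n$ or $I_n^*$ fibers. We take this as standard from Kodaira's theory (see \cite{BHPVdV04}), since $\phi_Y$ is relatively minimal with a section. With that in hand, both implications reduce directly to Lemma~\ref{lem-fiberscoreg}.
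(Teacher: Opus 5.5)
Your proof is correct and follows essentially the same argument as the paper: non-isotriviality forces a pole of $j$, hence a fiber of type $I_n$ or $I_n^*$ with $n\geqslant 1$, so $\coreg(X)=0$ by Lemma~\ref{lem-fiberscoreg}; isotriviality rules out $I_n$ fibers, which gives $\coreg_1(X)=1$ by the same lemma. The only cosmetic difference is that the paper excludes multiple fibers via Lemma~\ref{lem-red-irred-fibers}, whereas you get this from the existence of a section.
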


\begin{proof}
By Lemma \ref{lem-red-irred-fibers} the fibers of $\phi_Y\colon Y\to \mathbb{P}^1$ are not multiple. 
The $j$-invariant function has a pole on $\mathbb{P}^1$ if and only if the corresponding fiber of $\phi_Y$ has type $I_n$ or $I^*_n$ for $n \geqslant 1$, see Table~1.  
Assume that $\phi_Y\colon Y\to \mathbb{P}^1$ is not isotrivial, then $j$ has a pole, and there exists a fiber $F$ with $j(F)=\infty$. Then $\coreg(X) = 0$ by Lemma~\ref{lem-fiberscoreg}.

Assume that 
$\phi_Y$ is isotrivial, so $j \in \mathbb{C}$. In particular, $\phi_Y$ does not have a fiber of type $I_n$, $n \geqslant 1$, and $\coreg_1 (X) = 1$ by Lemma~\ref{lem-fiberscoreg}.
\end{proof}

\begin{remark}
In view of Table 1, Proposition \ref{prop-smooth-dP} can be stated as follows: a smooth del Pezzo surface $X$ of degree $1$ has coregularity $1$ if and only if the corresponding elliptic fibration $\phi_Y\colon Y\to \mathbb{P}^1$ is isotrivial (and has $j$-invariant equal to $0$). Otherwise, $X$ has coregularity $0$.
\end{remark}

\begin{remark}
\label{rem-possible-singular-fibers}
Note that for the topological Euler characteristic $\chi(Y)$ we have $\chi(Y)=12$, and at the same time $\chi(Y)$ is equal to the sum of $\chi(F)$ for singular fibers of $\phi_Y$. Therefore if $\phi_Y$ is isotrivial with a given \mbox{$j$-invariant} then one can apply Table~$1$ and compute possibilities for configurations of~singular fibers of $\phi_Y$. We list the following possibilities, in which, for example, we write $IV + 4II$ if $\phi_Y$ has one singular fiber of Kodaira type $IV$, $4$ singular fibers of Kodaira type $II$, and no other singular fibers.

\begin{itemize}
\item If $j \neq 0$ and $j \neq 1728$ then the only possible configuration is $2I_0^*$.

\item If $j = 0$ then the possible configurations are $II^* + II$, $IV^* + IV$, $IV^* + 2II$, $2I_0^*$, $I_0^* + IV + II$, $I_0^* + 3II$, $3IV$, $2IV + 2II$, $IV + 4II$, $6II$. 

\item If $j = 1728$ then the possible configurations are $III^* + III$, $2I_0^*$, $I_0^*+2III$, $4III$.
\end{itemize}
Note that the list of these configurations is well known and can be found, for example, in \cite[Section 4]{Mi90}. 
\end{remark}

We will use the following well-known result.

\begin{proposition}[{cf. \cite[Example 1.3.1 and 1.3.2]{Sil94}}]
\label{rem-cm-ell-curves}
An elliptic curve with $j=0$ (resp., $j=1728$) is given by the following Weierstrass equation:
\begin{equation}
w^2 = z^3 + a, \quad a\neq 0 \quad \quad \quad (\text{resp},\ \ w^2 = z^3 + b z, \quad b\neq 0). 
\end{equation}
These elliptic curves have complex multiplication and admit an automorphism of order $6$ (resp.,~$4$). They can be obtained as a quotient $\mathbb{C}/\Lambda$ where $\Lambda$ is the lattice of Eisenstein (resp., Gaussian) numbers in $\mathbb{C}$.  
\end{proposition}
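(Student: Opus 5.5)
The plan is to reduce everything to the short Weierstrass form and the classical formula for the $j$-invariant, and then read off the automorphisms and the lattices directly.

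\textbf{Step 1: the Weierstrass equations.} Since $\mathrm{char}\,\mathbb{K}=0$, every elliptic curve has a short Weierstrass model $w^2=z^3+Az+B$ with $4A^3+27B^2\neq 0$. Its $j$-invariant is
\[
j = 1728\cdot\frac{4A^3}{4A^3+27B^2}.
\]
From this formula:
\begin{itemize}
\item $j=0$ holds exactly when $A=0$. The curve is then $w^2=z^3+a$, and nonsingularity forces $a\neq 0$.
\item $j=1728$ holds exactly when $4A^3 = 4A^3+27B^2$, that is, when $B=0$. The curve is then $w^2=z^3+bz$ with $b\neq 0$.
\end{itemize}
Conversely, both families visibly have the stated $j$-invariants. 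Because $j$ classifies elliptic curves over an algebraically closed field, this gives the first claim.

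\textbf{Step 2: automorphisms.} Let $\zeta$ be a primitive cube root of unity. On $w^2=z^3+a$ the map $(z,w)\mapsto(\zeta z,-w)$ preserves the equation and fixes the point at infinity. Its powers are distinct for exponents $1,\dots,5$, so it has order $6$.

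On $w^2=z^3+bz$ the map $(z,w)\mapsto(-z,\sqrt{-1}\,w)$ preserves the equation, since $-w^2=-z^3-bz$. Its square is $(z,w)\mapsto(z,-w)$, which is $-1$ in the group law, so it has order $4$.

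These are group automorphisms fixing the origin, hence they lie in $\mathrm{End}(E)$. The first is not $\pm1$ and satisfies $\alpha^2-\alpha+1=0$; the second satisfies $\alpha^2+1=0$. So $\mathrm{End}(E)\supseteq\mathbb{Z}[\zeta]$ (resp.\ $\mathbb{Z}[\sqrt{-1}]$) is strictly larger than $\mathbb{Z}$, which is complex multiplication.

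\textbf{Step 3: uniformization.} Over $\mathbb{C}$, take $\Lambda=\mathbb{Z}[\omega]$ with $\omega=e^{2\pi i/3}$. Then $\omega\Lambda=\Lambda$. Using $g_2(c\Lambda)=c^{-4}g_2(\Lambda)$, we get $g_2(\Lambda)=\omega^{-4}g_2(\Lambda)$, and since $\omega^{-4}\neq 1$ this forces $g_2(\Lambda)=0$. Hence $j(\mathbb{C}/\Lambda)=0$.

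Similarly, for $\Lambda=\mathbb{Z}[i]$ we have $i\Lambda=\Lambda$. Using $g_3(c\Lambda)=c^{-6}g_3(\Lambda)$, we get $g_3(\Lambda)=-g_3(\Lambda)$, so $g_3(\Lambda)=0$ and $j=1728$.

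Since $j$ determines the isomorphism class, every curve with $j=0$ (resp.\ $1728$) is isomorphic to $\mathbb{C}/\mathbb{Z}[\omega]$ (resp.\ $\mathbb{C}/\mathbb{Z}[i]$).

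I expect no serious obstacle here. The only point needing care is keeping straight the homogeneity weights of $g_2$ and $g_3$ under scaling of the lattice.
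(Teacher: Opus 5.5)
Your proposal is correct. The paper gives no proof of its own and simply cites Silverman's examples, which argue exactly as you do: the $j$-formula forces $A=0$ (resp.\ $B=0$) in the short Weierstrass form, the automorphisms $(z,w)\mapsto(\zeta z,-w)$ and $(z,w)\mapsto(-z,\sqrt{-1}\,w)$ are written down explicitly, and homogeneity of $g_2,g_3$ under $\omega\Lambda=\Lambda$, $i\Lambda=\Lambda$ gives the lattices. The one step you leave unjustified, that the order-$6$ automorphism $\alpha$ satisfies $\alpha^2-\alpha+1=0$, follows from $\alpha^3=-1$, $\alpha\neq -1$, and the fact that $\mathrm{End}(E)$ has no zero divisors.
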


Now we want to consider different cases of isotrivial elliptic fibrations $\phi_Y$ and describe the corresponding del Pezzo surfaces $X$ with $\coreg(X) = 1$. Note that the configuration $2I_0^*$ corresponding to~two $D_4$-singularities on $X$ may have any value  of $j$. Actually this case was considered in~\cite[Example 3.13]{Mo24}, and one has $\coreg(X) = 0$ in this case. We give a proof of the fact that~${\coreg(X) = 0}$ for convenience of the reader.

\begin{proposition}
\label{prop-2D4}
Let $X$ be a del Pezzo surface of degree $1$ with two $D_4$-singularities. Then $\mathrm{coreg}_1(X)=1$, $\coreg_2(X)=0$, so $\coreg(X)=0$.
\end{proposition}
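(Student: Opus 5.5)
Two $D_4$ points on $X$ give two fibers of type $I_0^*$ of $\phi_Y$. Since $\chi(I_0^*)=6$ and $\chi(Y)=12$, these are the only singular fibers. The $j$-invariant has no poles, so $\phi_Y$ is isotrivial, and Proposition~\ref{prop-isotrivial} (equivalently Lemma~\ref{lem-fiberscoreg}) gives $\coreg_1(X)=1$. Once we show $\coreg_2(X)=0$, we get $\coreg(X)=0$ immediately, because $\coreg$ is the minimum over all $l$.

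To construct a $2$-complement with one-dimensional dual complex, I would work on $Y$ and then push forward via $h$, using Lemma~\ref{lem-coregpush} together with Lemma~\ref{lem-singweak}. Let $F_1,F_2$ be the two $I_0^*$ fibers. Write
\[
F_i=2C_i+E_{i1}+E_{i2}+E_{i3}+E_{i4},
\]
where $C_i$ is the central component and the $E_{ij}$ are the end components. The section $S$ (the $f$-exceptional curve) meets exactly one end component in each fiber, say $E_{11}$ and $E_{21}$. The divisor $\tfrac12 F_1+\tfrac12 F_2$ has $C_1,C_2$ with coefficient $1$, but these are disjoint, so its dual complex is just two points. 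I would try to connect them with the section. The candidate boundary is
\[
D_Y=\tfrac12F_1+\tfrac12F_2+\alpha S+\dots,
\]
modified so that it stays $\sim_{\mathbb{Q}}-K_Y$ and contains a chain of coefficient-one curves $C_1,\dots,C_2$. A cleaner variant: $Y$ is the resolution of a quotient $(E\times\mathbb{P}^1)/\pm1$, and I would use the four $2$-torsion sections. Choose a section $T$ passing through $E_{1j}$ and $E_{2k}$, and consider
\[
\Delta=C_1+C_2+\tfrac12\sum_j E_{1j}+\tfrac12\sum_j E_{2j}
\]
plus suitable multiples of sections, then compute the discrepancies.

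Since we must land on $X$ anyway, a more concrete route is to work directly with $X\subset\mathbb{P}(1,1,2,3)$ with equation
\[
w^2=z^3+a\,x^2y^2z+b\,x^3y^3,
\]
which is the isotrivial family with $D_4$ points at $P_1=\{x=0\}$ and $P_2=\{y=0\}$. I would take $D=\tfrac12(B_1+B_2)$ with $B_1+B_2\in|-2K_X|$, for instance $B=\{z=\lambda xy\}$, or $B$ equal to the union of the two anticanonical curves through $P_1,P_2$ with another curve chosen so that the lc centres are $P_1$ and $P_2$, joined by a curve of coefficient $1$. The natural choice is a reducible member of $|-2K_X|$ given by $z=\lambda xy$, with $\lambda$ chosen so that the cubic $\lambda^3+a\lambda+b$ has a root. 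Then $w^2=(xy)^3(\lambda^3+a\lambda+b)=0$ and the curve $\{z=\lambda xy\}$ is nonreduced, i.e.\ $2\Gamma$ with $\Gamma$ passing through both $P_1$ and $P_2$. So $D=\Gamma$ is a $2$-complement containing both singular points. I would check on the minimal resolution that the crepant pullback of $(X,\Gamma)$ has coefficient $1$ at the central curves $C_i$, and that the strict transform of $\Gamma$ is a coefficient-one curve joining them. The dual complex then contains an edge, so $\coreg_2(X)=0$.

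The main obstacle is this local discrepancy computation at the $D_4$ points. One must verify that $\Gamma$ passes through each $D_4$ point so that $(X,\Gamma)$ is lc but not klt there, with the log canonical place on the central curve, and that the strict transform of $\Gamma$ meets the relevant coefficient-one curves transversally rather than creating worse singularities. I would do this via the local equation of $D_4$ as $\mathbb{C}^2/Q_8$, or by an explicit blow-up computation.
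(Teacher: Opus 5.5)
Your proposal is correct. Once you settle on the member $\{z=\lambda xy\}=2\Gamma$ of $|-2K_X|$, with $\lambda$ a root of $\lambda^3+a\lambda+b$, it is exactly the paper's argument. Here $\Gamma$ is the preimage of a component of the branch curve through both $D_4$ points. The paper writes $X$ as $w^2=z(z+xy)(z+axy)$ and uses $D'=\{z=w=0\}$, which is your $\Gamma$ after the shift $z\mapsto z+\lambda xy$. Your treatment of $\mathrm{coreg}_1$ via the two $I_0^*$ fibres is also the same as the paper's.

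The local check you defer is done in the paper by a single blow-up of each $D_4$ point. This produces an $A_1$ point at which the pullback of $D'$ is the pair of rulings through the vertex. The outcome is as you predict, with one refinement: on the minimal resolution the crepant pullback is $\tilde\Gamma+C+E_1+\tfrac12(E_2+E_3)$. So $\tilde\Gamma$ reaches the central curve $C$ through the end curve $E_1$, which also has coefficient one, rather than meeting $C$ directly.
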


\begin{proof}
Two singular fibers of $\phi_Y$ containing $D_4$-singularities have type $I_0^*$, and there are no other singular fibers, since $\chi(I^*_0)=6$, and $2\chi(I^*_0) = \chi(Y)=12$. Therefore $\coreg_1(X) = 1$ by Lemma~\ref{lem-fiberscoreg}. 

To compute $\coreg_2(X)$, consider the equation (cf. Big Table in \cite[Section 8]{ChP08}) of $X$ in~$\mathbb{P}(1,1,2,3)$:
\[
w^2=z(z+xy)(z+axy),\quad \quad a\in \mathbb{C}\setminus \{0,1\}.
\]
We see that the Cartier divisor $D=\{z=0\}$ belongs to the linear system $|-2K_X|$, and passes through both singular points of $X$ which have coordinates $P_1=(1:0:0:0)$ and $P_2=(0:1:0:0)$. 
One has $D=2D'$ for a reduced irreducible curve $D'$ given by $z=w=0$. 
Let us check that the pair~${(X,D')}$ is lc, and there exists a divisor $E$ with log discrepancy $0$ over $P_i$. We can work in the affine chart $\mathbb{C}^3$ of $\mathbb{P}(1,1,2,3)$ near $P_i$ where, up to a coordinate change, we have the following equations of $X$ and $D$:
\[
w^2 = xz(z+x), \quad \quad \quad D = \{z=0\},
\]
and $P_i$ is the origin. 
Consider the blow-up of the origin $\psi\colon \widetilde{\mathbb{C}^3}\to \mathbb{C}^3$. Working in the chart $z\mapsto xz$, $w\mapsto xw$, $x\mapsto x$ on the blow-up, we obtain the following equation of the strict transform~$\widetilde{X}$ of $X$,
\[
w^2 = xz(z+1),
\]
which is the equation of an ordinary double point. The preimage $\widetilde{D}'=\psi|_{\widetilde{X}} ^*D'$ of $D'$ is given by the equation $xz=0$, so $(\widetilde{X}, \widetilde{D}')$ is strictly lc. Hence, $(X,D')$ is lc, and there exists a divisor $E$ with log discrepancy $0$ over $P_i$. Thus, $\mathrm{coreg}(X)=\mathrm{coreg}_2(X)=0$ as claimed. 
\end{proof}

Hence we see that if $\coreg(X) = 1$ then $\phi_Y$ is isotrivial with $j$-invariant $0$ or $1728$. For these cases we need the following proposition.

\begin{proposition}
\label{prop-irreducible-coreg}
Let $X$ be a del Pezzo surface of degree $1$ with du Val singularities. Consider a~double cover~${g\colon X\to \mathbb{P}(1,1,2)}$ with the ramification curve $R\sim 6L$ where $L$ is the positive generator of $\mathrm{Cl}(\mathbb{P}(1,1,2))=\mathbb{Z}$. Assume that $R$ is irreducible. Let $D$ be an element of $|-2K_X|$ which is not of the form $D = D_1 + D_2$ with $D_i\in |-K_X|$. 
Then $(X, \frac{1}{2}D)$ is~klt. 

As a consequence, either $X$ has a singular point of type $D_n$ for $n\geqslant 5$ and $\mathrm{coreg}(X)=0$, or~${\mathrm{coreg}(X)=\mathrm{coreg}_1(X)}$.
\end{proposition}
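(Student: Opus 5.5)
The plan has three steps: describe the elements of $|-2K_X|$ via the double cover $g$, prove the klt statement by checking smooth points and du Val points separately, and then derive the consequence from Theorem \ref{thm-1-2-complement}.

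\textbf{Step 1: the elements of $|-2K_X|$.} I will use that $|-2K_X|$ is the pullback under $g$ of $|\mathcal{O}_{\mathbb{P}(1,1,2)}(2)|$. In weighted coordinates $X=\{w^2=f_6(x,y,z)\}\subset\mathbb{P}(1,1,2,3)$, an element of $|-2K_X|$ is cut out by $q=az+b(x,y)$ with $b$ a binary quadric. If $a=0$, then $q=\ell_1\ell_2$ and $D=D_1+D_2$ with $D_i\in|-K_X|$; this is the excluded case. So we may assume $a\neq 0$. Then $D=g^*H$, where $H=\{q=0\}$ is a hyperplane section of the cone not through the vertex. Such an $H$ is a smooth conic, isomorphic to $\mathbb{P}^1$, with $H\cdot R=6$.

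\textbf{Step 2: klt away from the singular points of $X$.} Locally $X$ is $w^2=r$ with $R=\{r=0\}$, and $D$ is $g^{-1}(H)$.
\begin{itemize}
\item If $D$ is reduced, then at a smooth point of $X$ the curve $D$ is locally $w^2=r|_H$, which is a plane curve singularity $w^2=t^k$. For $\frac12 D$ to be klt we need $\mathrm{lct}>\frac12$. We have $\mathrm{lct}(w^2=t^k)=\frac12+\frac1k$, which is always greater than $\frac12$. So $\frac12 D$ is klt at such points.
\item If $D$ is non-reduced, then $D=2D'$, which happens exactly when $H$ is contained in $R$ or when $r|_H$ is a square. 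Containment is impossible since $R$ is irreducible and $R\neq H$ (their degrees differ, $6$ versus $2$). So $r|_H$ is a square, meaning $H$ is tangent to $R$ at every intersection point, e.g.\ $3$ tangency points. Then $D'$ is a reduced curve passing through points of $X$. At smooth points of $X$, the pair $(X,D')$ is lc, and I expect $D'$ to be smooth there. Hence $\frac12D=D'$ is lc but not klt, which would contradict the claim.
\end{itemize}
So I must examine this case more closely. The claim is that $(X,\frac12 D)$ is klt, and $\frac12\cdot 2D'=D'$ has coefficient $1$, so this pair is never klt. The hypothesis must therefore exclude it. The natural reading is that $D=2D'$ with $D'\in|-K_X|$ is a special case of $D_1+D_2$. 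Indeed, $2D'\sim-2K_X$ forces $D'\sim -K_X$ when $\mathrm{Cl}$ is torsion-free on the resolution, which holds because $Z$ is rational. Hence $D'\in|-K_X|$ and this case is excluded. So $D$ is reduced, and Step 2 settles all smooth points of $X$.

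\textbf{Step 3: the singular points and the consequence.} At a singular point $P$ of $X$, $g(P)$ is a singular point of $R$ lying on $H$. Here I use that $(X,\frac12 D)$ is the double cover of $(\mathbb{P}(1,1,2),\frac12R+\frac12H)$ near $g(P)$, with the vertex avoided. By the Hurwitz-type formula $K_X+\frac12D=g^*(K+\frac12R+\frac12H)$, klt descends and ascends. So it suffices to show that $(\mathbb{A}^2,\frac12R+\frac12H)$ is klt at $g(P)$.

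This is the main obstacle. $R$ has an ADE singularity there, so $(\mathbb{A}^2,\frac12R)$ is canonical-type: for ADE curve singularities, the pair with $\frac12R$ has $\mathrm{lct}(R)>\frac12$. Adding the smooth curve $\frac12H$, I would check the finitely many ADE local forms with $H$ a smooth curve through the point. The only failures should come from high tangency: for instance a $D_n$ singularity ($x^2y+y^{n-1}$) with $H$ equal to the line component $y=0$, which forces $H\subset R$ and is excluded by irreducibility. I would verify this via log discrepancies on the minimal resolution of $X$ (equivalently, weighted blow-ups), and use irreducibility of $R$ to rule out $H$ being a branch of $R$.

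For the consequence, apply Theorem \ref{thm-1-2-complement}. If $\mathrm{coreg}(X)=0$ and $\mathrm{coreg}_1(X)=1$, then some $D\in|-2K_X|$ gives a non-klt $(X,\frac12D)$. By the above, $D=D_1+D_2$ with $D_i\in|-K_X|$. Then by Lemma \ref{lem-fiberscoreg}, via fibers of $\phi_Y$, the non-klt locus requires $\frac12F_1+\frac12F_2$ to be non-klt. This means a fiber with $1-\mathrm{lct}\geq\frac12$ and a dual complex of dimension $\geq0$ arising nontrivially, i.e.\ a fiber of type $I_n^*$, $n\geq1$, hence a $D_{n+4}$ singularity. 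The types $I_0^*, IV^*,\dots$ give points only with coefficient $\frac12$, and checking with Table 1 shows no divisor of log discrepancy $0$ except in $I_n^*$, $n\geq 1$. Hence $X$ has a $D_n$ point with $n\geq5$, and otherwise $\mathrm{coreg}(X)=\mathrm{coreg}_1(X)$.
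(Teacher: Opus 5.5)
Your Steps 1--2 are essentially the paper's entire argument for the klt claim. The paper substitutes $z=-g_2(x,y)$ and views $D$ as the curve $w^2=g_6(x,y)$ in $\mathbb{P}(1,1,3)$, with $g_6\neq0$ because $R$ is irreducible. It then reads off $\mathrm{lct}>\frac12$ from the local forms $w^2=x^a$. This controls $(X,\frac12D)$ only at smooth points of $X$, so you are right that the singular points need a separate argument. A side correction on Step 2: a nonzero square $r|_H$ does not give $D=2D'$. In that case $g^{-1}(H)$ splits into two distinct curves exchanged by the covering involution, neither of which is in $|-K_X|$. $D$ is reduced simply because $H\not\subset R$.

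\textbf{The gap is Step 3, and it cannot be closed, because the klt claim itself is false.} The failures are not confined to the case where $H$ is a branch of $R$. Let $P\in X$ have type $D_n$ or $E_n$. Then $R$ has multiplicity $3$ at $g(P)$. So for every smooth $H$ through $g(P)$, the boundary $\frac12R+\frac12H$ has multiplicity $\geqslant2$ there, and the blow-up of $g(P)$ has log discrepancy $\leqslant 0$. Equivalently, since $D$ is Cartier, $\pi^*D$ contains the fundamental cycle of $P$. Its largest coefficient is $2$ for $D_n$, and $3,4,6$ for $E_6,E_7,E_8$. Hence $(X,\frac12D)$ is never klt at $P$, and at an $E_n$ point it is not even lc. Every non-vertex point of the cone lies on a hyperplane section missing the vertex. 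So such $D=\{z+g_2=0\}$ exist as soon as $X$ has a point of type $D$ or $E$.

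For the consequence, Theorem~\ref{thm-1-2-complement} supplies an lc $2$-complement with one-dimensional dual complex, not merely a non-klt one. So you only need that no such $D$ makes $(X,\frac12D)$ lc with $\dim\mathcal{D}=1$. At $E_n$ points this holds, since the pair is not lc. \textbf{At a $D_4$ point it fails.} Let $H$ pass through the $D_4$ point of $R$ with contact order $2$ to one of its three branches.
\begin{enumerate}
\item The first blow-up has log discrepancy $0$.
\item That branch and $\tilde H$ still meet on the exceptional curve. There the boundary is three pairwise transversal curves with coefficients $1,\frac12,\frac12$.
\item So the next blow-up again has log discrepancy $0$ and meets the first one. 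After that everything is snc with coefficients $\leqslant1$.
\end{enumerate}
On the minimal resolution this reads $\pi^*D=\tilde D+2E_0+2E_1+E_2+E_3$, where $E_0$ is the central curve, $\tilde D\cdot E_0=0$ and $\tilde D\cdot E_1=2$. Thus $(X,\frac12D)$ is lc with $\dim\mathcal{D}(X,\frac12D)=1$.

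Concretely, take $X=\{w^2+z^3+x^3y^2(x-y)=0\}$. It has irreducible $R$, isotrivial $\phi_Y$ with fibres $I_0^*+IV+II$, $\mathrm{coreg}_1(X)=1$, and no $D_n$ point with $n\geqslant5$. Yet $D=\{z=xy-ax^2\}$ with general $a$ gives $\mathrm{coreg}_2(X)=0$. So the step ``by the above, $D=D_1+D_2$'' in your last paragraph is unjustified; the paper makes the same step. The stated dichotomy itself fails for this $X$. No completion of Step 3 can avoid treating $D_4$ points like $D_n$ with $n\geqslant5$.
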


\begin{proof}
By Theorem \ref{thm-del_pezzo_properties}, the surface $X$ can be given in $\mathbb{P}(1,1,2,3)$ by the equation:
\begin{equation}
\label{eq-general-delpezzo-degree1}
w^2 + z^3 + z f_4(x,y) + f_6(x,y) = 0,
\end{equation}
where $f_i(x,y)$ is a homogeneous polynomial of degree $i$.

The equation of $D$ 
is $z + g_2(x,y)=0$, since $D$ is not of the form $D_1+D_2$ with $D_i\in |-K_X|$. Substituting this equation to \eqref{eq-general-delpezzo-degree1}, we get an equation 
\[
w^2 = g_6(x,y)
\]
in $\mathbb{P}(1,1,3)$. 
Note that $g_6$ is a non-zero polynomial, which follows from the assumption that $R$ is~irreducible, so $g(D)$ is not a component of $R$. 
Note that the singular point of $\mathbb{P}(1,1,3)$ does not belong to $D$. In the charts isomorphic to $\mathbb{C}^2$ which cover $D$ the equation of $D$ becomes (after an~analytic change of coordinates) 
\[
w^2 = x^a, \quad \quad \quad w^2 = y^b,
\]
where $a \geqslant 1$, $b \geqslant 1$. We conclude that the log-canonical threshold of the pair $(X, \frac{1}{2}D)$ is greater than $\frac{1}{2}$. In other words, $(X, \frac{1}{2}D)$ is klt.

If $X$ has a singular point of type $D_n$ for $n\geqslant 5$ then $\mathrm{coreg}(X)=0$ by Corollary \ref{cor-a-d-0}. Otherwise, for the pair $(X, \frac{1}{2}D)$ with $D=D_1+D_2$ and $D_i\in |-K_X|$ one has $\dim \mathcal{D}(X,\frac{1}{2}D)\leqslant 0$, cf. Table 1. Using Theorem \ref{thm-1-2-complement}, this implies that $\mathrm{coreg}(X)=\mathrm{coreg}_1(X)$, and the claim follows. 
\end{proof}

\subsection{Isotrivial fibrations with $j=0$}
\label{sec-j-0}
Now we show that for isotrivial $\phi_Y$ with $j = 0$ the assumptions of Proposition \ref{prop-irreducible-coreg} are satisfied almost always.

\begin{lemma}
\label{lem-irreducible-j0}
Let $X$ be a del Pezzo surface of degree $1$ with du Val singularities such that the~corresponding elliptic fibration $\phi_Y$ is isotrivial with $j$-invariant $0$. Then either the ramification curve~$R$ of the double cover $g\colon X\to \mathbb{P}(1,1,2)$ is irreducible, or $X$ has two $D_4$-singularities.
\end{lemma}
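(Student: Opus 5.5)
We work with the Weierstrass model \eqref{eq-general-delpezzo-degree1}, namely $w^2+z^3+zf_4(x,y)+f_6(x,y)=0$ in $\mathbb{P}(1,1,2,3)$. The ramification curve $R\subset\mathbb{P}(1,1,2)$ is then given by $z^3+zf_4+f_6=0$. The fibers of $\phi_Y$ over $(x:y)\in\mathbb{P}^1$ are the cubics $w^2=-(z^3+f_4 z+f_6)$. Their $j$-invariant is a constant multiple of $f_4^3/(4f_4^3+27f_6^2)$. Isotriviality with $j=0$ therefore forces $f_4\equiv 0$, so the equation becomes
\[
w^2+z^3+f_6(x,y)=0,
\]
and $R=\{z^3+f_6(x,y)=0\}$.

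The plan is to analyse when $z^3+f_6$ factors in $\mathbb{C}[x,y,z]$, viewed as a polynomial in $z$ over $\mathbb{C}[x,y]$.
\begin{itemize}
\item Since $R$ avoids the vertex of the cone, $R$ is reduced, so $f_6\not\equiv 0$ (otherwise $R=3\{z=0\}$).
\item A cubic in $z$ over the UFD $\mathbb{C}[x,y]$ is reducible if and only if it has a root $z=-g_2(x,y)$ in $\mathbb{C}[x,y]$, by Gauss's lemma. Such a root exists exactly when $f_6=g_2^3$ for some quadratic form $g_2$.
\item Hence, if $R$ is reducible, then $f_6=g_2^3$ and
\[
z^3+g_2^3=(z+g_2)(z+\omega g_2)(z+\omega^2 g_2),
\]
where $\omega$ is a primitive cube root of unity. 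In this case $R$ splits into three components $z=-\omega^k g_2$.
\end{itemize}

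Next we identify the singularities in the reducible case.
\begin{itemize}
\item Because $f_6\neq 0$, we have $g_2\neq 0$.
\item If $g_2$ had a double root, say $g_2=x^2$, then $f_6=x^6$. The fiber over $x=0$ is given by $w^2+z^3+x^6$, which is not du Val at the origin: it is not an $ADE$ singularity, since it is the simple elliptic $\tilde E_8$-type $(2,3,6)$. This is excluded.
\item So $g_2$ has two distinct roots, which after a coordinate change we take to be $g_2=xy$. Then $f_6=x^3y^3$.
\item Near $x=0$ the local equation is $w^2+z^3+x^3y^3$ with $y$ a unit. This is $w^2+z^3+x^3$, a $D_4$ singularity. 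The same holds at $y=0$.
\end{itemize}
Thus $X$ has two $D_4$ points. Alternatively, one can observe that the three components of $R$ pairwise meet exactly over the two roots of $g_2$, forming triple points of $R$, which give $D_4$.

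The main subtle step is the first one: justifying that isotriviality with $j=0$ forces $f_4\equiv 0$ identically, and not just $j\equiv 0$ generically. This follows because $j$ is a rational function on $\mathbb{P}^1$ that is constant and equal to $0$. Since $f_6\not\equiv 0$, the expression $f_4^3/(4f_4^3+27f_6^2)$ can vanish identically only if $f_4\equiv 0$. Everything after that is elementary factorization.
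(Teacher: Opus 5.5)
Your argument is correct and follows the paper's proof essentially step by step. You reduce to $w^2+z^3+f_6=0$, note that a factorization of $z^3+f_6$ forces a factor $z+g_2$ and hence $f_6=g_2^3$, rule out $g_2=x^2$ because $w^2+z^3+x^6=0$ is not du Val, and read off two $D_4$ points from $g_2=xy$; your extra justifications (the $j$-invariant formula forcing $f_4\equiv 0$, and Gauss's lemma) only fill in details the paper leaves implicit.

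One small slip: $f_6\not\equiv 0$ does not follow from $R$ avoiding the vertex, since the non-reduced curve $3\{z=0\}$ also avoids it. It follows from normality of $X$: if $f_6\equiv 0$, then $w^2+z^3=0$ is singular along the curve $z=w=0$.
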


\begin{proof}
If $\phi_Y$ is isotrivial and $j = 0$ then equation~\eqref{eq-general-delpezzo-degree1} in $\mathbb{P}(1,1,2,3)$ has the form (cf. Proposition~\ref{rem-cm-ell-curves})
$$
w^2 + z^3 + f_6(x,y) = 0.
$$
The ramification divisor $R$ in $\mathbb{P}(1,1,2)$ is given by the equation $z^3 + f_6(x,y) = 0$. Note that the~curve~$R$ is irreducible if and only if the polynomial $z^3 + f_6(x,y)$ is irreducible.

Assume that $z^3 + f_6(x,y)$ is reducible. Then it has an irreducible factor $z + p(x,y)$, and~therefore $(-p(x,y))^3 + f_6(x,y) = 0$. Thus $f_6(x,y)$ is a cube of $p(x,y)$, and $p(x,y)$ is a homogeneous polynomial of degree $2$. After a coordinate change, there are two possibilities: either~${p(x,y) = x^2}$, or~${p(x,y) = xy}$. The former case is impossible, since the surface $X$ given by the equation
$$
w^2 + z^3 + x^6 = 0
$$
has a singular point $w=z=x=0$ which is not a du Val singularity. In the latter case $X$ is given by~the~equation~$w^2 + z^3 + x^3y^3 = 0$, and $X$ is a del Pezzo surface of degree $1$ with two $D_4$-singularities, which can be seen by passing to affine charts, see the proof of Proposition \ref{prop-2D4}.
\end{proof}

Now we are able to obtain the following proposition in which we compute $\coreg(X)$ for the case when $\phi_Y$ is isotrivial with $j = 0$.

\begin{proposition}
\label{prop-j0-coreg-1-except-for-d4}
Let $X$ be a del Pezzo surface of degree $1$ with du Val singularities such that the corresponding elliptic fibration $\phi_Y$ is isotrivial with $j$-invariant $0$. If $X$ has two $D_4$-singulartities then $\coreg(X) = 0$. Otherwise $\coreg(X) = 1$.
\end{proposition}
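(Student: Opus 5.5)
The plan is to combine Proposition \ref{prop-2D4}, Lemma \ref{lem-irreducible-j0} and Proposition \ref{prop-irreducible-coreg}. If $X$ has two $D_4$-singularities, then $\coreg(X)=0$ directly by Proposition \ref{prop-2D4}, so nothing more is needed in that case.

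Assume from now on that $X$ does not have two $D_4$-singularities.

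\textbf{Step 1: irreducibility of $R$.} By Lemma \ref{lem-irreducible-j0}, the ramification curve $R$ of the double cover $g\colon X\to\mathbb{P}(1,1,2)$ is irreducible. This puts us in the setting of Proposition \ref{prop-irreducible-coreg}.

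\textbf{Step 2: ruling out $D_n$, $n\geqslant 5$.} Proposition \ref{prop-irreducible-coreg} gives a dichotomy: either $X$ has a $D_n$-singularity with $n\geqslant 5$, or $\coreg(X)=\coreg_1(X)$. A $D_n$-point with $n\geqslant 5$ corresponds to a fiber of type $I^*_{n-4}$ by Table~1. Such a fiber has $j=\infty$, which is impossible since $\phi_Y$ is isotrivial with $j=0$. (Equivalently, the configurations for $j=0$ listed in Remark \ref{rem-possible-singular-fibers} contain no $I_n^*$ fibers with $n\geqslant1$.) Hence $\coreg(X)=\coreg_1(X)$.

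\textbf{Step 3: computing $\coreg_1$.} Since $\phi_Y$ is isotrivial, Proposition \ref{prop-isotrivial} gives $\coreg_1(X)=1$. Therefore $\coreg(X)=1$.

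\textbf{Main obstacle.} The substantive input is the klt statement in Proposition \ref{prop-irreducible-coreg}, namely that $2$-complements not splitting as $D_1+D_2$ are klt. Combined with Theorem \ref{thm-1-2-complement}, this reduces the question to $1$- and $2$-complements. Since those results are already available, the only thing left to check here is that the hypothesis "$R$ irreducible" holds. This is exactly what Lemma \ref{lem-irreducible-j0} provides, with the $2D_4$ case as the sole exception, and that case is handled by Proposition \ref{prop-2D4}.

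One more point to verify: the split complements $\tfrac12(D_1+D_2)$ have dual complex of dimension $\leqslant 0$. This holds because fibers of types $II$, $IV$, $I_0^*$, $IV^*$, $II^*$ have $\mathrm{lct}$ at most $1$ with no log canonical place for coefficient $\tfrac12$ beyond a single point. This is already used inside the proof of Proposition \ref{prop-irreducible-coreg}.
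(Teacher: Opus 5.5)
Your proof is correct and follows the paper's argument: Proposition~\ref{prop-2D4} handles the $2D_4$ case; otherwise Lemma~\ref{lem-irreducible-j0} makes $R$ irreducible, and Propositions~\ref{prop-irreducible-coreg} and~\ref{prop-isotrivial} give $\mathrm{coreg}(X)=\mathrm{coreg}_1(X)=1$. You also rule out $D_n$-points with $n\geqslant 5$ explicitly (they would give $I^*_{n-4}$ fibers with $j=\infty$), a step the paper leaves implicit and which you justify correctly.
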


\begin{proof}
If $X$ has two $D_4$-singularities then $\coreg(X) = 0$ by Proposition~\ref{prop-2D4}. Otherwise the ramification curve $R$ of the double cover $g\colon X\to \mathbb{P}(1,1,2)$ is irreducible by Lemma~\ref{lem-irreducible-j0}, and therefore~${\coreg(X) = \coreg_1(X) = 1}$ by Propositions~\ref{prop-irreducible-coreg} and \ref{prop-isotrivial}. 
\end{proof}

\begin{remark}
\label{rem-equations-j-0}
One can easily find $f_6(x,y)$ in the equation
$$
w^2 + z^3 + f_6(x,y) =0
$$
giving a del Pezzo surface $X$ of degree $1$ with du Val singularities such that the corresponding elliptic fibration $\phi_Y$ is isotrivial with $j$-invariant $0$ for different configurations of singular fibers of~$\phi_Y$. This is because the types of possible singular fibers with \mbox{$j$-invariant} $0$ are determined by the multiplicity of roots of $f_6(x,y)$. We collect this data in the following table, in which $\rho(X)$ denotes the Picard number of $X$ and $\dim M$ denotes the dimension of the moduli space of such surfaces.

\medskip

\begin{center}
\begin{tabular}{|c|c|c|c|c|c|} 
  \hline
  Fibers of $\phi_Y$ & $\mathrm{Sing}(X)$ & $f_6(x,y)$ & $\coreg(X)$ & $\rho(X)$ & $\dim M$ \\
  \hline
  $II^* + II$ & $E_8$ & $x^5y$ & $1$ & $1$ & $0$ \\ 
  \hline
  $IV^* + IV$ & $E_6 + A_2$ & $x^4y^2$ & $1$ & $1$ & $0$ \\ 
  \hline
  $IV^* + 2II$ & $E_6$ & $x^4y(x-y)$ & $1$ & $3$ & $0$ \\ 
  \hline
  $2I_0^*$ & $2D_4$ & $x^3y^3$ & $0$ & $1$ & $0$ \\ 
  \hline
  $I_0^* + IV + II$ & $D_4 + A_2$ & $x^3y^2(x-y)$ & $1$ & $3$ & $0$  \\ 
  \hline
  $I_0^* + 3II$ & $D_4$ & $x^3y(x-y)(x-ly)$ & $1$ & $5$ & $1$ \\ 
  \hline
  $3IV$ & $3A_2$ & $x^2y^2(x-y)^2$ & $1$ & $3$ & $0$ \\ 
  \hline
  $2IV + 2II$ & $2A_2$ & $x^2y^2(x-y)(x-ly)$ & $1$ & $5$ & $1$\\ 
  \hline
  $IV + 4II$ & $A_2$ & $x^2y(x-y)(x-ly)(x-my)$ & $1$ & $7$ & $2$ \\ 
  \hline
  $6II$ & $\varnothing$ & $xy(x-y)(x-ly)(x-my)(x-ny)$ & $1$ & $9$ & $3$ \\ 
  \hline
\end{tabular}
\end{center}
\begin{center}
Table 2. Singular fibers of isotrivial elliptic fibrations with $j=0$
\end{center}

\medskip

Note that a surface with a given configuration of singular fibers is unique if~${\rho(X) \leqslant 3}$, and for~${\rho(X) \geqslant 5}$ one has 
$\dim M = \frac{1}{2}(\rho(X) - 3)$.
\end{remark}

\begin{remark}
\label{rem-equations-j-0-bad}
The configuration of singularities does not define the surface $X$ (and the elliptic fibration $\phi_Y$) uniquely since in general case $\phi_Y$ may not be isotrivial, or can be isotrivial with $j \neq 0$. For example, a one-dimensional family of surfaces with configuration $2D_4$ and different $j$-invariants is constructed in Proposition~\ref{prop-2D4}.

A surface with a unique $E_8$-singularity can also give collection of singular fibers $II^* + 2I_1$ for $\phi_Y$. In this case $\phi_Y$ is not isotrivial and has two nodal fibers, therefore~${\coreg(X) = 0}$. One can check that such $X$ is unique, and can be given by the equation in $\mathbb{P}(1,1,2,3)$ (cf. \cite[Remark B.5]{ChP08})
\[
w^2 + z^3 + x^4z + x^5y = 0.
\]

A surface with $E_6$ and $A_2$ singularities can also give collection of singular fibers ${IV^* + I_3 + I_1}$ for~$\phi_Y$. In this case $\phi_Y$ is not isotrivial and has a nodal fiber, therefore $\coreg(X) = 0$. 
The existence of such a surface $X$ is proven in \cite{Ye02}, see Table 4.1.
In fact, one can write down an~explicit equation of $X$ such that a surface with singular fibers ${IV^* + IV}$ for~$\phi_Y$ is its specialization:
$$
w^2+z^3-3x^3(x+4y)z+2x^4(x^2+6xy+6y^2)=0.
$$

\end{remark}

\subsection{Isotrivial fibrations with $j=1728$}
\label{sec-j-1728}
We consider the remaining case $j = 1728$. If $X$ is a del Pezzo surface of degree $1$ such that $\phi_Y$ is isotrivial with $j$-invariant $1728$ then the equation~\eqref{eq-general-delpezzo-degree1} in~$\mathbb{P}(1,1,2,3)$ has the form (cf. Proposition \ref{rem-cm-ell-curves})
$$
w^2 + z^3 + f_4(x,y)z = 0,
$$
where $f_4(x,y)$ is a homogeneous polynomial of degree $4$.

The ramification curve $R$ of the double cover $g\colon X\to \mathbb{P}(1,1,2)$ is reducible in this case, therefore we cannot apply Proposition~\ref{prop-irreducible-coreg}. So according to Remark \ref{rem-possible-singular-fibers} we have to consider four possible configurations of singular fibers of $\phi_Y$. For these configuration we need to find explicit equations of~$X$. We start with the following remark.

\begin{remark}
\label{rem-equations-j-1728}
We find $f_4(x,y)$ in the equation
$$
w^2 + z^3 + f_4(x,y)z =0
$$
giving a del Pezzo surface $X$ of degree $1$ with du Val singularities such that the corresponding elliptic fibration $\phi_Y$ is isotrivial with $j$-invariant $1728$ for different configurations of singular fibers of $\phi_Y$. To do this, note that the types of possible singular fibers with \mbox{$j$-invariant} $1728$ bijectively correspond to multiplicity of roots of $f_4(x,y)$. We collect this data in the following table.

\medskip

\begin{center}
\begin{tabular}{|c|c|c|c|c|c|} 
  \hline
  Fibers of $\phi_Y$ & $\mathrm{Sing}(X)$ & $f_4(x,y)$ & $\coreg(X)$ & $\rho(X)$ & $\dim M$\\
  \hline
  $III^* + III$ & $E_7 + A_1$ & $x^3y$ & $1$ & $1$ & $0$ \\ 
  \hline
  $2I_0^*$ & $2D_4$ & $x^2y^2$ & $0$ & $1$ & $0$ \\ 
  \hline
  $I_0^* + 2III$ & $D_4 + 2A_1$ & $x^2y(x-y)$ & $0$ & $3$ & $0 $\\ 
  \hline
  $4III$ & $4A_1$ & $xy(x-y)(x-ly)$ & $1$ & $5$ & $1$ \\ 
  \hline
\end{tabular}
\end{center}
\begin{center}
Table 3. Singular fibers of isotrivial elliptic fibrations with $j=1728$
\end{center}

\medskip

Note also that a surface with a given configuration of singular fibers is unique if~${\rho(X) \leqslant 3}$, and for $\rho(X) \geqslant 5$ one has $\dim M = \frac{1}{2}(\rho(X) - 3)$.
\end{remark}

\begin{remark}
\label{rem-equations-j-1728-bad}
The configuration of singularities does not uniquely define the surface $X$ (and the elliptic fibration $\phi_Y$) since in general case $\phi_Y$ can be not isotrivial, or can be isotrivial with $j \neq 1728$. For example, one-dimensional family of surfaces with configuration $2D_4$ and different $j$-invariants is constructed in Proposition~\ref{prop-2D4}.

A surface with $E_7$ and $A_1$ singularities can also give collection of singular fibers ${III^* + I_2 + I_1}$ for~$\phi_Y$. In this case $\phi_Y$ is not isotrivial and has a nodal fiber, therefore $\coreg(X) = 0$. The~existence of such a surface $X$ is proven in \cite{Ye02}, see Table 4.1.
In fact, one can write down an~explicit equation of $X$ such that a surface with singular fibers ${III^* + III}$ for~$\phi_Y$ is its specialization:
$$
w^2+z^3-3x^3(x+2y)z+2x^5(x+3y)=0.
$$

\end{remark}

For the case $2I_0^*$ the coregularity is computed in Proposition~\ref{prop-2D4}, and the other cases are considered in Lemmas~\ref{lem-x1e7'}, \ref{lem-x1d42a1} and \ref{lem-x14a1}. To compute $\coreg(X)$ for these cases we need the following lemma.

\begin{lemma}
\label{lem-local-coreg}
Let $x\in X$ be a germ of du Val singularity. If $x\in X$ has type $D_n$, then $x\in X$ does not admit non-trivial $1$-complement (here non-trivial means that $D\not\sim 0$). If $x\in X$ has type $E_n$, then $x\in X$ does not admit a complement of coregularity~$0$.   
\end{lemma}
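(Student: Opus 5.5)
The plan is to work with the minimal resolution $\pi\colon \tilde X\to X$. Since du Val singularities are canonical, for any $\mathbb{Q}$-complement $D$ near $x$ we have $K_{\tilde X}+\pi^{-1}_*D+\sum a_iE_i=\pi^*(K_X+D)$ with $a_i\geqslant 0$, where $a_i=1-a(E_i,X,D)$. A complement is lc, so $a_i\leqslant 1$. Intersecting with each $E_j$ and using $K_{\tilde X}\cdot E_j=0$ gives the linear system
\[
\Big(\sum_i a_iE_i\Big)\cdot E_j=-\pi^{-1}_*D\cdot E_j\leqslant 0 .
\]
Thus $\sum a_iE_i$ is $\pi$-nef and anti-effective data on the dual graph. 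The dual complex of $(x\in X,D)$ is determined by the components with $a_i=1$ together with the branches of $\pi^{-1}_*D$ (after further blow-ups).

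\emph{Type $D_n$, $1$-complements.} Suppose $D\in|-K_X|$ locally, i.e.\ $D$ is a reduced Cartier divisor through $x$, since $K_X$ is Cartier. Then all $a_i$ are integers in $\{0,1\}$, and $a_i=1$ exactly for the exceptional curves in the support of $\pi^*D-\pi^{-1}_*D$. Because $D$ is Cartier and passes through $x$, $\pi^*D$ contains the fundamental cycle $Z_{\mathrm{fund}}$, which for $D_n$ has coefficient $2$ on the interior chain. Hence $\sum a_iE_i=\pi^*D-\pi^{-1}_*D\geqslant Z_{\mathrm{fund}}$ has some coefficient $\geqslant 2$, contradicting lc. So no reduced Cartier $D$ through $x$ gives an lc pair, and a $1$-complement must have $D\sim 0$ locally, i.e.\ trivial. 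The same argument also shows that $1$-complements through $E_n$ points fail, but for $E_n$ we need more.

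\emph{Type $E_n$, coregularity $0$.} Coregularity $0$ means that some divisor with log discrepancy $0$ exists and that the dual complex has dimension $1$. Locally this requires two lc places meeting, i.e.\ the local pair is log-toroidal at some point of a suitable resolution. The approach is first to reduce to the known classification of lc surface germs $(x\in X, D)$ with $D$ having a reduced part. An lc surface pair with nonempty $\lfloor D\rfloor$ and a $1$-dimensional dual complex at $x$ must be, up to a cyclic quotient, the toric pair (plt/lc classification, Kawamata/Kollár): $X$ is a cyclic quotient singularity, i.e.\ of type $A_n$, and $D$ consists of the two toric branches. Alternatively, one can use the index-one covering trick via Theorem~\ref{thm-1-2-complement}-type reasoning. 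The cleaner route is the following classification. If the coregularity-$0$ dual complex is attained, then the dlt modification has a chain of lc places over $x$. Contracting them exhibits $x\in X$ as a quotient of a toric germ by a finite group acting on a cycle or chain; the resulting germ is either cyclic ($A_n$) or, when a $\mathbb{Z}/2$ acts, of type $D_n$ (the $D_n$ case is realized by $2$-complements as in Proposition~\ref{prop-2D4}). For $E_n$ the dual graph has a trivalent vertex with branches $(2,3,k)$, $k=3,4,5$, which cannot arise: coregularity $0$ forces every branch of the graph off the lc-center chain to be a $(2,2)$-fork, giving only the $D$-type configurations.

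The key step, and the expected obstacle, is justifying this classification of germs carrying an lc structure with a $1$-dimensional dual complex. I would try first to cite the classification of lc surface singularities with reduced boundary (Kollár, \emph{Flips and abundance}, Ch.~3; or \cite{KM98}, 4.1): for $(X,D)$ lc with $\lfloor D\rfloor$ containing an lc center at $x$, the graph is a chain, or a chain with two $(-2)$-forks at the end. Such graphs are of type $A$ or $D$, never $E$. Alternatively, I would verify directly by solving the linear system above: on $E_6,E_7,E_8$, any nef combination with some $a_i=1$ forces a coefficient $>1$.
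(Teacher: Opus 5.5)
Your $D_n$ argument is correct and more self-contained than the paper's, which just cites Moraga's Proposition 3.11. For a reduced Cartier $D$ through $x$, the cycle $\pi^*D-\pi^{-1}_*D$ is nonzero, effective and has nonpositive intersection with every $E_j$. It therefore dominates the fundamental cycle, whose coefficient $2$ on the $D_n$ chain gives log discrepancy $\leqslant -1$. The $E_n$ part, however, has a genuine gap: it is never proved, and both concrete routes you propose fail as stated.

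\textbf{The ``direct verification'' is false.} On $E_8$, let $C$ be a curve germ (automatically Cartier, since $E_8$ is factorial) whose strict transform meets the end curve of the long arm transversally once. Then $\pi^*(K_X+\frac16 C)=K_{\tilde X}+\frac16\tilde C+\frac16 Z_{\mathrm{fund}}$. The cycle $\frac16 Z_{\mathrm{fund}}$ has coefficient exactly $1$ on the trivalent curve and coefficients $<1$ elsewhere. So $(X,\frac16 C)$ is an lc local $6$-complement with a divisor of log discrepancy $0$. $E$-type germs do carry non-klt complements; what must be excluded is a \emph{second} lc place.

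\textbf{The classification you want to cite does not cover the situation.} The same example shows it is false if ``lc center at $x$'' means ``$x$ is an lc center.'' If it means ``a component of $\lfloor D\rfloor$ passes through $x$'' (Koll\'ar's classification with reduced boundary), it is correct, but it misses complements whose lc places are all exceptional over $x$. These do occur, even on a smooth germ. Take $\frac12(C_1+C_2+C_3+C_4)$ with $C_1,C_2$ tangent smooth curves and $C_3,C_4$ further lines in distinct directions; blowing up twice produces two intersecting exceptional lc places. Your opening claim that nonempty $\lfloor D\rfloor$ plus a one-dimensional dual complex forces a cyclic quotient is also contradicted, by Proposition~\ref{prop-2D4} on a $D_4$ point. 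The reduction to the reduced-boundary case, and the assertion that branches off the lc chain must be $(2,2)$-forks, are exactly the content left unproved.

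\textbf{The missing ingredient is the fact the paper uses.} $E_6$, $E_7$, $E_8$ are exceptional in Shokurov's sense: every lc complement has at most one divisor of log discrepancy $0$. Coregularity $0$ would give two lc places meeting at a point over $x$, and blowing up that point produces a third, a contradiction. If you want to stay with your linear system instead, the correct statement is weaker: at most one $a_i$ can equal $1$, namely the trivalent vertex, because in every row of the inverse Cartan matrix of $E_6,E_7,E_8$ the trivalent entry is strictly largest. You would still need inversion of adjunction along that curve, plus connectedness of the non-klt locus, to rule out lc places that appear only after further blow-ups. This includes the case where no $a_i$ equals $1$ but some point of $\tilde X$ is non-klt.
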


\begin{proof}
The first claim follows from \cite[Proposition 3.11]{Mo24}. 
The second claim follows from the fact that type $E$ singularity is exceptional, which means that for any divisor $D$ the pair $(X, D)$ admits at~most one exceptional divisor over $x\in X$ with log-discrepancy $0$. Indeed, if $(X, D)$ is a~complement of coregularity $0$, in the log-resolution $(Y, D_Y)$ of $(X, D)$ there will be two intersecting curves $E_1$ and $E_2$ in $D_Y^{=1}$. Blowing-up the point of  intersection of $E_1$ and $E_2$ we would obtain another curve over $x\in X$ with log-discrepancy $0$. This contradiction completes the proof. 
\end{proof}

\begin{lemma}
\label{lem-x1e7'}
Let $X$ be a del Pezzo surface $X$ of degree $1$ with du Val singularities such that the~configuration of singular fibers of the corresponding elliptic fibration $\phi_Y$ is $III+III^*$. 
In~particular, $\mathrm{Sing}(X)=E_7+A_1$. 
Then $\coreg(X) = 1$.
\end{lemma}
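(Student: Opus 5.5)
By Proposition~\ref{prop-isotrivial} we already have $\coreg_1(X)=1$, so by Theorem~\ref{thm-1-2-complement} it suffices to show $\coreg_2(X)=1$. That is, for every $D\in|-2K_X|$ with $(X,\tfrac12 D)$ lc, we want $\dim\mathcal{D}(X,\tfrac12 D)\leqslant 0$. By Remark~\ref{rem-equations-j-1728}, $X$ is given in $\mathbb{P}(1,1,2,3)$ by
\[
w^2+z^3+x^3yz=0 .
\]
The ramification curve $z(z^2+x^3y)=0$ is reducible. Its two components meet at $P_1=(0:1:0:0)$, which is the $E_7$ point, and at $P_2=(1:0:0:0)$, which is the $A_1$ point. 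Following the proof of Proposition~\ref{prop-irreducible-coreg}, we split the elements $D\in|-2K_X|$ into three cases.

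\emph{Case 1: $D=D_1+D_2$ with $D_i\in|-K_X|$.} Each $D_i$ is the image of a fiber of $\phi_Y$, and these fibers have types $I_0$, $III$ or $III^*$. If $D_1=D_2$ is the image of a singular fiber, Table~1 gives lct $<1$, so $(X,D_1)$ is not lc. If $D_1=D_2$ is smooth, the dual complex is a single point. If $D_1\neq D_2$, the two curves meet transversally only at the base point, and by Table~1 each $\tfrac12 D_i$ is klt, since the lct is at least $\tfrac14>0$ more than $\tfrac12$. Hence $(X,\tfrac12 D)$ is klt. In all subcases $\dim\mathcal{D}\leqslant 0$.

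\emph{Case 2: $D=\{z+g_2(x,y)=0\}$ with $g_2\not\equiv 0$.} Substituting gives $D\colon w^2=g_2(g_2^2+x^3y)$, a double cover of a line, and the right-hand side is nonzero. Away from $\mathrm{Sing}(X)$ the argument of Proposition~\ref{prop-irreducible-coreg} applies verbatim: locally $D$ is $w^2=x^a$, so $(X,\tfrac12D)$ is klt there. It remains to handle $P_1$ and $P_2$ when $D$ passes through them.
\begin{itemize}
\item[$\bullet$] At the $E_7$ point $P_1$ we apply Lemma~\ref{lem-local-coreg}: the germ admits no complement of coregularity $0$, so there is at most one lc place over $P_1$, and it cannot be adjacent to any other lc place.
\item[$\bullet$] At the $A_1$ point $P_2$ we check explicitly. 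In the chart $x=1$ the equation is $w^2+z^3+yz=0$ and $D$ is $z=-g_2(1,y)$. We would blow up the origin, as in the proof of Proposition~\ref{prop-2D4}, and check that $(X,\tfrac12 D)$ is klt, or at worst has a single lc place.
\end{itemize}
Since the lc places then lie over distinct points, no two of them meet, so $\dim\mathcal{D}\leqslant0$.

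\emph{Case 3: $g_2\equiv0$.} Then $D=2C$ with $C=\{z=w=0\}\cong\mathbb{P}^1$, and $C$ passes through both $P_1$ and $P_2$. We show $(X,C)$ is not lc at $P_1$.
\begin{itemize}
\item[$\bullet$] If $(X,C)$ were plt near $P_1$, then by the classification of plt pairs $(P_1\in X)$ would be a cyclic quotient singularity, which $E_7$ is not.
\item[$\bullet$] If $(X,C)$ were lc but not plt at $P_1$, there would be an exceptional lc place over $P_1$ adjacent to $C$. This gives a local complement of coregularity $0$, contradicting Lemma~\ref{lem-local-coreg}.
\end{itemize}
Hence this $D$ is not a complement at all.

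Combining the three cases gives $\coreg_2(X)=1$, and therefore $\coreg(X)=1$. I expect the main obstacle to be Case~2 at the singular points. In particular, one must make sure that a curve $D$ passing through both $P_1$ and $P_2$ cannot produce lc places over each of them that are joined through the strict transform of $D$. Since $D$ has coefficient $\tfrac12<1$, it is not itself an lc place, so it cannot connect them. I would confirm this by an explicit weighted blow-up computation at $P_2$, and at $P_1$ if needed.
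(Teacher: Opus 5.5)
Your proposal is correct and is essentially the paper's proof. Like the paper, you reduce to $\mathrm{coreg}_2(X)=1$ via Proposition~\ref{prop-isotrivial} and Theorem~\ref{thm-1-2-complement}, handle $\{z=0\}$ with Lemma~\ref{lem-local-coreg} at the $E_7$ point, and handle $\{z+f_2=0\}$ as in Proposition~\ref{prop-irreducible-coreg}. Your deferred local check at the $A_1$ point does go through: on the minimal resolution the pullback is $\widetilde{D}+E$, with $\widetilde{D}$ either transverse or simply tangent to $E$, so the pair is klt there.

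There is one slip in Case~1. The curve through $P_1$ comes from the $III^*$ fiber, whose lct is $\tfrac14<\tfrac12$, so $(X,\tfrac12 D_1+\tfrac12 D_2)$ is not lc, rather than klt as you claim. This is harmless, since such a pair is not a complement. More generally, any Cartier divisor through $P_1$ pulls back to the minimal resolution with coefficient at least $4$ on the central (trivalent) curve of the $E_7$ configuration, because its exceptional part dominates the fundamental cycle. Hence no pair $(X,\tfrac12 D)$ with $D\in|-2K_X|$ passing through $P_1$ is lc, which disposes of $P_1$ uniformly in all three of your cases.
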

\begin{proof}
According to Remark \ref{rem-equations-j-1728}, the surface $X$ is given by the following  equation in $\mathbb{P}(1,1,2,3)$:
$$
w^2 + z(z^2 + x^3y) = 0.
$$
We have $\mathrm{coreg}_1(X)=1$ by Proposition~\ref{prop-isotrivial}. We show that $\mathrm{coreg}_2(X)=1$. 
The singular points of~$X$ have coordinates $P_1=(0:1:0:0)$ and $P_2=(1:0:0:0)$. In the affine charts $\mathbb{C}^3$ of $\mathbb{P}(1,1,2,3)$ that contain $P_1$ (resp., $P_2$), the equation of $X$ becomes
$$
w^2 + z^3 + x^3z = 0
$$
and
$$
w^2 + z^3 + yz=0,
$$
respectively. 
Consider the divisor $D$ given by the equation $z=0$ which passes through both points~$P_1$ and~$P_2$. By Lemma \ref{lem-local-coreg}, we see that the pair $(X, \frac{1}{2}D)$ cannot have coregularity $0$ near~$P_1$. By~a~straightforward computation, we see that $(X, \frac{1}{2}D)$ does not have coregularity $0$ near~$P_2$. 

As for the pairs $(X,\frac{1}{2}B)$ where $B\neq D$, $B\in |-2K_X|$, and $B=\{z+f_2(x,y)=0\}$, arguing as in the proof of Proposition \ref{prop-irreducible-coreg} we have that $(X,\frac{1}{2}B)$ is klt. It follows that $\mathrm{coreg}_2(X)=1$, and so~${\mathrm{coreg}(X)=1}$.
\end{proof}

\begin{lemma}
\label{lem-x1d42a1}
Let $X$ be a del Pezzo surface $X$ of degree $1$ with du Val singularities such that the~configuration of singular fibers of the corresponding elliptic fibration $\phi_Y$ is $I_0^*+2III$. 
In~particular, $\mathrm{Sing}(X)=D_4+2A_1$. 
Then $\coreg(X) = 0$.
\end{lemma}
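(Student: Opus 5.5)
By Remark \ref{rem-equations-j-1728}, $X$ is given in $\mathbb{P}(1,1,2,3)$ by
$$
w^2 + z\bigl(z^2 + x^2y(x-y)\bigr) = 0 .
$$
Since $\phi_Y$ is isotrivial, Proposition \ref{prop-isotrivial} gives $\coreg_1(X)=1$. So, following the proof of Proposition \ref{prop-2D4}, I will look for a $2$-complement of coregularity $0$. The natural candidate is $D=\{z=0\}\in|-2K_X|$. On $X$ it equals $2D'$ with $D'=\{z=w=0\}$ reduced and irreducible, so $(X,D')$ is a Calabi--Yau pair once it is lc. The points $P_1=(0:1:0:0)$, $P_2=(1:0:0:0)$ and $P_3=(1:1:0:0)$ lie on $D'$; these are the roots of $f_4$, and they are exactly the singular points of $X$: $P_1$ is the $D_4$ point and $P_2,P_3$ are the $A_1$ points. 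Away from these points, $D'$ is the preimage of the smooth curve $\{z=0\}\subset\mathbb{P}(1,1,2)$ sitting inside the branch locus $R$, so $X$ and $D'$ are both smooth there.

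Near $P_1$, in the chart $y=1$, the equations become $w^2 + z^3 + x^2(x-1)z = 0$. After an analytic change of coordinates this is $w^2 = xz(z+x)$ up to units, possibly after rescaling $x$ by a unit, i.e. the form $w^2=-z(z^2+u x^2)$ with $u$ a unit. This is exactly the local model in the proof of Proposition \ref{prop-2D4}, with $D=\{z=0\}$. The blow-up computation there then applies verbatim and shows that $(X,D')$ is lc at $P_1$ with an exceptional divisor of log discrepancy $0$. In fact it produces a strictly lc pair, an ordinary double point with two branches $xz=0$, so the dual complex near $P_1$ already contains an edge.

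Near $P_2$ and $P_3$ the local equation has the form $w^2 + z(z^2 + u\,x) = 0$ with $u$ a unit. Taking $x' = z^2+ux$ as a new coordinate gives $w^2 = -zx'$, an $A_1$ point, and $D'=\{z=w=0\}$ is one of the two rulings through it. A line through an $A_1$ point is plt, with discrepancy coefficient $1/2$ on the exceptional curve. Hence $(X,D')$ is lc everywhere, so $D'$ is a $1$-complement in the sense of $\tfrac12|-2K_X|$.

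Finally, I need $\dim\mathcal{D}(X,D')=1$. On the log resolution, the strict transform of $D'$ together with the log-discrepancy-$0$ curves over $P_1$ forms a chain (in fact the dual complex should be a segment). Therefore $\coreg_2(X)=0$, and $\coreg(X)=0$. The main obstacle is the local analysis at the $D_4$ point: checking that the coordinate change really brings it to the model of Proposition \ref{prop-2D4}. If that fails, I would blow up $P_1$ directly in the chart and verify that the strict transform meets the exceptional locus in two log-canonical centres.
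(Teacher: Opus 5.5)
Your proposal is correct and is essentially the paper's argument: the paper also uses the pair $(X,\tfrac12 g^*C_1)$ with $C_1=\{z=0\}\subset\mathbb{P}(1,1,2)$. It observes that $C_1$ passes through the node of the other branch component $C_2$ at the image of the $D_4$ point, and meets $C_2$ transversally at the images of the two $A_1$ points. Your local computations (reduction to the model of Proposition~\ref{prop-2D4} at $P_1$, and a ruling through an $A_1$ point at $P_2,P_3$) make explicit what the paper leaves implicit.

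One small wording slip: $D'$ is not Cartier at the $A_1$ points, so it is a $2$-complement (an element of $\tfrac12|-2K_X|$), not a ``$1$-complement''. This is consistent with your conclusion $\coreg_2(X)=0$.
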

\begin{proof}
According to Remark \ref{rem-equations-j-1728}, the surface $X$ is given by the following  equation in $\mathbb{P}(1,1,2,3)$:
$$
w^2 + z(z^2 + x^2y(x-y)) = 0
$$
We claim that $\mathrm{coreg}(X)=0$ in this case. Note that the ramification curve $R$ is given in $\mathbb{P}(1,1,2)$ by the equation $z(z^2 + x^2y(x-y)) = 0$. Thus $R$ consists of two irreducible components $C_1\sim H$ and~${C_2\sim 2H}$ where $H$ is a hyperplane section of $\mathbb{P}(1,1,2)\subset \mathbb{P}^3$. Then $C_1$ is a smooth curve, and~$C_2$ has a node at the point $(0:1:0)$ that is the image of $D_4$-singularity. Also, $C_1\cdot C_2=2H^2=4$, so~$C_1$ intersects $C_2$ in two more points $P_1$ and $P_2$ transversally.
Considering the pair $(X,\frac{1}{2}g^*(C_1))$, we conclude that $\mathrm{coreg}(X)=0$.
\end{proof}

\begin{lemma}
\label{lem-x14a1}
Let $X$ be a del Pezzo surface $X$ of degree $1$ with du Val singularities such that the configuration of singular fibers of the corresponding elliptic fibration $\phi_Y$ is $4III$. 
In particular, $\mathrm{Sing}(X)=4A_1$. 
Then $\coreg(X) = 1$.
\end{lemma}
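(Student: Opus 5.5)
The plan is to show $\mathrm{coreg}_2(X)=1$. Proposition~\ref{prop-isotrivial} already gives $\mathrm{coreg}_1(X)=1$, so Theorem~\ref{thm-1-2-complement} then yields $\mathrm{coreg}(X)=1$. By Remark~\ref{rem-equations-j-1728} we may take $X$ to be
$$
w^2+z\bigl(z^2+f_4(x,y)\bigr)=0,\qquad f_4=xy(x-y)(x-ly),
$$
in $\mathbb{P}(1,1,2,3)$. Its four $A_1$ points are $z=w=0$ with $(x:y)$ running over the four roots of $f_4$. Near each of them, after an analytic coordinate change, $X$ is $w^2+z^3+tz=0$, where $t$ is a local coordinate on the base vanishing simply at the root. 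We go through the elements $D\in|-2K_X|$ and show that $\dim\mathcal{D}(X,\frac12 D)\leqslant 0$ whenever $(X,\frac12D)$ is lc.

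\emph{Case $D=D_1+D_2$ with $D_i\in|-K_X|$.} The $D_i$ correspond to fibers of $\phi_Y$, which are smooth or of type $III$. If $D_1=D_2$, the pair is $(X,D_1)$. It is not lc when $D_1$ is of type $III$, since the log canonical threshold is $3/4<1$ by Table~1. When $D_1$ is smooth, $\mathcal{D}$ is a single point. If $D_1\neq D_2$, both coefficients equal $\frac12$. The two curves meet only at the base point $O$, transversally. At any $A_1$ point only one of them passes, and it does so with coefficient $\frac12<3/4$, again by Table~1. Hence the pair is klt and $\mathcal{D}=\varnothing$.

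\emph{Case $D=\{z=0\}$.} Here $D=2D'$ with $D'=\{z=w=0\}\cong\mathbb{P}^1$, and we must show $(X,D')$ is plt, not merely lc. Near each $A_1$ point the curve $D'$ is a line through the vertex of the quadric cone $w^2+z(z^2+t)=0$. This is checked in the chart $t$, $z$ after noting $z^2+t$ is a local unit times a coordinate. A line through the vertex of an $A_1$ cone is plt: on the minimal resolution the exceptional curve gets coefficient $\frac12$. So $\mathcal{D}(X,D')$ is a point and $\dim=0$.

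\emph{Case $D=\{z+g_2(x,y)=0\}$ with $g_2\neq0$.} Substituting, $D$ becomes the curve $w^2=g_2(g_2^2+f_4)$ in $\mathbb{P}(1,1,3)$. The sextic on the right is nonzero, because $g_2\neq 0$ and $g_2^2+f_4\neq 0$ ($f_4$ is not a square). Away from $\mathrm{Sing}(X)$, the argument of Proposition~\ref{prop-irreducible-coreg} applies verbatim and $(X,\frac12D)$ is klt there. The main obstacle is at an $A_1$ point through which $D$ passes, i.e.\ when $g_2$ vanishes at that root of $f_4$. Locally $D=\{z=-g(t)\}$ with $g(0)=0$. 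Restricted to $D$, the equation becomes $w^2=g(g^2+ut)$ with $u$ a unit, which behaves like $w^2\sim t^2$ (if $g'(0)\neq0$) or $w^2\sim t^3$. So $D$ is a Cartier curve with a node or cusp at the vertex. I would compute on the blow-up of the vertex, as in Proposition~\ref{prop-2D4}: the exceptional conic appears in $h^*D$ with multiplicity $1$, so in $\frac12 h^*D$ with coefficient $\frac12$. The strict transform consists of one or two branches of coefficient $\frac12$ meeting it transversally or tangentially. The expectation is that this pair is klt. Even if some degenerate choice made it strictly lc, the non-klt locus would be a single point, again giving $\dim\mathcal{D}\leqslant0$.

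Combining the three cases gives $\mathrm{reg}_2(X)\leqslant 0$, i.e.\ $\mathrm{coreg}_2(X)=1$, and the lemma follows from Theorem~\ref{thm-1-2-complement}.
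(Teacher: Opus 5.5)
Your strategy is the paper's: $\mathrm{coreg}_1(X)=1$ from isotriviality, then $\mathrm{coreg}_2(X)=1$ by running through $|-2K_X|$, then Theorem~\ref{thm-1-2-complement}. Your treatment of $D=D_1+D_2$ and of $D=\{z=0\}$ is correct.

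\textbf{The gap.} It lies in the one case that actually needs work: $D=\{z+g_2=0\}$ passing through an $A_1$ point. There you leave klt-ness as an ``expectation'', and the fallback you offer is false. An lc pair whose only non-klt point is a single point can have a one-dimensional dual complex. For example, on $\mathbb{C}^2$ take $\frac12(C_1+C_2+C_3+C_4)$ with $C_{1,2}=\{y=\pm x^2\}$ and $C_{3,4}=\{x=\pm y^2\}$. The exceptional curve $E_1$ of the blow-up of the origin gets coefficient $1$. Blowing up the point where $C_1',C_2'$ meet $E_1$ produces a second coefficient-one curve meeting $E_1$, so $\dim\mathcal{D}=1$, although near the origin the pair is lc and klt away from the origin. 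So a degenerate strictly lc case would not be harmless, and the local computation has to be carried out.

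\textbf{How to close it.} The computation is short.
\begin{enumerate}
\item In your local model put $\sigma=z^2+ut$. Then $X$ is $w^2+z\sigma=0$, i.e.\ $\mathbb{C}^2_{a,b}/\pm1$ via $z=a^2$, $\sigma=-b^2$, $w=ab$.
\item The quotient map $q$ is unramified in codimension one. Hence $(X,\frac12D)$ is klt if and only if $(\mathbb{C}^2,\frac12q^*D)$ is.
\item Since $t=(\sigma-z^2)\cdot(\text{unit})$, pulling back $z+g(t)$ gives two cases. If $g'(0)\neq0$, you get $a^2-cb^2+\dots$ with $c\neq0$, an ordinary node with lct $1$. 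If $g_2$ vanishes doubly at the root, you get $a^2+c'b^4+\dots$ with $c'\neq0$, a tacnode with lct $\frac34$.
\item Both thresholds exceed $\frac12$. So $(X,\frac12D)$ is klt at the $A_1$ points, and together with your argument away from $\mathrm{Sing}(X)$ this gives $\mathcal{D}(X,\frac12D)=\varnothing$.
\end{enumerate}
Equivalently, on the minimal resolution $D_Z$ is either two curves meeting $E$ transversally at distinct points, or one smooth curve simply tangent to $E$; in both cases $\frac12(D_Z+E)$ is klt. This confirms your geometric description.

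With this inserted your proof is complete, and it is exactly the ``local computation involving blowing-up of singular points'' that the paper's proof invokes without details.
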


\begin{proof}
According to Remark \ref{rem-equations-j-1728}, the surface $X$ is given by the following  equation in $\mathbb{P}(1,1,2,3)$:
$$
w^2 + z(z^2 + xy(x-y)(x-ly)) = 0
$$
We claim that $\mathrm{coreg}(X)=1$. We have $\mathrm{coreg}_1(X)=1$ by Proposition~\ref{prop-isotrivial}. The ramification curve $R$ is reducible. Therefore we only need to consider the divisor $D$ given by the equation~${z+f_2(x,y)=0}$. In this case, by a local computation involving blowing-up of singular points of $X$ we obtain $\mathrm{coreg}_2(X)=1$. We conclude that $\mathrm{coreg}(X)=1$.
\end{proof}

Now we are ready to prove the main theorem of this section. 

\begin{proof}[{Proof of Theorem \ref{thm-dp1-main}}]
By Proposition \ref{prop-smooth-dP}, we may assume that $X$ is not smooth. By Lemma~\ref{lem-dp2}, if~${(-K_X)^2\geqslant 2}$ then $\mathrm{coreg}_1(X)=0$. So we may assume that $(-K_X)^2 = 1$. 
We shall use the~canonical correspondence~\ref{notation-dp-elliptic-surface}. In particular, by $\phi_Y\colon Y\to \mathbb{P}^1$ we denote the corresponding elliptic fibration. If $\phi_Y$ is not isotrivial, then $\coreg(X) =0$ by Proposition \ref{prop-isotrivial}. Hence we may assume that $\phi_Y$ is~isotrivial, and the $j$-function on the base $\mathbb{P}^1$ is constant. In this case $\coreg_1(X) = 1$. The possible configurations of singular fibers of $\phi_Y$ are listed in Remark \ref{rem-possible-singular-fibers}. 

If $j\notin \{0, 1728\}$, then the configuration of singular fibers of $\phi_Y$ is $2I_0^*$. In this case, $\mathrm{coreg}(X)=0$ by Proposition \ref{prop-2D4}. Assume that $j=0$. We deal with this case in Section \ref{sec-j-0}. 
In Proposition \ref{prop-j0-coreg-1-except-for-d4} we give a criterion when $\mathrm{coreg}(X)=1$ for such surfaces. 
Possible equations of the surfaces are given in Remark \ref{rem-equations-j-0}.
Assume that $j=1728$. We deal with this case in Section \ref{sec-j-1728}. Possible equations of the surfaces are given in Remark \ref{rem-equations-j-1728}. We consider all the possible cases in Proposition~\ref{prop-2D4} and  Lemmas~\ref{lem-x1e7'}, \ref{lem-x1d42a1} and \ref{lem-x14a1}. 

We conclude that  $\mathrm{coreg}(X)=0$ if and only either
$\phi_Y$ is not isotrivial, or 
 $\phi_Y$ is isotrivial and~${\mathrm{Sing}(X)\in \{2D_4, D_4+2A_1\}}$.
Otherwise, $\mathrm{coreg}(X)=1$. 
This finishes the proof.
\end{proof} 

Proof of Theorem \ref{main-thm2} (resp., Theorem \ref{cor-main}) follows from the proof of Theorem \ref{thm-dp1-main} keeping in~mind lists of configurations of singular points on $X$ (resp., singular fibers of $\phi_Y$) given in Table 2 and~Table~3. 
By Remarks~\ref{rem-equations-j-0-bad} and~\ref{rem-equations-j-1728-bad}, there exist examples of du Val del Pezzo surfaces of~degree~$1$ and~Picard rank $1$ with coregularity $0$ and with the given configurations of singular points.


\begin{proof}[{Proof of Theorem \ref{cor-toric-models}}] 
By Lemma \ref{lem-dp2}, if $(-K_X)^2\geqslant 2$, then $\mathrm{coreg}_1(X)=0$, so $X$ admits a toric model by \cite[Lemma 1.13]{GHK15}. Hence we may assume that $(-K_X)^2=1$. 
Assume that $\phi_Y$ is not isotrivial. 
Then $j$-invariant function has a pole on $\mathbb{P}^1$, and therefore $\phi_Y$ has a fiber of type $I_n$ or~$I_n^*$, where $n \geqslant 1$, see Table~$1$. If the first possibility is realized, then $\mathrm{coreg}_1(X)=0$ by Lemma \ref{lem-fiberscoreg}, which is equivalent to the existence of a toric model by \cite[Lemma 1.13]{GHK15}. 

Assume that $\phi_Y$ has a fiber of type $I_n^*$, where $n \geqslant 1$, and no fibers of type $I_n$ for $n\geqslant 1$. According to Table 1, this means that $X$ has a $D_m$-singularity for $m\geqslant 5$. 
Using Table 1 and the~fact that~$\chi(Y)=12$, we obtain that only the following configurations of singular fibers of $\phi_Y$ are possible:
\begin{equation}
\label{eq-degenerate-fibers-I^*_n}
I_1^*+III+II, \quad \quad 
I_2^*+2II, \quad \quad 
I_2^*+IV, \quad \quad 
I_3^*+III, \quad \quad 
I_4^*+II.
\end{equation}
By \cite{Mi90}, the last three cases in \eqref{eq-degenerate-fibers-I^*_n} do not occur. In the first two cases, the degree of the $j$-invariant function is equal to $1$ and $2$, respectively. 
In particular, the elliptic fibration $\phi_Y$ is not isotrivial for such surfaces. 
Such surfaces exists as shown in Example \ref{ex-very-special-surfaces}. If $X$ is one of the two surfaces from Example \ref{ex-very-special-surfaces}, then $\mathrm{coreg}_1(X)=1$, and  $\mathrm{coreg}_2(X)=\mathrm{coreg}(X)=0$ (the last equality follows from Theorem \ref{cor-main} combined with Theorem \ref{thm-1-2-complement}). 

Conversely, if $\phi_Y$ is isotrivial, then $\phi_Y$ has no fibers of type $I_n$ or $I_n^*$, where $n \geqslant 1$. By~Lemma~\ref{lem-fiberscoreg} this implies that $\mathrm{coreg}_1(X)=1$. The proof is completed. 
\end{proof}

\begin{example}
\label{ex-very-special-surfaces}
We denote by $X'_1(D_5+A_1)$ (resp., $X'_1(D_6)$) du Val del Pezzo surface whose configuration of singular points is $D_5+A_1$ (resp., $D_6$) with the additional property that the associated elliptic fibration $\phi_Y$ has the configuration of singular fibers $I_1^*+III+II$ (resp., $I_2^*+2II$).

An example of a surface $X'_1(D_5+A_1)$ is given by the following equation in $\mathbb{P}(1,1,2,3)$:
\[
w^2 + z^3 - 3(x-y)xy^2z + 2(x-y)x^2y^3 = 0.
\]
Let us show that for the surface $X$ given by this equation the associated elliptic fibration $\phi_Y$ has the configuration of singular fibers $I_1^*+III+II$. The discriminant of $z^3 - 3(x-y)xy^2z + 2(x-y)x^2y^3$ is equal to
$$
4 \cdot 27(x-y)^3x^3y^6 - 27 \cdot 4(x-y)^2x^4y^6 = -108(x-y)^2x^3y^7.
$$
Therefore $\phi_Y$ has three singular fibres with $\chi(F)$ equal to $2$, $3$ and $7$, respectively, and the $j$-invariant function has degree $1$. This is possible only for the configuration of singular fibers $I_1^*+III+II$ (see Table $1$).

An example of a surface $X'_1(D_6)$ is given by the following equation in $\mathbb{P}(1,1,2,3)$:
\[
 w^2 + z^3 - 3(x^2-y^2)y^2z + 2(x^2-y^2)xy^3 = 0.
\]
Let us show that for the surface $X$ given by this equation the associated elliptic fibration $\phi_Y$ has the configuration of singular fibers $I_2^*+2II$. The discriminant of $z^3 - 3(x^2-y^2)y^2z + 2(x^2-y^2)xy^3$ is equal to
$$
4 \cdot 27(x^2-y^2)^3y^6 - 27 \cdot 4(x^2-y^2)^2x^2y^6 = -108(x^2-y^2)^2y^8.
$$
Therefore $\phi_Y$ has three singular fibres with $\chi(F)$ equal to $2$, $2$ and $8$ respectively, and the $j$-invariant function has degree $2$. This is possible only for the configurations of singular fibers $IV^*+II+I_2$ and $I_2^*+2II$ (see Table $1$). Note that $X$ has an automorphism $(x:y:z:t)\mapsto(ix:-iy:z:t)$, which permutes the singular fibers given by $x = y$ and $x = -y$. Thus these fibers have the same type, the configuration $IV^*+II+I_2$ is impossible, and the configuration $I_2^*+2II$ is achieved on~$X$.

\end{example}
\begin{remark}
Denote by $X_1(2D_4)$ a du Val del Pezzo surfaces of degree $1$ with two $D_4$-singularities, and by $X'_1(E_8)$, $X'_1(E_7+A_1)$, $X'_1(E_6+A_2)$ the surfaces with corresponding configuration of~singularities and no nodal elements in $|-K_X|$. 
As proven in \cite{ChP08} (see also \cite{Vir24}), the surfaces~$X'_1(E_8)$, $X'_1(E_7+A_1)$, $X'_1(E_6+A_2)$ and $X_1(2D_4)$ have infinite automorphism group. In fact, they are the~only du Val del Pezzo surfaces of degree $1$ with infinite automorphism group.
\end{remark}

\begin{remark}
\label{rem-EFM24}In \cite[Theorem 3.3]{EFM24}, it is proven that any del Pezzo surface of Picard rank~$1$, except for 
\begin{equation}
\label{eq-moraga-exceptions}
X'_1(E_8),\quad\quad X'_1(E_7+A_1),\quad\quad X'_1(E_6+A_2),\quad\quad X_1(2D_4)
\end{equation}
are crepant equivalent to a pair with coregularity $0$, and these exceptional surfaces are not crepant equivalent to a toric pair. In other words, the authors computed $\coreg_1(X)$ for du Val del Pezzo surfaces of Picard rank $1$. One can check that the surfaces \eqref{eq-moraga-exceptions} are the only du Val del Pezzo surfaces of Picard rank $1$ such that the corresponding elliptic fibration $\phi_Y$ is isotrivial. Thus, the result of \cite[Theorem 3.3]{EFM24} is consistent with Theorem \ref{cor-toric-models}.  Also note that the surfaces \eqref{eq-moraga-exceptions} are the only du Val del Pezzo surfaces of Picard rank $1$ which are not uniquely determined by their configuration of singularities, see \cite{Ye02}.  


\end{remark}

\section{Del Pezzo surfaces over non-closed fields}
\label{sec-dp-over-k}
In this section, we treat del Pezzo surfaces defined over a field $\mathbb{K}$ of characteristic $0$ which is not necessarily algebraically closed. Note that in \cite{LPT25} the authors considered a geometric analog of this case, that is, the case of del Pezzo surfaces defined over $\mathbb{C}$ endowed with an action of an~automorphism group $G$. 

We start from the following proposition.

\begin{proposition}
\label{prop-dp6789}
Let $\mathbb{K}$ be a field of characteristic $0$, and let $X$ be a del Pezzo surface of degree $6$ or greater with at worst du Val singularities defined over $\mathbb{K}$. Then $\coreg(X) = \mathrm{coreg}_1(X)=0$.
\end{proposition}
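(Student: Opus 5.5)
The plan is to exhibit, for every such $X$, a $\mathbb{K}$-rational divisor $D\in|-K_X|$ such that $(X,D)$ is log canonical with $\dim\mathcal{D}(X,D)=1$. This gives $\coreg_1(X)=0$, hence $\coreg(X)=0$ in the sense of Remark~\ref{rem-non-closed-field}. By Lemma~\ref{lem-singweak} we may pass to the minimal resolution $\pi\colon Z\to X$. Since the minimal resolution is unique, it is Galois-invariant and therefore defined over $\mathbb{K}$. So it suffices to treat a weak del Pezzo surface $Z$ of degree $d\geqslant 6$ over $\mathbb{K}$. Alternatively, we can push forward a complement constructed on some $\mathbb{K}$-birational model by Lemma~\ref{lem-coregpush}.

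The key input is the structure of these surfaces over $\overline{\mathbb{K}}$. They are toric: every du Val del Pezzo surface of degree $\geqslant 6$ is a toric surface, except for the surfaces of degree $6$ with an $A_1$ singularity having four lines (equivalently, three negative curves), and the analogous degree $7$ and $8$ cases, which are quasi-toric. More usefully, the set of negative curves on $Z_{\overline{\mathbb{K}}}$ (the $(-1)$- and $(-2)$-curves) is finite and Galois-stable. For $d\geqslant 6$ I expect these curves to form a configuration whose union, possibly together with suitable extra curves, supports an anticanonical cycle. I would proceed case by case along the classification, which is short for $d\geqslant 6$.

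For $d=9$ we have $X\cong$ a Severi--Brauer surface. Here I would take a Galois-stable triangle of lines if $X\cong\mathbb{P}^2$. If $X$ is a nontrivial Severi--Brauer surface, I would instead take a degree-$3$ point $\{p_1,p_2,p_3\}$ defined over $\mathbb{K}$ (such a point always exists) and the three lines $p_ip_j$. This Galois-stable triangle is a $1$-complement with a circular dual complex. For $d=8$ we have a form of $\mathbb{P}^1\times\mathbb{P}^1$ or of $\mathbb{F}_1$, $\mathbb{F}_2$, or the quadric cone. I would take a Galois-stable cycle: two conjugate fibres from each ruling, or a pair of conjugate lines through a point of degree $\leqslant 2$. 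In the Hirzebruch cases the negative section is defined over $\mathbb{K}$, and we take the negative section, two conjugate fibres, and a section disjoint from it. For $d=6$ the smooth case is the hexagon of $(-1)$-curves, which is automatically Galois-stable and forms a $1$-complement of coregularity $0$. In the singular cases I would take the cycle formed by the $(-1)$- and $(-2)$-curves on $Z$, adding Galois-orbits of curves in a suitable class where needed. For $d=7$ the cycle of the three negative curves closes up with a $\mathbb{K}$-rational conic or line through the relevant points.

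The main obstacle is the nontoric degree-$6$ (and degree-$7$/$8$) singular cases, together with nontrivial Severi--Brauer-type twists. In these cases the naive boundary of negative curves is a chain rather than a cycle, and it must be closed up by a Galois-invariant curve meeting both ends transversally. In the chain case I would first try the following. Contract to a $\mathbb{K}$-minimal model (a conic bundle or a form of $\mathbb{P}^2$ or a quadric). There, find a $\mathbb{K}$-rational anticanonical cycle passing through the blown-up Galois orbits, for example a pair of conjugate lines together with a conic, or a conic and a line tangent appropriately. Then pull it back crepantly and use Lemma~\ref{lem-coregpush} and Lemma~\ref{lem-singweak} to return to $X$.
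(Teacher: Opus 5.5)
\textbf{There is a genuine gap: the singular degree-$6$ case is not handled.} Your overall scheme matches the paper's: pass to the minimal resolution $Z$ via Lemma~\ref{lem-singweak}, exhibit a Galois-stable reduced anticanonical cycle, and push forward along blow-ups via Lemma~\ref{lem-coregpush}. Your cases $d=9$, $d=8$, $d=7$ and smooth $d=6$ are fine. The paper instead reduces $d\geqslant 7$ to $d=6$ by blowing up Galois-stable points, but your direct cycles also work.

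The trouble is the singular degree-$6$ case, which is where the real work lies. What you say there is wrong in ways that matter.

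\textbf{Wrong identification of the non-toric cases.} The $A_1$ surface with four lines is toric, and all du Val del Pezzo surfaces of degree $7,8,9$ are geometrically toric. The two non-toric cases are:
\begin{itemize}
\item the $A_1$ surface with three lines, i.e.\ $\mathbb{P}^2$ blown up at three collinear points;
\item the $A_2$ surface with two lines, i.e.\ $\mathbb{F}_2$ blown up at two points of one fibre $F$.
\end{itemize}

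\textbf{In these two cases the negative curves form a star, not a chain.} In the first case the $(-2)$-curve $\tilde L$ meets all three exceptional curves $E_1,E_2,E_3$. In the second case the $(-2)$-curve $\tilde F$ meets the $(-2)$-section $\tilde E$ and both $(-1)$-curves. The boundary of an lc reduced anticanonical pair has a cycle as dual graph, so it cannot contain a trivalent vertex. Moreover, the $(-1)$-curves may be permuted by Galois. Hence closing up the negative curves by a Galois-invariant curve meeting both ends cannot work here; you must discard the $(-1)$-curves. The cycles that do work are:
\begin{itemize}
\item first case: $\tilde L+\tilde Q$, with $Q$ a $\mathbb{K}$-conic meeting $L$ transversally. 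Contracting the $E_i$ gives $\mathbb{P}^2_{\mathbb{K}}$, because $L$ is a $\mathbb{K}$-line.
\item second case: $\tilde E+\tilde F+\tilde C$, with $C$ a general member of the $\mathbb{K}$-linear system $|E+3F|$ on the form of $\mathbb{F}_2$.
\end{itemize}
Your fallback, a cycle on a minimal model passing through the blown-up orbits and then pulled back crepantly, is the right principle. However, you never construct the cycles. Also, your suggested ``conic and a line tangent'' would produce a tacnode, which is not lc.

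\textbf{The toric singular cases are also unverified.} There too the negative curves form a chain, never a cycle, so you must name the closing curve and check it is defined over $\mathbb{K}$.
\begin{itemize}
\item $A_1$ with four lines: the negative curves sum to $E=2H-E_1-E_2$, and $|-K_Z-E|=|H-E_3|$ is a conic bundle defined over $\mathbb{K}$. A general member meets only the two ends of the $5$-chain, giving a hexagon. This is the paper's argument for this case.
\item $2A_1$ and $A_2+A_1$: the weighted chain has no symmetry, so Galois fixes every negative curve. Then $Z$ has a $\mathbb{K}$-point and is split toric, and its toric boundary is the required complement.
\end{itemize}
Without these constructions, the degree-$6$ singular case is only a plan, not a proof.
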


\begin{proof}
Let  $d = (-K_X)^2$ be a degree of del Pezzo surface. We consider possible values of $d$ from $6$ to $9$. 

Assume that $d = 6$. Then there are six possibilities for the configuration of lines and singular points on $X$ (see \cite[Table 8.4]{Dol12}). Let $Z \to X$ be the minimal resolution of singularities.

At first, consider the cases where $X$ is a $\mathbb{K}$-form of a toric surface (these cases are labeled by (i), (ii), (ii') and (iii') in \cite[Table 8.4]{Dol12}). If $X$ has type (i) then $X$ is non-singular and the union~$D$ of the six $(-1)$-curves is a hexagon of transversally intersecting lines. Moreover, $D \in |-K_X|$. Therefore $(X,D)$ is a log canonical pair, and $\coreg_1(X) = 0$.

For the case (ii) consider a union $E$ of $(-1)$- and $(-2)$-curves on $Z$. Then one has $(-K_Z-E)^2 = 0$, and the linear system $|-K_Z - E|$ gives a conic bundle structure on $Z$. For a general $F \in |-K_Z - E|$ the pair $E+F$ is log canonical. Therefore by Lemma~\ref{lem-singweak} one has $\coreg_1(X) = \coreg_1(Z) = 0$.

For the cases (ii') and (iii') the configuration of negative curves on $Z$ is not symmetric. 
By this we mean that the corresponding weigthed dual graph (which is homeomorphic to a line segment in the~cases (ii') and (iii')) labeled by the self-intersection numbers $-1$ and $-2$ does not admit non-trivial automorphisms.  
Therefore $Z(\mathbb{K}) \neq \varnothing$, and $Z$ is $\mathbb{K}$-rational. Thus $Z$ and $X$ are toric surfaces and a complement $D$ of a torus in $X$ is an anticanonical curve. So $(X,D)$ is a log canonical pair, and $\coreg_1(X) = 0$.

If $X$ has type (i') then $Z$ is a blow up of $\mathbb{P}^2$ at three geometric points lying on a line $L$. If $Q$ is any irreducible conic transversally intersecting $L$ then for $D = L + Q$ the pair $(\mathbb{P}^2, D)$ is log canonical. One can consider the crepant transform $D_Z$ of $D$ on $Z$, and see that
$$
\dim\mathcal{D}(Z, D_Z) = \dim\mathcal{D}(\mathbb{P}^2, D) = 1.
$$
Therefore by Lemma~\ref{lem-singweak} one has $\coreg_1(X) = \coreg_1(Z) = 0$.

If $X$ has type (iii) then $Z$ is a blow up of a Hirzebruch surface $\mathbb{F}_2$ at two geometric points lying on a fiber $F$ of the conic bundle $\mathbb{F}_2 \to \mathbb{P}^1$.
If $E$ is a $(-2)$-section and $C$ is any irreducible curve in~$|-K_X - E - F|$ then for $D = C + E + F$ the pair $(\mathbb{F}_2, D)$ is log canonical. One can consider the crepant transform $D_Z$ of $D$ on $Z$, and see that $\dim\mathcal{D}(Z, D_Z) = \dim\mathcal{D}(\mathbb{F}_2, D) = 1$. Therefore by Lemma~\ref{lem-singweak} one has $\coreg_1(X) = \coreg_1(Z) = 0$.

Now assume that $7 \leqslant d \leqslant 9$. One can blow up $Z$ at $d-6$ geometric points and obtain a weak del Pezzo surface $W$ of degree $6$ by Lemma~\ref{lem-blowup}. The anticanonical linear system $|-K_W|$ maps $W$ onto a surface $Y$ with at worst du Val singularities. We proved that $\coreg_1(Y) = 0$. Therefore $\coreg_1(X) = 0$, since by Lemmas~\ref{lem-singweak} and~\ref{lem-coregpush} one has
$$
\coreg_1(X) = \coreg_1(Z) \leqslant \coreg_1(W) = \coreg_1(Y) = 0.
$$

If $d = 7$ then $X$ is $\mathbb{K}$-rational, and one can blow up any $\mathbb{K}$-point on $Z$ not lying on $(-2)$-curves.

If $d = 8$ and $X \cong \mathbb{F}_1$ then $X$ is $\mathbb{K}$-rational, and we can apply the same reasoning as in the case~${d = 7}$. If $Z \cong \mathbb{F}_2$ then one can find a pair of geometric points defined over $\mathbb{K}$  not lying on a~fiber of the conic bundle $\mathbb{F}_2 \to B$, and not lying on the $(-2)$-curve. The blow up of this pair of~points is a del Pezzo surface of degree $6$.

If $\overline{X} \cong \mathbb{P}^1 \times \mathbb{P}^1$ then we can find two pairs $A_1$, $A_2$ and $B_1$, $B_2$ of the fibers of the projections onto the first and the second factor of $\mathbb{P}^1 \times \mathbb{P}^1$ respectively, such that the divisor $D = A_1 + B_1 + A_2 + B_2$ is~defined over $\mathbb{K}$. Therefore $\coreg_1(X) = 0$.

If $d = 9$ then $X$ is a Severi--Brauer surface, and there exists a triple of geometric points defined over $\mathbb{K}$ and  not lying on a line. The blow up of this triple is a del Pezzo surface of degree $6$. 
Arguing as above we see that $\mathrm{coreg}(X)=\mathrm{coreg}_1(X)=0$. This completes the proof.
\end{proof}

\begin{remark}
Note that in geometric case $G$-coregularity of del Pezzo surfaces of degree $9$ and $8$ can be greater than $0$ for certain groups $G$ (see \cite[Propositions~2 and~4.2.5]{LPT25}).
\end{remark}

\subsection{Cubic surfaces}

Now we pass to the case of del Pezzo surfaces of degree $3$ or greater with at~worst du Val singularities. Note that a del Pezzo surface of degree $3$ is a cubic surface in $\mathbb{P}^3$, and~therefore it is convenient to work with such surfaces. Moreover, we can apply Lemmas~\ref{lem-blowup} and~\ref{lem-coregpush} to reduce cases of del Pezzo surfaces of degree $4$ or greater to the case of cubic surfaces.

At first we want to study geometry of cubic surfaces. Therefore we can assume that the field~$\mathbb{K}$ is~algebraically closed.
Let $X$ be a cubic surface in $\mathbb{P}^3$. Then any element $D$ of $|-K_X|$ is an~intersection of $X$ and a plane. Coregularity $0$ can be achieved on the pair $(X, D)$ only in the following three cases:
\begin{itemize}
\item $D$ is a nodal cubic curve;
\item $D$ is a union of an irreducible conic and a line transversally intersecting this conic at two points;
\item $D$ is a triple of lines that do not have a common point.
\end{itemize}
Note that in all these cases $D$ contains at least one singular point. Denote such a point by $P$. Then either $P$ is singular on $X$, or $P$ is smooth on $X$ and $D = X \cap T_P X$, where $T_P X$ is the tangent plane of $X$ at $P$.

Conversely, for a smooth point $P \in X$, the intersection $D = X \cap T_P X$ is always a plane cubic curve singular at $P$. There are three other possibilities for $D$ which were not listed before:
\begin{itemize}
\item $D$ is a cuspidal cubic curve;
\item $D$ is a union of an irreducible conic and a line tangent to this conic;
\item $D$ is a triple of lines passing through a common point (in this case $D$ can contain a double or triple line).
\end{itemize}
In all these cases the pair $(X, D)$ is not log canonical. Note that these three cases exactly describe situations where $P$ is a \emph{parabolic point} of $X$ (see \cite[Section 1.1.5]{Dol12} for the definition). In~particular, for a smooth non-parabolic point $P$ on $X$ and $D = X \cap T_P X$ the pair $(X, D)$ is log canonical and the dual complex $\mathcal{D}(X, D)$ is one-dimensional. 

\begin{definition}
For a polynomial $F(x_1, \ldots, x_n)$ denote by $H(F)$ the \emph{Hessian matrix} of $F$:
$$
H(F) = \left( \dfrac{\partial^2 F}{\partial x_i \partial x_j} \right).
$$
The \emph{Hessian surface} $\operatorname{He}(X)$ of a hypersurface $X$ is defined by the equation $\operatorname{det} H(F) = 0$, where $F$ is a polynomial defining $X$, cf. \cite[Section 1.1.4]{Dol12}.
\end{definition}

From the results of \cite[Section 1.1.5]{Dol12} we deduce the following lemma.

\begin{lemma}
\label{lem-dens3itd}
Let $\mathbb{K}$ be a field of characteristic $0$, and let $X$ be a del Pezzo surface of degree $3$ with at worst du Val singularities defined over $\mathbb{K}$. Then there exists an open subset $U \subset X$, such that for any $P \in U$ the point $P$ is smooth on $X$, and for $D = X \cap T_P X$ the pair $(X, D)$ is log canonical and the dual complex $\mathcal{D}(X, D)$ is one-dimensional.
\end{lemma}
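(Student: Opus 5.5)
Since $U$ is required to be open in $X$ (a $\mathbb{K}$-scheme), it suffices to work over $\overline{\mathbb{K}}$ and produce a Galois-invariant, hence $\mathbb{K}$-defined, open subset. Take
\[
U = X \setminus \bigl(\mathrm{Sing}(X) \cup (X\cap \operatorname{He}(X))\bigr).
\]
Both $\mathrm{Sing}(X)$ and $\operatorname{He}(X)$ are defined by equations with coefficients in $\mathbb{K}$, since the Hessian determinant of a $\mathbb{K}$-polynomial is a $\mathbb{K}$-polynomial. Hence $U$ is defined over $\mathbb{K}$. By the discussion preceding the lemma, it then remains to check two things.

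\textbf{The parabolic points of the smooth locus are exactly $X\cap\operatorname{He}(X)$.} By \cite[Section 1.1.5]{Dol12}, a smooth point $P$ is parabolic precisely when the Hessian vanishes at $P$, i.e.\ $P\in \operatorname{He}(X)$. Equivalently, the tangent cone at $P$ of the plane cubic $X\cap T_PX$ is a double line, which is the cusp, tangency, or concurrent-lines situation. For $P\in U$, the curve $D=X\cap T_PX$ therefore has an ordinary double point at $P$. Going through the three listed shapes (nodal cubic; conic plus secant line; three non-concurrent lines), one sees that $D$ is a reduced plane cubic with only nodes. To see that it cannot have worse singularities elsewhere: a reduced cubic with a node at $P$ is one of these three shapes, or else a conic plus a tangent line, or three concurrent lines. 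The last two are excluded because their singular point at $P$ (or the tangency point) would then be non-nodal at $P$, or would sit at another point $Q$. In the latter case I would simply also remove the closed set of such $P$; see the obstacle below.

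\textbf{Log canonicity and the dual complex.} Since $P\in U$ is smooth, $D$ is Cartier in $|-K_X|$. If $D$ passes through a du Val point of $X$, pull back to the minimal resolution $Z$, where $K_Z+\pi^*D=\pi^*(K_X+D)$. The pair $(X,D)$ is lc and $\mathcal{D}(X,D)$ has dimension $1$ as soon as $D$ is nodal and avoids $\mathrm{Sing}(X)$: near each node it is snc after one blow-up, and a node gives an edge (a loop or a cycle) in the dual complex. So $\dim\mathcal{D}=1$.

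\textbf{Main obstacle.} The delicate point is to ensure that for general $P$ the section $D$ avoids $\mathrm{Sing}(X)$ and is non-degenerate at its other points. For this I would shrink $U$ further. The set of $P$ such that $T_PX$ passes through a given singular point $S$ is the intersection of $X$ with the polar quadric of $S$; since $S$ is singular, this is a proper closed subset as long as $X$ is not a cone, which holds because du Val cubics are not cones. This set is Galois-stable as a whole. Likewise, the locus of $P$ for which $D$ is reducible with a non-transversal intersection is a proper closed condition, by a dimension count on the dual map $X\dashrightarrow(\mathbb{P}^3)^\vee$, which is generically finite because $X$ is not a cone. Nonemptiness of the resulting $U$ follows because the Hessian does not contain $X$, which is true for a normal cubic surface that is not a cone \cite[Section 1.1.5]{Dol12}.
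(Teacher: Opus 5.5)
Your proposal is correct and follows the paper's proof: $U$ is the complement of $\operatorname{He}(X)$ (which already contains $\mathrm{Sing}(X)$), and for a smooth non-parabolic $P$ the section $D=X\cap T_PX$ has an ordinary node at $P$, so $D$ is a nodal cubic, a conic plus a secant line, or a triangle of lines. Your extra removal of the polar loci of the singular points is a harmless, cheap way to avoid checking log canonicity where $D$ meets a du Val point, which the paper leaves implicit. The further removal of a ``reducible with non-transversal intersection'' locus, and the dual-map dimension count used to justify it, are unnecessary: a conic plus a tangent line, three concurrent lines, or a non-reduced cubic has no ordinary node, so that locus is empty in $X\setminus\operatorname{He}(X)$.
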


\begin{proof}
Put $U = X \setminus \operatorname{He}(X)$. Then any point of $U$ is smooth on $X$ by \cite[Proposition 1.1.19]{Dol12}. Moreover, any point of $U$ is not parabolic since all parabolic points are contained in $\operatorname{He}(X)$ by \cite[Theorem 1.1.20]{Dol12}. Therefore for any $P \in U$ and $D = X \cap T_P X$ the pair $(X, D)$ is log canonical and the dual complex $\mathcal{D}(X, D)$ is one-dimensional.
\end{proof}

Now we can prove the main result of this section.

\begin{theorem}
\label{thrm-main3itd}
Let $\mathbb{K}$ be a field of characteristic $0$, and let $X$ be a del Pezzo surface of degree $3$ or~greater with at worst du Val singularities defined over $\mathbb{K}$, such that $X(\mathbb{K})\neq\varnothing$. Then
$$
\coreg(X) = \mathrm{coreg}_1(X)=0.
$$
\end{theorem}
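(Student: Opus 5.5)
First the case $d=(-K_X)^2\geqslant 4$ is reduced to the cubic case. Pick a $\mathbb{K}$-point $Q\in X(\mathbb{K})$. We can move to another $\mathbb{K}$-point that is smooth and lies on no negative curve of the minimal resolution $\pi\colon Z\to X$. When $X(\mathbb{K})\neq\varnothing$, a du Val del Pezzo surface of degree $\geqslant 3$ is $\mathbb{K}$-unirational, and for $d\geqslant 5$ it is even $\mathbb{K}$-rational. So $\mathbb{K}$-points are Zariski dense, provided $\mathbb{K}$ is infinite, which holds in characteristic $0$. We therefore choose $P\in X(\mathbb{K})$ outside the finite union of $(-1)$- and $(-2)$-curves on $Z$ and outside $\mathrm{Sing}(X)$. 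By Lemma~\ref{lem-blowup}, the blow-up $W\to Z$ at $P$ is a weak del Pezzo surface of degree $d-1$ defined over $\mathbb{K}$. Its anticanonical model $X_{d-1}$ is a du Val del Pezzo surface over $\mathbb{K}$ which still has $\mathbb{K}$-points, for instance the points of the exceptional curve. By Lemmas~\ref{lem-singweak} and~\ref{lem-coregpush} we get $\coreg_1(X)=\coreg_1(Z)\leqslant\coreg_1(W)=\coreg_1(X_{d-1})$. Iterating this $d-3$ times reduces the statement to a cubic surface with a $\mathbb{K}$-point. The same chain of inequalities holds for $\coreg$, and $\coreg\leqslant\coreg_1$ always. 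Hence it suffices to show $\coreg_1=0$ for cubics.

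For a cubic surface $X\subset\mathbb{P}^3$ over $\mathbb{K}$ with $X(\mathbb{K})\neq\varnothing$, we use Lemma~\ref{lem-dens3itd}. The open set $U=X\setminus\operatorname{He}(X)$ is defined over $\mathbb{K}$, since the Hessian is defined by a polynomial with $\mathbb{K}$-coefficients. It is nonempty over $\overline{\mathbb{K}}$, as the lemma asserts. If $X(\mathbb{K})$ is Zariski dense, pick $P\in U(\mathbb{K})$. Then the tangent plane $T_PX$ is defined over $\mathbb{K}$, so $D=X\cap T_PX\in|-K_X|$ is a $\mathbb{K}$-divisor. By Lemma~\ref{lem-dens3itd}, $(X,D)$ is log canonical with one-dimensional dual complex. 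This gives $\coreg_1(X)=0$ in the Galois-invariant sense of Remark~\ref{rem-non-closed-field}, and hence $\coreg(X)=0$.

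The main obstacle is justifying that $X(\mathbb{K})$ is Zariski dense, i.e.\ that a $\mathbb{K}$-point can be taken in $U$ and off the negative curves, starting from an arbitrary $\mathbb{K}$-point that may be singular or lie on a line. I would argue as follows. If $X$ has a $\mathbb{K}$-point $Q$ that is smooth and not on a line, then the classical Segre–Manin argument makes $X$ $\mathbb{K}$-unirational. Concretely, $T_QX\cap X$ is a $\mathbb{K}$-cubic curve singular at $Q$, and it is rational with many $\mathbb{K}$-points when $Q$ is non-parabolic. Iterating tangent-line constructions then produces a dense set of $\mathbb{K}$-points. If the only available points are singular points of $X$, projection from a $\mathbb{K}$-rational singular point is birational onto $\mathbb{P}^2$, so $X$ is $\mathbb{K}$-rational. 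If the point lies on a line, the line itself is defined over $\mathbb{K}$ whenever it is the unique line through that point. In that case it carries infinitely many $\mathbb{K}$-points, and we move to a point not on any other line. A point on a $\mathbb{K}$-line is smooth off finitely many points; the parabolic points on a line are finite unless the whole line is parabolic. In the latter situation I would use the conic bundle given by the planes through the line: its general fibers are conics with $\mathbb{K}$-points obtained by intersecting with the line, which again yields density. Handling these subcases carefully, especially parabolic lines and singular cubics, is where I expect the work to lie.
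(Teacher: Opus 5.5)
Your argument follows the paper's proof: density of $X(\mathbb{K})$ from $\mathbb{K}$-unirationality, then reduction of degree $d\geqslant 4$ to a cubic by repeatedly blowing up general $\mathbb{K}$-points off the $(-2)$-curves (Lemmas~\ref{lem-blowup}, \ref{lem-singweak}, \ref{lem-coregpush}), and finally the tangent plane section at a $\mathbb{K}$-point of $X\setminus\operatorname{He}(X)$ via Lemma~\ref{lem-dens3itd}. The density you flag as the main obstacle is exactly what the paper takes from Manin's unirationality theorems (Theorems 29.4 and 30.1 of \emph{Cubic Forms}), which you already invoke in your first paragraph, so the case analysis at the end is unnecessary --- and as written it is incomplete anyway (e.g.\ a smooth $\mathbb{K}$-point lying on three Galois-conjugate lines is not treated).
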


\begin{proof}
The surface $X$ is unirational by \cite[Theorems~29.4 and~30.1]{Man74}, since $X(\mathbb{K})\neq\varnothing$ and $\mathbb{K}$ is perfect and infinite. Therefore $X(\mathbb{K})$ is dense.

Assume that $(-K_X)^2 = 3$. Then for an open subset $U \subset X$ constructed in Lemma~\ref{lem-dens3itd} one has~${U(\mathbb{K}) \neq \varnothing}$ since $\mathbb{K}$ is infinite. Any $\mathbb{K}$-point $P$ in $U$ is smooth on $X$, and for~${D = X \cap T_P X}$ the pair~$(X, D)$ is log canonical and the dual complex $\mathcal{D}(X, D)$ is one-dimensional. Thus~${\coreg_1(X) = 0}$.

Now assume that $d = (-K_X)^2 > 3$. Consider a minimal resolution of singularities $Z \to X$, where~$Z$ is a weak del Pezzo surface. Let $R$ be a union of $(-2)$-curves on $X$. Then there exists \mbox{a $\mathbb{K}$-point $P$} in the open subset $U = X \setminus R$ since $X(\mathbb{K})$ is dense and $\mathbb{K}$ is infinite. One can blow up $Z$ at $P$ and obtain a weak del Pezzo surface of degree $d-1$ by Lemma~\ref{lem-blowup}. By repeating this procedure $d - 3$ times we obtain a weak del Pezzo surface $W$ of degree $3$. The anticanonical linear system $|-K_W|$ maps $W$ onto a cubic surface $Y$ with at worst du Val singularities. We have proved that $\coreg_1(Y) = 0$. Therefore $\coreg_1(X) = 0$, since by Lemmas~\ref{lem-singweak} and~\ref{lem-coregpush} one has
$$
\coreg_1(X) = \coreg_1(Z) \leqslant \coreg_1(W) = \coreg_1(Y) = 0.
$$
The proof is completed. 
\end{proof}

Now we can prove Theorem~\ref{main-thrm-3}.

\begin{proof}[{Proof of Theorem \ref{main-thrm-3}}]
Let $\mathbb{K}$ be a field of characteristic $0$, and let $X$ be a del Pezzo surface of~degree~$d$ with at worst du Val singularities defined over $\mathbb{K}$. If $d \geqslant 6$ then $\coreg_1(X) = 0$ by Proposition~\ref{prop-dp6789}.

If $d = 5$ then it is well-known that $X(\mathbb{K}) \neq \varnothing$. Therefore $\coreg_1(X) =0$ by Theorem~\ref{thrm-main3itd}. Moreover, if $d \in \{3, 4\}$ and $X(\mathbb{K}) \neq \varnothing$ then $\coreg_1(X) =0$ by Theorem~\ref{thrm-main3itd}. The proof is completed.
\end{proof}

In the following examples we show that the conditions of Theorem~\ref{thrm-main3itd} cannot be weakened. At~first we show that the claim of Theorem~\ref{thrm-main3itd} does not hold for pointless cubic surfaces.

\begin{example}
\label{ex-vikulova-2}
Let $\mathbb{K}$ be a field of characteristic $0$ containing a non-trivial root of unity of degree $3$. Assume that there exists a non-trivial Severi--Brauer surface $V$ over $\mathbb{K}$. Then by \cite[Lemma~3.4]{Vik24a} the~group $(\mathbb{Z} / 3\mathbb{Z})^2$ generated by elements $b$ and $c$ acts on $V$ by biregular automorphisms. Moreover, by \cite[Lemma 3.10]{Vik24a} the fixed points $p_1$, $p_2$, $p_3$ of $b$, and the fixed points $p_4$, $p_5$, $p_6$ of $c$ are in~general position on $\overline{V} = V \times_{\mathbb{K}} \overline{\mathbb{K}} \cong \mathbb{P}^2_{\overline{\mathbb{K}}}$.

Let $\mathbb{L}$ be a minimal extension of $\mathbb{K}$ such that all points $p_i$ are defined over $\mathbb{L}$. We claim that~${\operatorname{Gal}(\mathbb{L} / \mathbb{K}) \cong (\mathbb{Z} / 3\mathbb{Z})^2}$. Indeed, a point $p_1$ is defined over an extension $\mathbb{F} / \mathbb{K}$ of degree $3$. Therefore the points $p_2 = c(p_1)$ and $p_3 = c^2(p_1)$ are defined over $\mathbb{F}$. Similarly, each of the points $p_4$, $p_5$ and~$p_6$ is defined over an extension $\mathbb{F}' / \mathbb{K}$ of degree $3$.

Assume that $\mathbb{F} = \mathbb{F'}$. Then the line $L$ passing through the points $p_1$ and $p_4$, and the two lines $\operatorname{Gal}(\mathbb{F} / \mathbb{K})$-conjugate to $L$ pass through one point $P$ (one can check it by considering the regular action of $(\mathbb{Z} / 3\mathbb{Z})^2$ on $\mathbb{P}^2_{\overline{\mathbb{K}}} \cong \overline{V}$). Therefore the point $P$ is defined over $\mathbb{K}$ but $V(\mathbb{K}) = \varnothing$. From this contradiction it follows that $\mathbb{F} \neq \mathbb{F'}$, and all points $p_i$ are defined over an extension $\mathbb{L} / \mathbb{K}$ of degree~$9$.

Let $X \to V$ be the blow up of the six points $p_i$. Then $X$ is a pointless cubic surface such that all lines on $\overline{X}$ are defined over $\mathbb{L}$. Therefore any section of $X$ by a plane $H$ is either a smooth cubic curve, or a triple of conjugate lines, since other singular cubic curves always has a $\mathbb{K}$-point. But each of six $\operatorname{Gal}(\mathbb{L} / \mathbb{K})$-invariant triples of lines on $X_{\mathbb{L}}$ does not lie in one plane. Therefore any plane section of $X$ is a smooth cubic curve, and $\coreg_1(X) = 1$.
\end{example}

Now we consider fields of positive characteristic.

\begin{example}[{cf. \cite{Vik24b}}]
\label{ex-vikulova}
Consider a cubic del Pezzo surface $X$ defined over the finite field $\mathbb{F}_2$ given by the equation:
\[
x^2t + y^2z + yz^2 + xt^2 = 0.
\]
This surface is remarkable since it has $45$ Eckardt points defined over $\overline{\mathbb{F}}_2$ (in fact, over $\mathbb{F}_4$), which is the maximal possible number of such points, 
and $15$ Eckardt points defined over $\mathbb{F}_2$, which is again the maximal possible value.

We claim that $\mathrm{coreg}_1(X)=2$.
Indeed, there are exactly $15$ planes in $\mathbb{P}^3_{\mathbb{F}_2}$. Therefore for any plane~$H$ the intersection $D = X \cap H$ is a triple of lines passing through one Eckardt point, and the pair $(X, D)$ is not log canonical.

Moreover, over the field $\overline{\mathbb{F}}_2$ the surface $\overline{X} = X \times_{\mathbb{F}_2} \overline{\mathbb{F}}_2$ is isomorphic to the Fermat cubic
$$
x^3 + y^3 + z^3 + t^3 = 0.
$$
One can directly check that any section of $\overline{X}$ is either a smooth cubic curve, or a cuspidal cubic curve, or a union of an irreducible conic and a line tangent to this conic, or a triple of lines passing through one point. Therefore $\coreg_1(\overline{X}) = 1$.
\end{example}

Finally, let us look on del Pezzo surfaces of degree $2$.

\begin{example}
\label{ex-dp2-1}
Consider a del Pezzo surface $X$ of degree $2$ defined over $\mathbb{R}$ given by the equation
\[
x^4+y^4+z^4-w^2=0
\]
in $\mathbb{P}(1,1,1,2)$. Let $R\subset \mathbb{P}^2$ be the ramification curve of the double cover $X\to \mathbb{P}^2$. Note that~${X(\mathbb{R}) \neq \varnothing}$, but $R(\mathbb{R}) = \varnothing$.

We claim that $\mathrm{coreg}_1(X)=1$. Indeed, the elements $C\in|-K_X|$ are either irreducible, or~have two irreducible components (this follows from $C\cdot(-K_X)=2$). 
An element $C\in |-K_X|$ of coregularity $0$ is either a nodal irreducible curve, or a pair $(-1)$-curves which intersect transversally at two points. This would give us either an $\mathbb{R}$-point on $R$, or a pair of Galois-conjugate points on $R$. The latter case is excluded by a direct computation. Therefore $\coreg_1(X) = 1$. 
\end{example}

\begin{example}
\label{ex-dp2-2}
We can modify Example~\ref{ex-dp2-1}, and avoid the condition $R(\mathbb{K}) = \varnothing$.
Consider a del Pezzo surface $X$ defined over $\mathbb{Q}$ given by the equation
\[
x^4+y^4-z^4-w^2=0
\]
in $\mathbb{P}(1,1,1,2)$. Then $X(\mathbb{Q}) \neq \varnothing$, and $R(\mathbb{Q}) \neq \varnothing$. But there are only four $\mathbb{Q}$-points on $R$, and~tangent lines to these points correspond to pairs of tangent $(-1)$-curves on $X$. Moreover, by a direct computation one can check that $\coreg_1(X) = 1$.
\end{example}

\begin{remark}
In Example~\ref{ex-dp2-2} the curve $R$ contains only finite number of $\mathbb{K}$-points. Note that if~$R(\mathbb{K})$ is infinite and $R$ is smooth then one can argue as in the proof of Lemma~\ref{lem-dp2} and show that~${\coreg_1(X) = 0}$.
\end{remark}

\begin{remark}
Note that the surfaces constructed in Examples \ref{ex-vikulova-2}, \ref{ex-vikulova}, \ref{ex-dp2-1} and \ref{ex-dp2-2} for which coregularity is equal to $1$ or $2$ are special and have some extremal properties. For example, all these surfaces have large group of automorphisms. In particular, the automorphism groups of the cubic surfaces given in Example \ref{ex-vikulova} are maximal among smooth cubic surfaces defined over $\mathbb{F}_2$ and $\overline{\mathbb{F}}_2$ respectively.
\end{remark}

\section{Questions}
\label{sec-examples-and-questions}
In this section we assume that all varieties are defined over an algebraically closed field of characteristic $0$, although one can propose the same questions in a more general setting. 
We start with the following question which is natural to ask in view of Theorem \ref{cor-toric-models}.
\begin{question}
\label{question-looijenga}
Let $X$ be a klt del Pezzo surface. Can the condition $\mathrm{coreg}(X)=0$ or the existence of a toric model be characterized in terms of isotriviality of the family of anti-canonical curves on~$X$ (possibly, up to a bounded family of exceptions)? 
\end{question}

Another possible generalization of our results is to the case of higher dimensions. 
We start with the following question in this direction. 
\begin{question}
\label{question-isotrivial-fano}
Is there a smooth or mildly singular Fano threefold $X$ such that all smooth elements in $|-K_X|$ are isomorphic? What is $\mathrm{coreg}(X)$?
\end{question}

By \cite[Proposition 5.4]{Beau02}, the answer to Question \ref{question-isotrivial-fano} is negative in the case when $X$ is~smooth and $-K_X$ is very ample. For a general smooth Fano threefold such that $-K_X$ is not very ample, the~negative answer to Question \ref{question-isotrivial-fano} can be deduced from \cite{ALP24}, see e.g. the proof of \cite[Lemma~8.2]{ALP24} in the case of a general prime Fano threefold of degree $2$. We expect that the answer to Question \ref{question-isotrivial-fano} is negative for smooth Fano threefolds.  
On the other hand, it is easy to present an~example of a smooth Fano threefold which is birational to an isotrivial elliptic fibration.   
\begin{example}
Consider a smooth three-dimensional Veronese double cone $X$, that is a smooth Fano variety given by the equation of degree $6$ in $\mathbb{P}(1,1,1,2,3)$. In \cite[Lemma 5.7]{ALP24} it is shown that a general such $X$ has coregularity $0$. 
Note that $X$ admits a map $\phi\colon X\dashrightarrow \mathbb{P}^2$ given by the linear system $|-\frac{1}{2}K_X|$ whose general fiber is an elliptic curve. 
The map $\phi$ becomes a morphism after resolving the unique base-point of the linear system $|-\frac{1}{2}K_X|$. 
All smooth fibers of $\phi$ are isomorphic if and only if the equation of $X$ has the form:
\[
w^2=z^3+f(x_1,x_2,x_3),
\]
where $f$ is a homogeneous polynomial of degree $6$. In this case, the argument given in the proof of \cite[Lemma 5.7]{ALP24} does not apply, so we only have the bound $\mathrm{coreg}(X)\leqslant 1$. Indeed, if we pick two general elements $D_1,D_2\in|-\frac{1}{2}K_X|$, then the dual complex of $(X,D=D_1+D_2)$ is a~line segment. 
Note that a general $X$ is non-rational. It would be interesting to compute $\mathrm{coreg}(X)$ in~this case.
\end{example}

Note that according to \cite{ALP24}, the condition $\mathrm{reg}(X)=0$ can hold only for Fano threefolds with Picard rank $1$, or for the elements of the deformation families 2.1 and 10.1 in the notation of \cite{IP99}.

\begin{question}
Can Theorem \ref{cor-toric-models} be generalized to the three-dimensional case? In other words, can one relate the following  properties of a smooth or mildly singular rational Fano threefold: the~non-existence of a toric model; the condition that the coregularity is positive; the condition that if~$X$ is birational to an elliptic fibration $f\colon Y\to S$ then $f$ is isotrivial. 
\end{question}

The question when a Fano threefold is birational to an elliptic fibration is discussed in \cite{Ch07}.

The general principle is that the larger coregularity is, the more ``exceptional'' is the variety under consideration. In particular, such ``exceptional'' could have large automorphism groups. This observation is confirmed in several explicit examples, see Examples \ref{ex-vikulova-2}, \ref{ex-vikulova}, \ref{ex-dp2-1} and \ref{ex-dp2-2}. 
We~formulate the following question. 

\begin{question}
Assume that $X$ is a smooth del Pezzo surface defined over a field $\mathbb{K}$. Assume that $\mathrm{coreg}_1(X)=2$. Is it true that the automorphism group of $X$ is maximal among del Pezzo surfaces of given degree defined over $\mathbb{K}$?
\end{question}

We used the definition of coregularity which works for the varieties defined over a (not necessarily algebraically closed) field of characteristic $0$, see Definition \ref{defin-regularity} and Remark \ref{rem-non-closed-field}. It would be interesting to understand whether the same definition works in the case of a field of positive characteristic, see the discussion \cite[4.10]{Mo24}.  
We formulate a question regarding the coregularity in positive characteristic.

\begin{question}
\label{question-positive-char}
Is the notion of coregularity well defined in positive characteristic? 
Does Theorem~\ref{thm-1-2-complement} hold in positive characteristic?
\end{question}

In Section \ref{sec-dp-over-c} we used the theory of elliptic fibrations to compute the coregularity of du Val del Pezzo surfaces defined over $\mathbb{C}$. 
We expect that this approach 
could also work for du Val del Pezzo surfaces defined over a field of positive characteristic. Namely, the study of (quasi-)elliptic fibrations associated with a du Val del Pezzo surfaces was done in \cite{St21}. 
However, the explicit equations become more involved in positive characteristic. Hence, we formulate the following problem.

\begin{problem}
Сompute coregularity of du Val del Pezzo surfaces defined over a field of positive characteristic.
\end{problem}

\end{document}